\tikzstyle{wbullet}=[circle, draw=black, fill=white, thick, inner sep=0pt, minimum size=1.5mm]
\tikzstyle{bbullet}=[circle, draw=black, fill=black, inner sep=0pt, minimum size=1.5mm]
\tikzstyle{cross}=[circle, draw=black, fill=white, inner sep=0pt, minimum size=1.5mm]
\newtheorem{thm}{Theorem}[section]
\newtheorem{lem}[thm]{Lemma}
\newtheorem{lem-defn}[thm]{Lemma-Definition}
\newtheorem{prop}[thm]{Proposition}
\newtheorem{cor}[thm]{Corollary}
\theoremstyle{definition}
\newtheorem{claim}[thm]{Claim}
\newtheorem{defn}[thm]{Definition}
\newtheorem{nota}[thm]{Notation}
\newtheorem{rmk}[thm]{Remark}
\newtheorem{ex}[thm]{Example}
\theoremstyle{remark}
\numberwithin{equation}{section}
\newcommand{\sC}{\mathcal C}
\newcommand{\C}{\mathbb{C}}
\newcommand{\CC}{\mathbb{C}}
\newcommand{\FF}{\mathbb{F}}
\newcommand{\KK}{\mathbb{K}}
\newcommand{\NN}{\mathbb{N}}
\newcommand{\PP}{\mathbb{P}}
\newcommand{\QQ}{\mathbb{Q}}
\newcommand{\R}{\mathbb{R}}
\newcommand{\RR}{\mathbb{R}}
\newcommand{\ZZ}{\mathbb{Z}}
\newcommand{\alg}{\mathrm{alg}}
\newcommand{\bbk}{\mathbbm{k}}
\newcommand{\rmb}{\mathrm{b}}
\newcommand{\cha}{\mathrm{char~}}
\newcommand{\Diff}{\mathrm{Diff}}
\newcommand{\Exc}{\mathrm{Exc}}
\newcommand{\I}{\mathrm{I}}
\newcommand{\II}{\mathrm{II}}
\newcommand{\III}{\mathrm{III}}
\newcommand{\IV}{\mathrm{IV}}
\newcommand{\im}{\mathrm{Im}}
\newcommand{\ns}{\mathrm{ns}}
\newcommand{\s}{\mathrm{s}}
\newcommand{\Aut}{\mathrm{Aut}}
\newcommand{\mult}{\mathrm{mult}}
\newcommand{\res}{\mathrm{res}}
\newcommand{\supp}{\mathrm{supp}}
\newcommand{\vol}{\mathrm{vol}}
\newcommand{\sE}{\mathcal{E}}
\newcommand{\sF}{\mathcal{F}}
\newcommand{\sG}{\mathcal{G}}
\newcommand{\sO}{\mathcal{O}}
\newcommand{\sS}{\mathcal{S}}
\newcommand{\disp}{\displaystyle}
\begin{document}
\title[Minimal volume of log surfaces]{The minimal volume of log canonical surfaces of general type with positive geometric genus}
\author{Wenfei Liu}
\address{School of  Mathematical Sciences, Xiamen University, Xiamen, Fujian 361005, P.~R.~China}
\email{wliu@xmu.edu.cn}
\thanks{}
\subjclass[2010]{Primary:~14J29; Secondary:~14E30}
\date{\today}
\dedicatory{}
\keywords{log surfaces of general type, minimal volume, geography, Noether inequality}
\begin{abstract}
We determine the minimal possible volume of a projective log canonical surface of general type with prescribed positive geometric genus. As applications, we provide effecitive Noether type inequalities for log canonical threefolds and stable surfaces. Also, we obtain a uniform bound on the order of the symplectic automorphism group $\Aut_s(S)$ of smooth projective surfaces $S$ of general type.
\end{abstract}
\maketitle

\tableofcontents

\section{Introduction}
Let $(X, B)$ be a projective log canonical surface defined over an algebraically closed field $\bbk$. The volume $\vol(K_X+B)$ measures the asymptotic growth of the pluri-canonical linear system: 
\[
h^0(X, \lfloor m(K_X+B)\rfloor) = \frac{\vol(K_X+B)}{2} m^2 + \mathrm{o}(m^2).
\]
One says that $K_X+B$ is big if $\vol(K_X+B)>0$; if this is the case, $(X, B)$ is said to be of general type. 

Let us take a subset $\sC\subset (0,1]$, and consider the set $\sS(\sC)$ of projective log canonical surfaces $(X, B)$ of general type such that the coefficients of $B$ lie in $\sC$. (When $\sC=\emptyset$, we understand that $B$ is the zero divisor and $(X, 0)$ is just a projective log canonical surface \emph{without boundary}.) Set
\[
\KK^2(\sC) := \{\vol(K_X+B) \mid (X, B)\in\sS(\sC)\}
\]
It is a fundamental result of Alexeev \cite{Ale94} that, if $\sC$ satisfies the descending chain condition (DCC), then so does $\KK^2(\sC)$.\footnote[2]{See \cite{HK19} for a new treatment.}  In particular, for a DCC set $\sC\subset(0,1]$, any subset of $\KK^2(\sC)$ attains the minimum. 

It is thus interesting to find the minima of the volumes of naturally appearing classes of log canonical surfaces. For a fixed subset $\sC\subset (0,1]$ and a nonnegative integer $p_g$, we can define the following subsets of $\sS(\sC)$ and $\KK^2(\sC)$ respecitvely:
\[
\sS(\sC, p_g) :=\{(X, B)\in\sS(\sC)\mid p_g(X, B)=p_g\}
\]
where $p_g(X, B) :=h^0(X, K_X+\lfloor B \rfloor) $ is the \emph{geometric genus} of $(X, B)$, and
\[
\KK^2(\sC, p_g) :=\{\vol(K_X+B) \mid (X, B)\in \sS(\sC, p_g)\}.
\]
Obviously, we have $\sS(\sC) = \bigcup_{p_g\geq 0}\sS(\sC, p_g)$ and $\KK^2(\sC) = \bigcup_{p_g\geq 0}\KK^2(\sC, p_g)$. 

The aim of this paper is to find the minimum of $\KK^2(\sC, p_g)$ when $p_g>0$ and when the coefficient set $\sC$ satisfies the DCC. Actually, as it turns out, the minimum of $\KK^2(\sC, p_g) $ for $p_g>0$ can be attained under a weaker condition on $\sC$ than the DCC:
\begin{thm}\label{thm: main}
Let $\sC\subset (0,1]$ be a subset such that $\sC\,\cup\,\{1\}$ attains the minimum, say $c$.  Let $p_g$ be a positive integer. Then the following holds.
\begin{enumerate}[leftmargin=*]
\item The set $\KK^2(\sC, p_g) $ attains its minimum, and we have
\[
\min \KK^2(\sC, p_g) = \min \KK^2(\{c\}_{<1}, p_g). 
\]
\item If $p_g\geq 2$, then
\[
\min\KK^2(\sC, p_g)=
\begin{cases}  
(2c-c^2)(p_g-1)-2c^2 & \text{ if } c < \frac{p_g-1}{p_g+1}\\
 p_g-3 + \frac{4}{p_g+1} & \text{ if }  c\geq \frac{p_g-1}{p_g+1} 
\end{cases}  
\]
\item If $p_g = 1$, then
\[
\min\KK^2(\sC, 1)=
\begin{cases}
\frac{1}{42}c^2, & \text{ if } c\leq \frac{7}{13} \\
-\frac{11}{6} c^2 +2c -\frac{7}{13}, & \text{ if }  \frac{7}{13}< c\leq\frac{6}{11} \\
\frac{1}{143}, & \text{ if }  c>\frac{6}{11}.
\end{cases}
\] 
\end{enumerate}
\end{thm}
For a fixed $p_g>0$, $\min\KK^2(\sC, p_g)$ is a piecewise smooth, continuous function of $c = \min(\sC\cup\{1\})$. This lies in the fact that we can take the same smooth projective surface $W$ and reduced divisors $B_W'$ and $D$ so that, setting $B_W^{(c)} = B_W'+cD$, we have $p_g(W, B_W^{(c)} )=p_g$ and $\vol(K_W+B_W^{(c)} ) = \min\KK^2(\sC, p_g)$; see Section~\ref{sec: ex}.

For the following frequently used sets of coefficients
\begin{equation}\label{eq: coeff sets}
\sC_0=\emptyset,\, \sC=\{1\},\,\sC_2=\left\{1-\frac{1}{n} \biggm| n\in\NN_{\geq 2}\right\}\cup\{1\},
\end{equation}
we have $\min (\sC_0\cup\{1\}) = \min (\sC_1\cup\{1\})=1$, and $\min (\sC_2\cup\{1\})=\frac{1}{2}$, and Theorem~\ref{thm: main} (iii) gives
\begin{equation}
\min\KK^2(\sC_0, 1) = \min\KK^2(\sC_1, 1) = \frac{1}{143},\quad \min\KK^2(\sC_2, 1) = \frac{1}{168}
\end{equation}
Therefore, if $(X, B)\in \sS(\sC_1)$ has $\vol(K_X+B)<\frac{1}{143}$ then $p_g(X, B)=0$ and, in fact, $X$ should be a rational surface (Corollary~\ref{cor: <1/143}).

Theorem~\ref{thm: main} can be reformulated without involving the coefficient set $\sC$:
\begin{thm}\label{thm: main2}
Let $p_g$ be a positive integer and $c\in (0,1]$. Let $(X, B)$ be any projective log canonical surface $p_g(X, B) = p_g$ and $\min(\sC_B\cup\{1\})= c$, where $\sC_B$ denotes the set of nonzero coeffcients of $B$.
\begin{enumerate}
\item If $p_g\geq 2$, then
\[
\vol(K_X+B)\geq
\begin{cases}  
(2c-c^2)(p_g-1)-2c^2 & \text{ if } c < \frac{p_g-1}{p_g+1}\\
 p_g-3 + \frac{4}{p_g+1} & \text{ if }  c\geq  \frac{p_g-1}{p_g+1} 
\end{cases}  
\]
\item If $p_g = 1$, then
\[
\vol(K_X+B)\geq
\begin{cases}
\frac{1}{42}c^2, & \text{ if } c\leq \frac{7}{13} \\
-\frac{11}{6} c^2 +2c -\frac{7}{13}, & \text{ if }  \frac{7}{13}<c\leq\frac{6}{11} \\
\frac{1}{143}, & \text{ if }  c>\frac{6}{11}.
\end{cases}
\] 
\end{enumerate}
Moreover, the inequalities are optimal in the sense that the equalities can be realized by some $(X, B) \in \sS(\{c\}_{<1}, p_g)$.
\end{thm}

When $\sC\subset\{1\}$, Theorem~\ref{thm: main2} recovers the inequality obtained in \cite{TZ92}:
\begin{equation}\label{eq: tz}\vol(K_X+B)\geq p_g(X, B) - 3+\frac{4}{p_g(X, B)+1}.\end{equation} 
Note that \eqref{eq: tz} is trivially true if $p_g(X,B)=1$, since its right hand side is negative in this case. The upshot of Theorem~\ref{thm: main2} is to provide the optimal lower bound also in this case, at the same time allowing  the boundary divisors to have any coefficients in $(0,1]$.

As a consequence of Theorem~\ref{thm: main} or \ref{thm: main2}, we can bound the log canonical volume from below by a linear function of the geometric genus.
\begin{thm}\label{thm: linear lower bound}
For any projective log canonical surface $(X, B)$ of general type, we have
\begin{equation}
\vol(K_X+B)\geq (2c-c^2)p_g(X, B)-(2c+c^2) 
\end{equation}
where $c:=\min(\sC_B\cup\{1\})$. Moreover, the inequality is optimal in the following sense:
\begin{enumerate}[leftmargin=*]
\item  if  $c<1$, then the equality can be attained for  $(X, B)$ with  $p_g(X, B)\geq \frac{1+c}{1-c}$. 
\item  if $c=1$ then the inequality is strict, but there is a sequence of projective log canonical surfaces  $X_i$ of general type such that
\[
\lim_{i\to \infty}\vol(K_{X_i}) - p_g(X_i) + 3 =0
\]
\end{enumerate}
\end{thm}

The paper is organized as follows: In Section~\ref{sec: pre} we recall some preliminaries on the birational geometry of surfaces, such as log canonical singularities, the Zariski decomposition, and the volume of $\RR$-divisors. Then we set out to find the minimal possible volume of a projective log canonical surface of general type with prescribed (positive) geometric genus. It can roughly be divided into the following parts:
\begin{enumerate}[leftmargin=*]
\item In Section~\ref{sec: fixed surf} we describe how to find the minimal volume of higher models over a fixed log surface $(Z, B_Z)$; see Lemma~\ref{lem: min fix surf}. This is then explicitly carried out in the case when the log surface contains an extended canonical type curve; see Section~\ref{sec: can curve}. 
\item In Section~\ref{sec: ss}, we introduce the semistable decomposition $B = B^\s + B^\ns$  for a projective log canonical surface $(X, B)$, and study its behavior under blow-ups. We can then divide the set $\sS(\sC, p_g)$ into several subsets $\sS(\sC, p_g; \kappa)$ according to the Kodaira dimension $\kappa$ of $(\tilde X, B^\s_{\tilde X})$, where $(\tilde X, B_{\tilde X})\rightarrow (X, B)$ is the minimal resolution.
\item In Section~\ref{sec: nec cond} we find out several necessary conditions that a log surface $(X, B)\in\sS(\sC, p_g; \kappa)$ achieving the minimal volume should satisfy; see Proposition~\ref{prop: nec cond}. One of them is that the semistable part $B^\s$ and the non-semistable part $B^\ns$ are disjoint; this simplifies the discussion considerably. 
\item In Section~\ref{sec: lower bound} we use the necessary conditions found in Section~\ref{sec: nec cond} to give lower bounds on the volumes of log surfaces in $\sS(\sC, p_g; \kappa)$ when $\kappa\geq 0$. These lower bounds turn out to be optimal by the examples provided in Section~\ref{sec: ex}, which in turn depends on the explicit computation in Section~\ref{sec: fixed surf}. This finishes the search for the minimal volume of log surfaces in $\sS(\sC, p_g; \kappa)$ with $p_g>0$ and hence also of $\sS(\sC, p_g)$ with $p_g>0$.
\end{enumerate}
In the final Section~\ref{sec: app}, the main results are applied to give explicit Noether type inequalities for log canonical threefolds of general type and stable surfaces, and to bound the symplectic automorphisms of surfaces of general type.

\medskip

\noindent{\bf Notation and Conventions.}  
\begin{itemize}[leftmargin=*]
\item We work over an algebraically closed field $\bbk$ of arbitrary characteristic from Section~\ref{sec: pre} to Section~\ref{sec: ex}, unless otherwise stated. For the applications in Section~\ref{sec: app} we restrict to characteristic $0$.
\item  Given a birational projective morphism $f\colon X\rightarrow Y$ between two normal varieties, we always choose the canonical divisors $K_X$ and $K_Y$ in such a way that $K_Y=f_* K_X$, so if the dimension is two or $K_Y$ is $\QQ$-Cartier then $K_X-f^*K_Y$ is a well-defined $\QQ$-divisor supported on the exceptional locus $\Exc(f)$.
\item For a coherent sheaf $\sF$ on a normal projective variety $X$, we denote $h^i(X, \sF) = \dim_\bbk H^i(X, \sF)$ and $\chi(\sF) = \sum_i (-1)^i h^i(X, \sF)$, the Euler characteristic of $\sF$. The numbers $q(X):=h^1(X, \sO_X)$ and $p_g(X):=h^0(X, \sO_X(K_X))$ are called the \emph{irregularity} and the \emph{geometric genus} of $X$ respectively. For a Weil divisor $D$ on $X$, we usually write $H^i(X, D)$ instead of $H^i(X, \sO_X(D))$.
\item On a smooth projective surface, a $(-n)$-curve means a smooth rational curve $C$ with $C^2=-n$.
\item For a subset $\sC\subset \RR$ of real numbers and $a\in \RR$, we define the subset $\sC_{\leq a}:=\{x\in \sC \mid x\leq a\}$. The subsets $\sC_{<a}$, $\sC_{\geq a}$ and $\sC_{> a}$ are similarly defined.
\end{itemize}

\medskip

\noindent{\bf Acknowledgement.} The work has been supported by NSFC (No.~11501012 and No.~11971399) and by the Presidential Research Fund of Xiamen University (No.~20720210006). Thanks go to Valery Alexeev, S\"onke Rollenske, Stephen Coughlan and Thomas Bauer for helpful discussions which clarify and improve my arguments. The initial results of this paper were obtained during my trips to Universit\"at Bayreuth, Philipps-Universit\"at Marburg and the University of Georgia in 2016. The author enjoyed the hospitality and the inspiring academic atmosphere in these institutions. Over the years leading to this version of the paper, I benefited from discussions on related problems with Chen Jiang, Zhi Jiang, Jingjun Han and Meng Chen at Fudan University; I am also grateful to Yong Hu and Tong Zhang for their interest; a stimulating question of Hang Zhao leads to Theorem~\ref{thm: 3 Noether}.

\section{Preliminaries}\label{sec: pre}
We recall the notions of birational geometry in dimension two that are needed in this paper; most of them have a generalization to all dimensions (\cite{KM98, Fuj17}).

\subsection{Divisors}
For $\FF=\ZZ, \QQ$ or $\RR$, an $\FF$-divisor on a normal surface $X$ is a formal finite $\FF$-linear combination $B=\sum_{j\in J} b_j B_j$, where $B_j$ are distinct prime divisors and $J$ is a finite set. We understand that $J=\emptyset$ if $B=0$ is the zero divisor. Denote by $\sC_B=\{b_j \}_{j\in J} \setminus\{0\}$ the set of (nonzero) coefficients of $B$, so that $B=0$ if and only if $\sC_B=\emptyset$. The support of $B$ is defined to be $\disp\supp(B):=\bigcup_{b_j\neq 0}\supp(B_j)$.

An $\FF$-divisor is said to be $\FF$-Cartier if it is a finite $\FF$-linear combination of Cartier divisors. Two $\FF$-divisors $B$ and $B'$ are said to be \emph{$\FF$-linearly equivalent}, denoted $B\sim_\FF B'$, if $B-B'$ is a finite $\FF$-linear combination of principle divisors.  When $\FF=\ZZ$, then a $\ZZ$-divisor (resp.~$\ZZ$-Cartier) is just a usual Weil (resp.~Cartier) divisor and $\ZZ$-linear equivalence is the usual linear equivalence, which is then denoted by $B\sim B'$. A non-zero effective $\ZZ$-divisor on a normal surface is usually called a \emph{curve}. Two $\FF$-divisors $B$ and $B'$ are said to be \emph{numerically equivalent} if $B\cdot C = D\cdot C$ for any curve $C$, and it is denoted by $B\equiv B'$.

For a real number $a$, we write
\[
B^{=a} = \sum_{b_j=a} b_j B_j, \hspace{.5cm} \lfloor B \rfloor =\sum \lfloor b_j\rfloor B_j,
\]
where $\lfloor b_j \rfloor$ denotes the integer satisfying the condition $b_j-1 <\lfloor b_j \rfloor \leq b_j$; the $\RR$-divisors  $B^{>a},\, B^{<a},$ and $\lceil B \rceil$ are similarly defined. For a prime divisor $P$ on $X$, we denote by $\mult_PB$ the coefficient of $P$ in $B$; for a point $p\in X$, the \emph{multiplicity} of $B$ at $p$ is $\mult_p B:=\sum_{j\in J} b_j\,\mult_p B_j$. 

\subsection{(Sub-)log surfaces and their singularities}
A \emph{sub-log surface} (resp.~\emph{log surface}) $(X, B)$ consists of a \emph{connected} normal surface $X$ and an $\RR$-divisor $B$ such that the coefficient set $\sC_B$ is contained in $\RR_{\leq 1}$ (resp.~$(0,1]$) and $K_X+B$ is $\RR$-Cartier.  We call a sub-log surface $(X, B)$ \emph{smooth} if $X$ is so. A \emph{higher model} of a sub-log surface $(X, B)$ consists of a triple $(Y, B_Y, \rho)$, where $(Y, B_Y)$ is a sub-log surface and  $\rho\colon Y\rightarrow X$  is a birational projective morphism such that $\rho_* B_{Y} = B$. The higher model is called
\begin{itemize}
\item \emph{effective} if $B_Y$ is so;
\item \emph{crepant} if $K_Y+B_Y=\rho^*(K_X+B)$;
\item \emph{a resolution} if $Y$ is smooth;
\item \emph{the minimal resolution} if it is a crepant resolution and the exceptional locus $\Exc(\rho)$ does not contain any $(-1)$-curves;
\item \emph{a log resolution} if it is a resolution and $\Exc(\rho)\cup \rho^{-1}_*B$ has a simple normal crossing support, where $\rho^{-1}_*B$ is the strict transform of $B$.
\end{itemize} 
For simplicity, we often omit the birational morphism $\rho$ and say that $(Y, B_Y)$ is a higher model of $(X,B)$.  

Let $(Y, B_Y, \rho)$ be a crepant log resolution of a sub-log surface $(X, B)$. For a prime divisor $E$ on $Y$, $a_E(X, B):=1-\mult_E B_Y$ is called the \emph{log discrepancy} of $E$ with respect to $(X, B)$. The sub-log surface $(X,B)$ is said to be \emph{sub-klt} (resp.~\emph{sub-log canonical})  if the coefficients of $B_Y$ are less than $1$  (resp.~at most $1$); when $B$ is effective, we drop the prefix "sub" everywhere. We refer to \cite{Ale92, Kol13} for a classification of log canonical surface singularities. A \emph{nonklt center} $Z\subset X$ of $(X, B)$ is the image of a stratum of the simple normal crossing divisor $B_Y^{=1}$; it is called \emph{accessible} if $\supp B_Y^{>0}$ is singular at some point of $\rho^{-1}(Z)$; see \cite[Definition~2.3]{AL19b}. It is easy to see that the accessibility of $Z$ does not depend on the choice of the crepant log resolution.

\subsection{The Iitaka--Kodaira dimension and the volume of divisors}

\begin{defn}
Let $X$ be a normal projective surface and $D$ an $\RR$-Cartier $\R$-divisor on $X$. The \emph{Iitaka--Kodaira dimension} $\kappa(D)$ of $D$ is defined as follows: $\kappa(D)$ is $-\infty$ if $h^0(X, \lfloor mD\rfloor) =0$ for every positive integer $m>0$; otherwise $\kappa(D)$ is defined to be the nonnegative integer $k$ satisfying
\[
0 < \limsup_{m\rightarrow\infty} \frac{h^0(X, \lfloor mD\rfloor )}{m^k} <\infty.
\]
It is clear that $\kappa(D)\in\{-\infty,0,1,2\}$. The $\RR$-divisor $D$ is said to be \emph{big} if $\kappa(D)=2$.  The \emph{volume} of $D$ is defined to be
\[
\vol(D)=\limsup_{m\rightarrow\infty} \frac{h^0(X, \lfloor mD\rfloor )}{m^2/2}
\]
Obviously, $\vol(D)>0$ if and only if $\kappa(D)=2$. We say a projective log canonical surface $(X, B)$ is \emph{of general type} if $K_X+B$ is big. The \emph{(log canonical) volume} of a log surface $(X, B)$ is defined to be $\vol(K_X+B)$.
\end{defn}

We refer to \cite{Laz04a} and \cite[Section 2.2]{HMX18} for the basic properties of the volume function. 

\subsection{The Zariski decomposition}
 The Zariski decomposition, defined as follows, is an essential tool to compute volumes.
\begin{defn}\label{defn: zariski}
Let $D$ be an $\RR$-Cartier $\R$-divisor on a normal projective surface. Then the \emph{Zariski decomposition} of $D$ is a decomposition $D= P+N$ such that
\begin{itemize}
\item[(i)] $P$ is a nef $\RR$-divisor, that is, $P$ is $\RR$-Cartier and $P\cdot C\geq 0$ for any curve $C$ on $X$;
\item[(ii)] $N$ is either zero or a nonzero effective $\RR$-divisor whose intersection matrix is negative definite;
\item[(iii)] $P\cdot N_i=0$ for each irreducible component $N_i$ of $N$.
\end{itemize} 
We call $P$ the \emph{positive part} of $D$, and $N$ the \emph{negative part}. 
\end{defn}

\begin{ex}
Let $(X, B)$ be a projective log canonical surface of general type. Then there is a birational morphism $\pi\colon (X, B)\rightarrow (\bar X, \bar B)$, such that $\bar B=\pi_* B$, $(\bar X, \bar B)$ is a projective log canonical surface with ample $K_X+B$ and $K_X+B-\pi^*(K_{\bar X}+\bar B)$ is an effective divisor supported on the exceptional locus $\Exc(\pi)$; see \cite{Fuj21}. We call $(\bar X, \bar B)$, together with the birational morphism $\pi$, the \emph{ample model} of $(X, B)$. Let $P:=\pi^*(K_{\bar X}+\bar B)$ and $N:=K_X+B-\pi^*(K_{\bar X}+\bar B)$. Then $K_X+B = P+N$ is the Zariski decomposition of $K_X+B$. 
\end{ex}

Recall that an $\RR$-Cartier $\RR$-divisor $D$ on a normal projective surface $X$ is \emph{pseudo-effective} if $D\cdot H\geq 0$ for any ample divisor $H$.

\begin{thm}
Let $D$ be an $\RR$-Cartier $\R$-divisor on a normal projective surface $X$. Then the following holds.
\begin{enumerate}
\item The $\RR$-divisor $D$ admits a Zariski decomposition if and only if $D$ is pseudo-effective.
\item The negative part $N$ in the Zariski decomposition depends only on the numerical class of $D$.
\item The positive part $P$ in the Zariski decomposition can be characterized as the maximal nef $\RR$-divisor such that $D-P$ is effective.
\end{enumerate} 
\end{thm}
\begin{proof}
When $X$ is smooth, the theorem is proved for effective $\QQ$-divisors $D$ in \cite{Zar61} and then generalized to pseudo-effective $\QQ$-divisors by \cite{Fujita79}; see also \cite[Theorem~14.14]{badescu01}. The proof for the general case is similar; see \cite[Theorem~2.2]{Pro03}.
\end{proof}

We omit the proof of the following easy lemma.
\begin{lem}\label{lem: vol ZD}
Let $D$ be a pseudo-effective $\R$-divisor on a normal projective surface $X$, and $D=P+N$ the Zariski decomposition with positive part $P$.
\begin{enumerate}[leftmargin=*]
\item $\vol(D) = \vol(P) = P^2 \geq D^2$, and the last inequality is an equality if and only if $D$ is nef.
\item If $D$ is big and if $E$ is an effective $\RR$-Cartier $\RR$-divisor such that $N-E$ is not effective, then $\vol(D)> \vol(D-E)$.
\item If $E$ is a pseudo-effective  $\RR$-Cartier $\RR$-divisor such that $D+E$ is big, then $D+tE$ is big for any $t>0$.
\end{enumerate}
\end{lem}

\subsection{The volumes of certain $\RR$-divisors depending on a parameter}
We compute the volume of an $\RR$-divisor of the form $D+tC$ on a normal projective surface, where $D$ is nef and $C$ is an irreducible curve.
\begin{lem}\label{lem: nef+curve}
Let $X$ be a normal projective surface. Suppose that $D$ is a nef $\RR$-divisor and $C$ an irreducible curve on $X$.  Define 
\[
t_0=\sup\{t\in \RR_{\geq 0}\mid D+t C \text{ is nef }\}\in \RR_{\geq 0}\cup\{\infty\}.
\] 
Then the following holds.
\begin{enumerate}
\item
$t_0 = \begin{cases} -\frac{C\cdot D}{C^2} & \text{ if $C^2<0$} \\ \infty &  \text{ if $C^2\geq 0$} \end{cases}.$
\item
For $t\in \RR_{\geq 0}$ the positive part of $D+tC$ in the Zariski decomposition is $P:=D+\min\{t, t_0\}C$, and consequently its volume is
\begin{equation}\label{eq: nef+curve}
\vol(D+tC) = P^2 = D^2+2\min\{t, t_0\}C\cdot D +(\min\{t, t_0\})^2C^2 
\end{equation}
\item If $C\cdot D>0$, then $D+tC$ is big for any $t>0$.
\end{enumerate}
\end{lem}
\begin{proof}
(i) and (ii). If $C^2\geq 0$ then $D+tC$ is nef for any $t\geq 0$ and hence $t_0=\infty$. In this case the positive part of $D+tC$ is $D+tC$ itself. Now suppose  that $C^2<0$. Then one verifies $(D+tC)\cdot C<0$ if and only if $t>-\frac{C\cdot D}{C^2}$. It follows readily that $t_0=-\frac{C\cdot D}{C^2}$ and $D=P+(t-\min\{t_0, t\})C$ is the Zariski decomposition of $D+tC$ with $P$ as the positive part. 

(iii)  If $C\cdot D>0$, then $t_0>0$, and for $0<\epsilon \ll t$, we have by \eqref{eq: nef+curve}
\[
\vol(D+t C) \geq  \vol(D+\epsilon C)  = D^2+ \epsilon C\cdot D +\epsilon^2C^2 > 0
\]
\end{proof}

\begin{lem}\label{lem: fib+horiz}
Let $X$ be a smooth projective surface, and $B$ an $\RR$-divisor such that $K_X+ B$ is nef. Let $C$ be an irreducible curve such that $B\cdot C=0$ and $d:=K_X\cdot C>0$. Then the following holds.
\begin{enumerate}[leftmargin=*]
\item If $p_a(C)>0$, then, for $0\leq t\leq 1$, we have 
\[
\vol(K_X+B+tC) = (K_X+ B)^2+ t^2(2p_a(C) -2) + m(2t-t^2).
\]
\item If $p_a(C)=0$, then, for $0\leq t\leq 1$, we have 
\begin{equation}\label{eq: vol pa=0}
\vol(K_X+B+tC) = 
\begin{cases}
(K_X+ B)^2+ d(2t-t^2) - 2 t^2 &  \text{if $t< \frac{d}{d+2}$}\\
(K_X+ B)^2+ d-2+ \frac{4}{d+2} & \text{if } \frac{d}{d+2}\leq t\leq 1 
\end{cases}
\end{equation}
\end{enumerate} 
\end{lem}
\begin{proof}
For $0\leq t\leq 1$, we have
\begin{equation}\label{eq: tC}
(K_X+B+tC)\cdot C =t(K_X+C)\cdot C + (1-t) K_X\cdot C =t(2p_a(C) -2) + d(1-t).
\end{equation}
(i) Assume that $p_a(C)>0$. Then, for any $0\leq t\leq 1$, we have $(K_X+B+tC)\cdot C\geq d(1-t)\geq 0$ by \eqref{eq: tC}. Since $K_X+B$ is nef, so is $K_X+B+tC$, and hence
\begin{align*}
\vol(K_X+B+tC) & = (K_X+B+tC)^2 \\
& = (K_X+B)^2+2t(K_X+B)\cdot C + t^2 C^2\\
& = (K_X+B)^2+ 2t K_X\cdot C + t^2 C^2\\
&=  (K_X+B)^2+t^2(K_X\cdot C +C^2 ) + (2t-t^2) K_X\cdot C \\
& = (K_X+B)^2+t^2(2p_a(C) -2) + (2t-t^2) K_X\cdot C \\
&=  (K_X+B)^2+t^2(2p_a(C) -2) + d(2t-t^2)  
\end{align*}

(ii) Now assume that $p_a(C)=0$, so $C\cong \PP^1$. By the adjunction formula, we have $C^2=K_X\cdot C -2 = -d-2<0$. By Lemma~\ref{lem: nef+curve}, we have 
\begin{align*}
\vol(K_X+B+tC) & = (K_X+B)^2+2\min\{t, t_0\}C\cdot (K_X+B) +(\min\{t, t_0\})^2C^2 \\
 & = (K_X+ B)^2+ 2d \min\{t, t_0\}  - (d+2)(\min\{t, t_0\})^2.
\end{align*}
where $t_0 = -\frac{(K_X+B)\cdot C}{C^2} = \frac{d}{d+2}$. Now \eqref{eq: vol pa=0} is obtained by spelling out the two cases $t<t_0$ and $t\geq t_0$  of the above formula.
\end{proof}

\section{The minimal volume over a fixed log surface}\label{sec: fixed surf}
In this section, we figure out the minimal possible volume of effective higher models $(U, B_U)$ of a fixed smooth projective log surface $(Z, B_Z)$.
\subsection{A method of finding the minimal volume over a fixed log surface}
\begin{lem}\label{lem: bdd below}
Let $\rho\colon U\rightarrow Z$ be a birational morphism between smooth projective surfaces. Let $B_Z'$ and $B_Z''$ be two effective divisors on $Z$ such that $\rho$ is an isomorphism over a neighborhood of $\supp(B_Z')$. Denote $\displaystyle m=\max_{p}\{ \mult_p B_Z''\}$ with $p\in Z$ running through the points blown up by $\rho$. Then
\[
K_{U}+\rho_*^{-1}(B_Z'+B_Z'')\geq \rho^*\left(K_Z + B_Z'+ \frac{1}{m}B_Z''\right).
\]
\end{lem}
\begin{proof}
We write $\rho\colon U\rightarrow Z$ as the composition $\rho_1\circ\rho_{2}\circ\cdots\circ\rho_n$ of blow-ups, and let $\sE_i\subset U$ be the total transform of the exceptional curve of $\rho_i$ ($1\leq i\leq n$).
Then the following equalities (respectively inequality) hold:
\begin{itemize}[leftmargin=*]
\item $\displaystyle K_U =\rho^* K_Z+ \sum_{1\leq i\leq n} \sE_i$.
\medskip
\item  $\rho^{-1}_*B_Z' = \rho^*B_Z'$, since $\rho$ is an isomorphism over a neighborhood of $\supp(B_Z')$.
\medskip
\item $\displaystyle\rho_*^{-1}B_Z''\geq  \rho^* B_Z''  -m \sum_{1\leq i\leq n} \sE_i$ by the definition of $m$.
\end{itemize}  
Combining these facts, one obtains
\begin{multline*}
K_U+\rho_*^{-1}(B_Z'+B_Z'')= K_U+\rho_*^{-1}B_Z''+\rho^*B_Z' \\
\geq \rho^*K_Z + \frac{1}{m} \rho^* B_Z'' + \rho^*B_Z' = \rho^*\left(K_Z+B_Z'+\frac{1}{m}B_Z\right).
\end{multline*}
\end{proof}

\begin{nota}\label{nota: SZBZ}
Let $(Z, B_Z)$ be a smooth projective log surface satisfying the following conditions:
\begin{itemize}
\item $K_Z+B_Z$ is big,
\item there is a decomposition $B_Z=B_Z'+B_Z''$ such that $B_Z'$ and $B_Z''$ are effective $\RR$-divisors without common components and $\kappa(K_Z+B_Z')\geq 0$.
\end{itemize}
Define $\sS(Z, B_Z; B_Z')$ to be the set of triples $(U, B_{U}, \rho_U)$, where $(U, B_{U})$ is a smooth effective higher model of $(Z, B_Z)$, and $\rho_U \colon U\rightarrow Z$ is birational morphism that is an isomorphism over a neighborhood of $\supp(B_Z')$. Finally, define
\[
\KK^2(Z, B_Z; B_Z'):=\{\vol(K_U+B_U) \mid (U, B_{U}, \rho_U) \in \sS(Z, B_Z; B_Z') \}
\]
\end{nota}

\begin{lem}\label{lem: min fix surf}
Let $(Z, B_Z)$ be as in Notation~\ref{nota: SZBZ} and $(U, B_U, \rho_U)\in \sS(Z, B_Z; B_Z')$. Then the following holds. 
\begin{enumerate}
\item $K_{U}+B_{U}$ is big.
\item If $B_{U}$ is the strict transform of $B_Z$ and $\mult_q  B_U'' \leq 1$ for any $q\in U$, where $B_U''=\rho_{U*}^{-1} B_Z''$ is the strict transform, then $\vol(K_U +B_{U}) = \min \KK^2(Z, B_Z; B_Z')$. In particular, if $(U, B_{U}, \rho_U)\in \sS(Z, B_Z; B_Z')$ resolves the singularities of $B_Z''$ and $B_U$ is the strict transform of $B_Z$, then $\vol(K_U +B_{U}) = \min \KK^2(Z, B_Z; B_Z')$.
\end{enumerate}
\end{lem}
\begin{proof}
(i) Since $\kappa(K_Z+B_Z')\geq 0$, there is an effective $\RR$-divisor $D$ on $Z$ such that $K_Z+B_Z'\sim_{\RR} D$, so $D+B_Z''\sim_{\RR} K_Z+B_Z$ is a big effective divisor. By Lemma~\ref{lem: bdd below}
\[
K_{U}+B_U  \geq K_{U}+\rho_{U*}^{-1}(B_Z'+B_Z'')  \geq \rho_U^*\left(K_Z + B_Z'+ \frac{1}{m}B_Z''\right)
\]
where $\displaystyle m=\max_{p}\{ \mult_p B_Z''\}$. Note that the last $\RR$-divisor is $\RR$-linearly equivalent to $\rho_U^*(D+\frac{1}{m}B_Z'')$, which is big by Lemma~\ref{lem: vol ZD} (iii). It follows that $K_{U}+B_U$ is big.

(ii) Let $(V, B_V, \rho_V)$ be any log surface in $\sS(Z, B_Z; B_Z')$. Then the birational map $\phi = \rho_V^{-1}\circ\rho_U\colon U \dashrightarrow V$ over $Z$ is an isomorphism over an open neighborhood of $\supp(B_Z')$. Let $\gamma_U\colon W\rightarrow U$ and $\gamma_V\colon W\rightarrow V$ be a common resolution of $U$ and $V$ such that $\gamma_U$ and $\gamma_V$ are isomorphisms over a neighborhood of $\supp(B_Z')$:
\[
\begin{tikzcd}
&  W \arrow[ld,"\gamma_U"']\arrow{rd}{\gamma_V}& \\
U  \arrow[rr, dashed, "\phi"] \arrow[rd, "\rho_U"']&& V\arrow[ld, "\rho_V"] \\
& Z &
\end{tikzcd} 
\]
Let  $B_{W}$ be the strict transform of $B_Z$ and $\rho_W=\rho_U\circ \gamma_U = \rho_V\circ \gamma_V$. Then $(W,B_{W}, \rho_W)\in\sS(Z, B_Z; B_Z')$ and $\gamma_{U*}B_{W}=B_{U}$. Since $\gamma_U$ is an isomorphism over a neighborhood of $B_U'$ and $\mult_{q}B_{U}''\leq 1$ for any point $q\in U$, we have by Lemma~\ref{lem: bdd below}
\[
K_{ W}+B_{W}\geq \gamma_U^*(K_{U}+B_{U}).
\] 
On the other hand, 
\[
\gamma_{U*}(K_{ W}+B_{W})= K_{U}+B_{U} \text{ and } \gamma_{V*}(K_{ W}+B_{W})\leq K_{V}+B_V.
\]
It follows that
\[
\vol(K_V+B_V)\geq \vol(K_{ W}+B_{W})=\vol(K_{U}+B_{U}).
\]
Since $(V, B_V, \rho_V)\in \sS(Z, B_Z; B_Z')$ is arbitrary, we infer that $\vol(K_U +B_{U}) = \min \KK^2(Z, B_Z; B_Z')$.  
\end{proof}

\subsection{Minimal volumes over log surfaces with an extended canonical type curve}\label{sec: can curve}
The task of this subsection is to compute $\min \KK^2\left(Z, B_Z^{(c)}; B_Z'\right)$, where $Z$, $B_Z^{(c)}$, and $B_Z'$ satisfy the following conditions:
\begin{itemize}[leftmargin=*]
\item $(Z, B_Z')$ is smooth projective log canonical surface with $B_Z'$ reduced and $K_Z + B_Z'\sim_\QQ 0$.
\item $B_Z^{(c)}= B_Z' + c(C+D)$, where $c\in (0,1]$, $C$ is a connected reduced curve supporting a curve of canonical type, and $D$ is a $(-2)$-curve intersecting $C$ transversally at exactly one point, and $\supp(B_Z') \cap \supp(C+D) = \emptyset$.
\item There are no $(-1)$-curves $G$ such that $(K_Z+B_Z')\cdot G=0$.
\end{itemize}
We recall that a curve $C=\sum n_i C_i$ on a smooth projective surface $Z$ is \emph{of canonical type} if $K_Z\cdot C_i= C\cdot C_i = 0$ for all $i$ (\cite{Mum69}). Indecomposable curves of canonical type are classified into types $\I_b\, (b\geq 0)$, $\II$, $\III$, $\IV$, $\I^*_b\, (b\geq 0)$, $\II^*$, $\III^*$, $\IV^*$ (\cite{BHPV}). 

Lemma~\ref{lem: min fix surf} gives the recipe of finding $\min \KK^2\left(Z, B_Z^{(c)}; B_Z'\right)$:
\begin{enumerate}
\item[(1)] Take $(W, B_W^{(c)}, \rho_W)\in \sS (Z, B_Z^{(c)}; B_Z')$ such that $\rho_W\colon W\rightarrow Z$ resolves the singularities of $C+D$ and $B_W^{(c)} = B_W' + c(C^w + D^w)$, where $B_W', C^w$ and  $D^w$ are the strict transforms of $B_Z', C$ and $D$ respectively. Then $\min \KK^2\left(Z, B_Z^{(c)}; B_Z'\right) =\vol(K_W+B_W^{(c)})$. 
\item[(2)] To compute $\vol(K_W+B_W^{(c)})$, we need to consider the ample model, say $\gamma_c\colon (W, B_W^{(c)})\rightarrow (X^{(c)}, B_{X^{(c)}})$. Then we have $\vol(K_W+B_W^{(c)}) = (K_{X^{(c)}}+B_{X^{(c)}})^2$. The latter is in turn computed by pulling back via the minimal resolution $\pi_c\colon (\tilde X^{(c)}, B_{\tilde X^{(c)}}) \rightarrow (X^{(c)}, B_{X^{(c)}})$. Obviously, we have an induced birational morphism $\gamma_c\colon W\rightarrow \tilde X^{(c)}$ such that $\gamma_c= \pi_c\circ\tilde \gamma_c $. It can be readily checked that $\tilde \gamma_c$ is an isomorphism on a neighborhood of $B_W'$ and that $\Exc(\gamma_c) \subset \Exc(\rho_W)$. Hence there is a birational morphism $\rho_c\colon \tilde X^{(c)}\rightarrow Z$ such that the following diagram commutes:
\begin{equation}\label{eq: recipe}
\begin{tikzcd}
W\arrow[r, "\tilde \gamma_c"] \arrow[rd, "\rho_W"']  \arrow[rr, bend left, "\gamma_c"]& \tilde X^{(c)} \arrow[r, "\pi_c"] \arrow[d, "\rho_c"] & X^{(c)}\\
 & Z&
\end{tikzcd}
\end{equation}
\end{enumerate} 
We explain the relation among the models $\tilde X^{(c)}$ as $c$ varies. For $0<c<c'\leq 1$, we have $\Exc(\tilde \gamma_{c'}) \subset \Exc(\tilde \gamma_c)$, and hence there is a birational morphism $\tilde \gamma_{c'c}\colon \tilde X^{(c')}\rightarrow \tilde X^{(c)}$ such that $\tilde \gamma_{c'c}\circ \tilde \gamma_{c'} = \tilde \gamma_c$. Since every $\tilde X^{(c)}$ sits between $W$ and $Z$, there are finitely many values $0=c_0 < c_1 < \cdots <c_r=1$ such that  for any $c_i < c \leq c_{i+1} (0\leq i\leq r-1)$, $\tilde \gamma_c\colon W\rightarrow \tilde X^{(c)}$ stay the same and $\tilde X^{(c)} \rightarrow \tilde X^{(c_i)}$ is a nontrivial birational contraction; we call $c_i\,\, (1\leq i\leq r-1)$ the \emph{critical values} of $c$. Note that $\tilde X^{(c)}= Z$ for $0<c\ll 1$, and $\tilde X^{(1)}$ dominate all the other $\tilde X^{(c)}$ for $0<c\leq 1$; we will omit the subscript when $c=1$ and denote 
\[
\tilde\gamma:=\tilde \gamma_1,\, \rho := \rho_1,\, \gamma := \gamma_1,\, X:=X^{(1)},  \text{ and } \tilde X:=\tilde X^{(1)}.
\]

Before diving into the details of the computation, we introduce some additional notations and convention:
\begin{itemize}[leftmargin=*]
\item We call the curves appearing in $C+D$ and its inverse images on $\tilde X$ and $W$  the \emph{visible curves}. If the curve $C+D$ is a normal crossing divisor, then we will present the visible curves on $Z$, $\tilde X$ and $W$ by dual graphs; otherwise, the curves are sketched more concretely to indicate the (unique) worse-than-nodal singularity. 
\item The strict transforms of $C_i$ and $D$ on $W$ (resp.~on $\tilde X$) are denoted by  $C_i^w$ and $D^w$ (resp.~by $\tilde C_i$ and $\tilde D$). The $\rho_W$-exceptional curves on $W$ are denoted by $E_i$. If there is only one such curve, then the subscript is omitted. Correspondingly, their image curves on $\tilde X$ are denoted by $\tilde E_i$. These curves are denoted by black bullets in the dual graphs and dashed lines in the sketches of visible curves respectively.
\end{itemize}

Now we proceed according to the canonical type curve that $C$ supports. 

\medskip

\noindent{\bf Case: $C$ supports a curve of canonical type $I_0$.}  In this case $C$ is a smooth elliptic curve with $C^2=0$ and $D$ is a $(-2)$-curve intersecting $C$ transversally at a smooth point. The dual graphs of visible curves on $W\xrightarrow{\rho_W}Z$ are as follows: 
\begin{center}
\begin{tikzpicture}[font=\tiny]
\begin{scope}[xshift = -2.5cm]
\draw (0,0)--(2,0);
\foreach \x in {0, 2}
\draw [fill=white] (\x,0) circle[radius=2pt];  
\draw[fill=black](1,0) circle[radius=2pt];
\node[above] at (0, 0){\tiny $C^w$};
\node[above] at (1, 0){\tiny $E$};
\node[above] at (2, 0){\tiny $D^w$};
\draw[->] (2.5,0) --node[above]{\tiny $\rho_W$}(3.5,0);
\end{scope}
\begin{scope}[xshift = 1.5cm]
\draw (0,0)--(1,0);
\foreach \x in {0, 1}
\draw [fill=white] (\x,0) circle[radius=2pt];  
\node[above] at (0, 0){$C$};
\node[above] at (1, 0){$D$};
\end{scope}
\end{tikzpicture}
\end{center}
There is exactly one critical value $c=\frac{2}{3}$: If $\frac{2}{3}<c\leq 1$, then $\tilde X^{(c)} = W$. In this case, $K_{\tilde X^{(c)}} + B_{\tilde X^{(c)}} \sim_{\RR} c C^w + \frac{1}{3}D^w + E$ and hence
\[
\vol(K_{\tilde X^{(c)}} + B_{\tilde X^{(c)}}) =  \left(E + c C^w + \frac{1}{3}D^w\right)^2 =  -c^2 +2c-\frac{2}{3}.
\]
 If $0<c\leq \frac{2}{3}$, then $\tilde X^{(c)} = Z$, $K_{\tilde X^{(c)}} + B_{\tilde X^{(c)}} \sim_{\RR} c C + \frac{c}{2} D$ and hence
\[
\vol(K_{\tilde X^{(c)}} + B_{\tilde X^{(c)}}) =\left (c C + \frac{c}{2} D\right)^2=\frac{c^2}{2}.
\]
In conclusion, we obtain in this case
\begin{equation}\label{eq: I_0}
\min \KK^2\left(Z, B_Z^{(c)}; B_Z'\right) = \vol(K_{\tilde X^{(c)}} + B_{\tilde X^{(c)}}) = 
\begin{cases}
\frac{1}{2}c^2 & \text{if } c\leq \frac{2}{3} \\
 -c^2 +2c-\frac{2}{3}& \text{if } c >  \frac{2}{3}  \\
\end{cases}
\end{equation}
\medskip

\noindent{\bf Case: $C$ supports a curve of canonical type $\mathrm{I}_b$ with $b\geq 1$}. In this case, $C=\sum_{i=1}^b C_i$ is a cycle of $(-2)$-curves if $b\geq 2$, and $D$ is $(-2)$-curve intersecting one component, say $C_1$, of $C$.  The dual graph of visible curves on $W\xrightarrow{\tilde \gamma} \tilde X \xrightarrow{\rho} Z$ are as follows:
\begin{center}
\begin{tikzpicture}[font=\tiny]
\begin{scope}[xshift=0cm]
\draw (180:1) -- (240:1)--(300:1);
\draw (60:1)--(120:1)--(180:1) -- (180:2);
\draw[dashed] (300:1) .. controls (0:1.2) .. (60:1);
\foreach \x in {180, 240, 300, 60, 120}
\draw[fill=white] (\x: 1) circle[radius=2pt]; 
\draw[fill=white] (180: 2) circle[radius=2pt]; 
\foreach \x in {210, 270, 90, 150}
\draw[fill=black] (\x: .866) circle[radius=2pt];
\draw[fill=black] (180: 1.5) circle[radius=2pt];
\node[below] at (-1.5, 0) {$E_0$};
\node at (210:0.6) {$E_1$};
\node at (270: 0.6) {$E_2$};
\node[below=.1] at (90:1) {$E_{b-1}$};
\node at (150:0.6) {$E_b$};
\node [above] at (-2, 0) {$D^w$};
\node[above] at (-1.05, 0) {$C_1^w$};
\node[below] at (240: 1) {$C_2^w$};
\node[below] at (300: 1) {$C_3^w$};
\node[above] at (60: 1) {$C_{b-1}^w$};
\node[above] at (120: 1) {$C_b^w$};
\draw[->] (1.5,0) --node[above]{$\tilde \gamma$}(2,0);
\end{scope}

\begin{scope}[xshift=4.5cm]
\draw (180:1) -- (240:1)--(300:1);
\draw (60:1)--(120:1)--(180:1) -- (180:2);
\draw[dashed] (300:1) .. controls (0:1.2) .. (60:1);
\foreach \x in {180, 240, 300, 60, 120}
\draw[fill=white] (\x: 1) circle[radius=2pt]; 
\draw[fill=white] (180: 2) circle[radius=2pt]; 
\foreach \x in {210, 150}
\draw[fill=black] (\x: .866) circle[radius=2pt];
\node at (210:0.6) {$\tilde E_1$};
\node at (150:0.6) {$\tilde E_b$};
\node [above] at (-2, 0) {$\tilde D$};
\node[above] at (-1.05, 0) {$\tilde C_1$};
\node[below] at (240: 1) {$\tilde C_2$};
\node[below] at (300: 1) {$\tilde C_3$};
\node[above] at (60: 1) {$\tilde C_{b-1}$};
\node[above] at (120: 1) {$\tilde C_b$};
\draw[->] (1.5,0) --node[above]{$\rho$}(2,0);
\end{scope}

\begin{scope}[xshift=9cm]
\draw (180:1) -- (240:1)--(300:1);
\draw (60:1)--(120:1)--(180:1) -- (180:2);
\draw[dashed] (300:1) .. controls (0:1.2) .. (60:1);
\foreach \x in {180, 240, 300, 60, 120}
\draw[fill=white] (\x: 1) circle[radius=2pt]; 
\draw[fill=white] (180: 2) circle[radius=2pt]; 
\node [above] at (-2, 0) {$D$};
\node[above] at (-1.05, 0) {$C_1$};
\node[below] at (240: 1) {$C_2$};
\node[below] at (300: 1) {$C_3$};
\node[above] at (60: 1) {$C_{b-1}$};
\node[above] at (120: 1) {$C_b$};
\end{scope}
\end{tikzpicture}
\end{center}
If $b=1$, then $C$ is a nodal rational curve with $C^2=0$, and the above dual graph degenerate to
\begin{center}
\begin{tikzpicture}[font=\tiny]
\begin{scope}
\draw  (-1, 0) --(0,0);
\draw (0,0) [out = 30, in = 150] to (0:1);
\draw (0,0) [out = -30, in = -150] to (0:1);
\draw[fill=white] (0,0) circle[radius=2pt];
\draw[fill=white]  (-1, 0) circle[radius=2pt];
\draw[fill=black] (-.5,0) circle[radius=2pt];
\draw[fill=black] (1, 0) circle[radius=2pt];
\node[above]  at (1, 0) {$E_1$};
\node[below] at (-.5,0){$E_0$};
\node[above] at (0,0) {$C^w$};
\node[above] at (-1, 0) {$D^w$};
\draw[->] (1.5,0) --node[above]{$\tilde \gamma$}(2.5,0);
\end{scope}

\begin{scope}[xshift=4cm]
\draw  (-1, 0) --(0,0);
\draw (0,0) [out = 30, in = 150] to (0:1);
\draw (0,0) [out = -30, in = -150] to (0:1);
\draw[fill=white] (0,0) circle[radius=2pt];
\draw[fill=white]  (-1, 0) circle[radius=2pt];
\draw[fill=black] (1, 0) circle[radius=2pt];
\node[above]  at (1, 0) {$\tilde E_1$};
\node[above] at (0,0) {$\tilde C$};
\node[above] at (-1, 0) {$\tilde D$};
\draw[->] (1.5,0) --node[above]{$\rho$}(2.5,0);
\end{scope}

\begin{scope}[xshift=8cm]
\draw  (-1, 0) --(0,0);
\draw (.5,0) circle[radius=.5cm];
\draw[fill=white] (0,0) circle[radius=2pt];
\draw[fill=white]  (-1, 0) circle[radius=2pt];
\node[above] at (-.1,0) {$ C$};
\node[above] at (-1, 0) {$ D$};
\end{scope}
\end{tikzpicture}
\end{center}
There is exactly one critical value $c=\frac{1}{2}$.  For $\frac{1}{2}<c\leq 1$,  we have $\tilde X^{(c)}=\tilde X$ and
\[
K_{\tilde X^{(c)}} + B_{\tilde X^{(c)}}\sim_\RR   a\tilde C_1 + \frac{1}{2} \sum_{2\leq i\leq b} \tilde C_i + \frac{a}{2} \tilde D +\sum_{i\in\{1,b\}}\tilde E_i
\]
where $a=\min\{\frac{4}{7}, c\}$. For $0<c\leq \frac{1}{2}$, we have $\tilde X^{(c)} = Z$ and 
\[
K_{\tilde X^{(c)}} + B_{\tilde X^{(c)}}\sim_\RR cC+\frac{c}{2}D
\]
Now it is straightforward to compute
\begin{equation}\label{eq: I_b}
\min \KK^2\left(Z, B_Z^{(c)}; B_Z'\right) = \vol(K_{\tilde X^{(c)}} + B_{\tilde X^{(c)}}) = 
\begin{cases}
\frac{1}{2}c^2 & \text{if } c\leq \frac{1}{2} \\
-\frac{7}{2}c^2+4c-1 & \text{if } \frac{1}{2}< c\leq \frac{4}{7} \\
\frac{1}{7} & \text{if } c> \frac{4}{7} 
\end{cases}
\end{equation}

\medskip

\noindent{\bf Case: $C$ supports a curve of canonical type II.} In this case, $C$ is a cuspical rational curve with $C^2=0$ and $D$ is a $(-2)$-curve intersecting $C$ transversally at a smooth point. The sketches of visible curves on $W\xrightarrow{\tilde \gamma} \tilde X \xrightarrow{\rho} Z$ are as follows:
\begin{center}
\begin{tikzpicture}[font=\tiny]
\begin{scope}
\path (0,0) coordinate (O);
\path (.8,1) coordinate (P1);
\path (.8,-1) coordinate (P2);
\path (0, 1) coordinate (Q1);
\path (0, -1) coordinate (Q2);
\path (.25, .8) coordinate (R1);
\path (1.5, .8) coordinate (R2);
\path (1,1) coordinate (S1);
\path (1.5,.25) coordinate (S2);
\draw (P1) to  [out=210, in = 90] (O);
\draw (P2) to  [out=150, in = -90] (O);
\draw[dashed] (Q1) -- (Q2);
\draw[dashed] (R1) -- node[below]{$E_1$}(R2);
\draw (S1) -- node[very near end, right]{$D^w$}(S2);
\node[right] at (P2) {$C^w$};
\node[left] at (Q1) {$E_2$};
\draw[->] (2.5,0) --node[above]{$\tilde \gamma$}(3.5,0);
\end{scope}
\begin{scope}[xshift = 4cm]
\path (0,0) coordinate (O);
\path (.8,1) coordinate (P1);
\path (.8,-1) coordinate (P2);
\path (0, 1) coordinate (Q1);
\path (0, -1) coordinate (Q2);
\path (.25, .8) coordinate (R1);
\path (1.5, .8) coordinate (R2);
\path (1,1) coordinate (S1);
\path (1.5,.25) coordinate (S2);
\draw (P1) to  [out=210, in = 90] (O);
\draw (P2) to  [out=150, in = -90] (O);
\draw[dashed] (Q1) -- (Q2);
\draw (R1) -- node[very near end, above]{$\tilde D$}(S2);
\node[right] at (P2) {$\tilde C$};
\node[left] at (Q1) {$\tilde E_2$};
\draw[->] (2.5,0) --node[above]{$\rho$}(3.5,0);
\end{scope}
\begin{scope}[xshift = 8cm]
\path (0,0) coordinate (O);
\path (1,1) coordinate (P);
\path (1,-1) coordinate (Q);
\path (.5,0) coordinate (R);
\path (1, .5) coordinate (T1);
\path (1, -.5) coordinate (T2);
\path (.5, 1) coordinate (S1);
\path (1.5, .5) coordinate (S2);
\draw (O) .. controls (R) and (T1).. (P);
\draw (O) .. controls (R) and (T2).. (Q);
\draw (S1) -- (S2);
\node[above] at (S2) {$D$};
\node[right] at (Q) {$C$};
\end{scope}
\end{tikzpicture}
\end{center}
There is exactly one critical value $c=\frac{1}{2}$. For $\frac{1}{2}<c\leq 1$, we have $\tilde X^{(c)} =\tilde  X$ and 
\[
K_{\tilde X^{(c)}} + B_{\tilde X^{(c)}}\sim_\RR  a\tilde C + \frac{a}{2} \tilde D + \tilde E_2
\]
where $a=\min\{c,\frac{4}{7}\}$. For $0<c\leq \frac{1}{2}$, we have $\tilde X^{(c)}=Z$ and
\[
K_{\tilde X^{(c)}} + B_{\tilde X^{(c)}}\sim_\RR  a C + \frac{a}{2}  D  
\]
The formula for $\min \KK^2\left(Z, B_Z^{(c)}; B_Z'\right) = \vol(K_{\tilde X^{(c)}} + B_{\tilde X^{(c)}})$ is the same with the case of $I_b, b\geq 1$; see \eqref{eq: I_b}.

\medskip

\noindent{\bf Case: $C$ supports a curve of canonical type III.}
  In this case, $C=C_1+C_2$ consists of two $(-2)$-curves intersecting at one point with multiplicity two, and $D$ is a $(-2)$-curve intersecting $C$ transversally at a smooth point, say, of $C_1$.  The sketches of visible curves on $W\xrightarrow{\tilde \gamma} \tilde X \xrightarrow{\rho} Z$ are as follows:
 \begin{center}
\begin{tikzpicture}[font=\tiny]
\begin{scope}
\path (0,0) coordinate (O);
\path (.8,1) coordinate (P1);
\path (.8,-1) coordinate (P2);
\path (-.8,1) coordinate (P3);
\path (-.8,-1) coordinate (P4);
\path (-0.8, 0) coordinate (Q1);
\path (0.8, 0) coordinate (Q2);
\path (.25, .8) coordinate (R1);
\path (1.5, .8) coordinate (R2);
\path (1,1) coordinate (S1);
\path (1.5,.25) coordinate (S2);
\draw (P1) to  (P4);
\draw (P2) to (P3);
\draw[dashed] (Q1) node[left]{$E_2$} -- (Q2);
\draw[dashed] (R1) -- node[below]{$E_1$}(R2);
\draw (S1) -- node[very near end, right]{$D^w$}(S2);
\node[right] at (P2) {$C_2^w$};
\node[left] at (P4) {$C_1^w$};
\draw[->] (2,0) --node[above]{$\tilde \gamma$}(3,0);
\end{scope}

\begin{scope}[xshift=5cm]
\path (0,0) coordinate (O);
\path (.8,1) coordinate (P1);
\path (.8,-1) coordinate (P2);
\path (-.8,1) coordinate (P3);
\path (-.8,-1) coordinate (P4);
\path (1, 0.5) coordinate (Q1);
\path (0, 1) coordinate (Q2);
\path (-0.8, 0) coordinate (R1);
\path (0.8, 0) coordinate (R2);
\draw (P1) to  (P4);
\draw (P2) to (P3);
\draw (Q1) node[right]{$\tilde D$} -- (Q2);
\draw[dashed] (R1) node[left]{$\tilde E_2$} -- (R2);
\node[right] at (P2) {$\tilde C_2$};
\node[left] at (P4) {$\tilde C_1$};
\draw[->] (2,0) --node[above]{$\rho$}(3,0);
\end{scope}

\begin{scope}[xshift =10cm]
\path (0,0) coordinate (O);
\path (.8,1) coordinate (P1);
\path (.8,-1) coordinate (P2);
\path (-.8,1) coordinate (P3);
\path (-.8,-1) coordinate (P4);
\path (1, 0.5) coordinate (Q1);
\path (0, 1) coordinate (Q2);
\draw (P1) to  [out=210, in = 90] (O);
\draw (P2) to  [out=150, in = -90] (O);
\draw (P3) to  [out=-30, in = 90] (O);
\draw (P4) to  [out=30, in = -90] (O);
\draw (Q1) -- (Q2);
\node[right] at (P2) {$C_2$};
\node[left] at (P4) {$C_1$};
\node[right] at (Q1) {$D$};
\end{scope}
\end{tikzpicture}
\end{center}
There is exactly one critical value $c=\frac{1}{2}$. For $\frac{1}{2}<c\leq 1$,  we have $\tilde X^{(c)} =\tilde  X$ and
\[
K_{\tilde X^{(c)}} + B_{\tilde X^{(c)}}\sim_\RR a\tilde C_1 + \frac{a+1}{3}\tilde C_2 + \frac{a}{2} \tilde D +  \tilde E_2
\]
where $a=\min\{c,\frac{8}{13}\}$. For $0<c\leq \frac{1}{2}$, we have $\tilde X^{(c)}=Z$ and
\[
K_{\tilde X^{(c)}} + B_{\tilde X^{(c)}}\sim_\RR    c C + \frac{c}{2} D
\]
The formula for the minimal volume is as follows:
\begin{equation}\label{eq: III}
\min \KK^2\left(Z, B_Z^{(c)}; B_Z'\right) = \vol(K_{\tilde X^{(c)}} + B_{\tilde X^{(c)}}) = 
\begin{cases}
\frac{1}{2}c^2 & \text{if }  c\leq \frac{1}{2} \\
-\frac{13}{6}c^2+\frac{8}{3}c-\frac{2}{3} & \text{if } \frac{1}{2}<  c\leq \frac{8}{13} \\
\frac{2}{13} & \text{if } c> \frac{8}{13} 
\end{cases}
\end{equation}
\medskip

\noindent{\bf Case: $C$ supports a curve of canonical type IV.}
 In this case, $C=C_1+C_2+C_3$ consists of three $(-2)$-curves intersecting at one point, and $D$ is a $(-2)$-curve intersecting, say $C_1$, transversally at a smooth point.   The sketches of visible curves on $W\xrightarrow{\tilde \gamma} \tilde X \xrightarrow{\rho} Z$ are as follows:
 \begin{center}
\begin{tikzpicture}[font=\tiny]
\begin{scope}[xshift=0cm]
\path (0,0) coordinate (O);
\path (-.5, .5) coordinate (P1);
\path (.8, .5) coordinate (P2);
\path  (-.5, 0) coordinate (Q1);
\path  (.5, 0) coordinate (Q2);
\path (-.5, -.5)  coordinate (R1);
\path (.5, -.5)  coordinate (R2);
\path (.5,0.4) coordinate (S1);
\path (.5, 1) coordinate (S2);
\path (0,-1) coordinate (T1);
\path (0, 1) coordinate (T2);
\path (0.25,0.75) coordinate (U1);
\path (1.2, 1) coordinate (U2);
\draw (P1)--(P2);
\draw (Q1)--(Q2);
\draw (R1)--(R2);
\draw[dashed] (S1) -- (S2);
\draw[dashed] (T1)--(T2);
\draw (U1)--(U2);
\node[left] at (P1) {$C_1^w$};
\node[left] at (R1) {$C_3^w$};
\node[left] at (Q1) {$C_2^w$};
\node[above] at (U2){$D^w$};
\node[above] at (S2) {$E_1$};
\node[right] at (T1) {$E_2$};
\draw[->] (1.5,0) --node[above]{$\tilde \gamma$}(2.5,0);
\end{scope}

\begin{scope}[xshift=4cm]
\path (0,0) coordinate (O);
\path (-.5, .5) coordinate (P1);
\path (.8, .5) coordinate (P2);
\path  (-.5, 0) coordinate (Q1);
\path  (.5, 0) coordinate (Q2);
\path (-.5, -.5)  coordinate (R1);
\path (.5, -.5)  coordinate (R2);
\path (0.25,0.25) coordinate (S1);
\path (1, 1) coordinate (S2);
\path (0,-1) coordinate (T1);
\path (0, 1) coordinate (T2);
\draw (P1)--(P2);
\draw (Q1)--(Q2);
\draw (R1)--(R2);
\draw (S1) -- (S2);
\draw[dashed] (T1)--(T2);
\node[left] at (P1) {$\tilde C_1$};
\node[left] at (R1) {$\tilde C_3$};
\node[left] at (Q1) {$\tilde C_2$};
\node[right]at (S2){$\tilde D$};
\node[right] at (T1) {$\tilde E_2$};
\draw[->] (1.5,0) --node[above]{$\rho$}(2.5,0);
\end{scope}

\begin{scope}[xshift=8cm]
\path (0,0) coordinate (O);
\path (0:1) coordinate (P1);
\path (180:1) coordinate (P2);
\path (60:1) coordinate (Q1);
\path (240:1) coordinate (Q2);
\path (120:1) coordinate (R1);
\path (300:1) coordinate (R2);
\path (0,.7) coordinate (S1);
\path (1, 0.5) coordinate (S2);
\draw (P1)--(P2);
\draw (Q1)--(Q2);
\draw (R1)--(R2);
\draw (S1)--(S2);
\node[right] at (P1) {$C_3$};
\node[right] at (R2) {$C_2$};
\node[left] at (Q2) {$C_1$};
\node[right]at (S2){$D$};
\end{scope}
\end{tikzpicture}
\end{center}
There is exactly one critical value $c=\frac{1}{3}$. For $\frac{1}{3}<c\leq 1$,  we have $\tilde X^{(c)} =\tilde  X$ and
\[
K_{\tilde X^{(c)}} + B_{\tilde X^{(c)}}\sim_\RR   + a\tilde C_1 +\frac{1}{3}(\tilde C_2  + \tilde C_3) + \frac{a}{2} \tilde D  + \tilde E_2
\]
where $a=\min\{c,\frac{2}{5}\}$. For $0<c\leq \frac{1}{3}$, we have $\tilde X^{(c)}=Z$ and
\[
K_{\tilde X^{(c)}} + B_{\tilde X^{(c)}}\sim_\RR  c C + \frac{c}{2} D 
\]
The formula for the minimal volume is as follows:
\begin{equation}\label{eq: IV}
\min \KK^2\left(Z, B_Z^{(c)}; B_Z'\right) = \vol(K_{\tilde X^{(c)}} + B_{\tilde X^{(c)}}) = 
\begin{cases}
\frac{1}{2}c^2 & \text{if } c\leq \frac{1}{3} \\
-\frac{5}{2}c^2+2c-\frac{1}{3} & \text{if } \frac{1}{3}< c\leq \frac{2}{5} \\
\frac{1}{15} & \text{if } c> \frac{2}{5} 
\end{cases}
\end{equation}
\medskip

\noindent{\bf  Case: $C$ supports a curve of canonical type $\I_0^*$, and $D$ is attached to the central component of $C$.}
In this case, the dual graphs of visible curves on $W\xrightarrow{\rho_W} Z$ are as follows:
\begin{center}
\begin{tikzpicture}[font=\tiny]
\begin{scope}[scale=1.5]
\node[wbullet, label=below:$D^w$] (B1) at (0:1) {};
\node[wbullet, label=right:$C_1^w$] (B2) at (72:1) {};
\node[wbullet, label=above:$C_2^w$] (B3) at (144:1) {};
\node[wbullet, label=below:$C_3^w$] (B4) at (216:1){};
\node[wbullet, label=below:$C_4^w$] (B5) at (288:1){};
\node[wbullet](B0) at (0,0){}
edge node[bbullet, midway]{} (B1)
edge node[bbullet, midway]{} (B2)
edge node[bbullet, midway]{} (B3)
edge node[bbullet, midway]{} (B4)
edge node[bbullet, midway]{} (B5);
\node at (-36:.3) {$C_0^w$};
\node at (20:0.5){$E_0$};
\node at (92:0.5){$E_1$};
\node at (164:0.5){$E_2$};
\node at (236:0.5){$E_3$};
\node[right] at (288:0.5){$E_4$};
\draw[->] (1.5,0) -- (2.5,0) node[midway, above]{$\rho_W$};
\end{scope}

\begin{scope}[xshift=6cm, scale=1.5]
\node[wbullet, label=below:$D$] (B1) at (0:1) {};
\node[wbullet, label=right:$C_1$] (B2) at (72:1) {};
\node[wbullet, label=above:$C_2$] (B3) at (144:1) {};
\node[wbullet, label=below:$C_3$] (B4) at (216:1){};
\node[wbullet, label=below:$C_4$] (B5) at (288:1){};
\node[wbullet](B0) at (0,0){}
edge (B1)
edge  (B2)
edge  (B3)
edge  (B4)
edge  (B5);
\node at (-36:.3) {$C_0$};
\end{scope}
\end{tikzpicture}
\end{center}
There is exactly one critical value $c=\frac{2}{3}$. For $\frac{2}{3}<c\leq 1$, we have $\tilde X^{(c)} = W$ and 
\[
K_{\tilde X^{(c)}} + B_{\tilde X^{(c)}}\sim_\RR a C_0^w  +  \frac{1}{3} \sum_{1\leq i\leq 4} C_i^w +\frac{1}{3} D^w+  \sum_{0\leq i\leq 4} E_i 
\]
where $a=\min\{{5\over 7}, c\}$. For $0 <c\leq \frac{2}{3}$, we have $\tilde X^{(c)} = Z$ and
\[
K_{\tilde X^{(c)}} + B_{\tilde X^{(c)}}\sim_\RR cC_0 + \frac{c}{2}\left(\sum_{1\leq i\leq 4} C_i + D\right)
\]
The formula for the minimal volume is
\begin{equation}\label{eq: I_0*1}
\min \KK^2\left(Z, B_Z^{(c)}; B_Z'\right) = \vol(K_{\tilde X^{(c)}} + B_{\tilde X^{(c)}}) =
\begin{cases}
\frac{1}{2}c^2 & \text{if } c\leq \frac{2}{3} \\
-7c^2 + 10 c - {10\over 3}& \text{if } \frac{2}{3}< c\leq \frac{5}{7} \\
{5\over 21} & \text{if } c> \frac{5}{7} 
\end{cases}
\end{equation}
\medskip

\noindent{\bf  Case: $C$ supports a curve of canonical type $\I_0^*$,  and $D$ is attached to an end component of $C$.}
The dual graphs of the visible curves on $\begin{tikzcd}W\arrow[r, "\tilde \gamma"]\arrow[rr, bend right,"\rho_W"']& \tilde X\arrow[r, "\rho"] & Z\end{tikzcd}$ are as follows:
\begin{center}
\begin{tikzpicture}[font=\tiny]
\begin{scope}[xshift=2.5cm, scale =1.2]
\node[wbullet, label = below:$D^w$] (D) at (180:2){};
\node[wbullet, label = below right :$C_0^w$] (C0) at (0,0){};
\node[wbullet, label = below:$C_3^w$] (C3) at (0:1){};
\node[wbullet, label = right:$C_4^w$] (C4) at (90:1){};
\node[wbullet, label = below:$C_1^w$] (C1) at (180:1){};
\node[wbullet, label = right:$C_2^w$] (C2) at (270:1){};
\path (C0) edge node[bbullet, midway, label=above:$E_1$]{} (C1)
edge node[bbullet, midway, label=left:$E_2$](E2){}(C2)
edge node[bbullet, midway, label=above:$E_3$](E3){}(C3)
edge node[bbullet, midway, label=left:$E_4$](E4){}(C4);
\draw (D) -- node[bbullet, midway, label=above:$E_0$]{}(C1);
\draw[->](-1,-1)--node[midway,sloped, above]{$\tilde \gamma$}(-2,-2);
\draw[->](1,-1)--node[midway,sloped, above]{$\rho_W$}(2,-2);
\end{scope}
\begin{scope}[yshift=-4cm]
\node[wbullet, label = below:$\tilde D$] (D) at (180:2){};
\node[wbullet, label = below right :$\tilde C_0$] (C0) at (0,0){};
\node[wbullet, label = below:$\tilde C_3$] (C3) at (0:1){};
\node[wbullet, label = right:$\tilde C_4$] (C4) at (90:1){};
\node[wbullet, label = below:$\tilde C_1$] (C1) at (180:1){};
\node[wbullet, label = right:$\tilde C_2$] (C2) at (270:1){};
\path (C0) edge node[bbullet, midway, label=above:$\tilde E_1$]{} (C1)
edge (C2)
edge (C3)
edge (C4);
\draw (D) -- (C1);
\draw[->](1.5,0)--node[midway,sloped, above]{$\rho$}(2.5,0);
\end{scope}
\begin{scope}[xshift=5cm, yshift = -4cm]
\node[wbullet, label = below:$D$] (D) at (180:2){};
\node[wbullet, label = below right :$C_0$] (C0) at (0,0){};
\node[wbullet, label = below:$C_3$] (C3) at (0:1){};
\node[wbullet, label = right:$C_4$] (C4) at (90:1){};
\node[wbullet, label = below:$C_1$] (C1) at (180:1){};
\node[wbullet, label = right:$C_2$] (C2) at (270:1){};
\path (C0) edge (C1)
edge (C2)
edge (C3)
edge (C4);
\draw (D) --(C1);
\end{scope}
\end{tikzpicture}
\end{center}
There is exactly one critical value $c=\frac{3}{5}$. If $\frac{3}{5}<c\leq 1$, we have $\tilde X^{(c)}= \tilde X$ and 
\[
K_{\tilde X^{(c)}} + B_{\tilde X^{(c)}}\sim_\RR a\tilde C_0 + \frac{2}{5} \tilde C_1  +\frac{a}{2}\sum_{2\leq i\leq 4} \tilde C_i+ \frac{1}{5} \tilde D + \tilde E_1  
\]
where $a=\min\{{2\over 3}, c\}$.  If $0<c\leq \frac{3}{5}$, we have $\tilde X^{(c)} = Z$ and
\[
K_{\tilde X^{(c)}} + B_{\tilde X^{(c)}}\sim_\RR  c C_0  +  \frac{2c}{3} C_1 + \frac{c}{2}\sum_{2\leq i\leq 4} C_i  + \frac{c}{3} D
\]
The formula for the minimal volume is
\begin{equation}\label{eq: I_0*2}
\min \KK^2\left(Z, B_Z^{(c)}; B_Z'\right) = \vol(K_{\tilde X^{(c)}} + B_{\tilde X^{(c)}}) =
\begin{cases}
\frac{1}{6}c^2 & \text{if } c \leq \frac{3}{5}\\
-\frac{3}{2}c^2 + 2 c -\frac{3}{5}& \text{if }\frac{3}{5}< c\leq \frac{2}{3} \\
{1\over 15} & \text{if } c>  \frac{2}{3}
\end{cases}
\end{equation}

\medskip

\noindent{\bf  Case: $C$ supports a curve of canonical type $\I_b^*,\, b\geq 1$,  and $D$ is attached to an end component of $C$.}
The dual graphs of the visible curves on $\begin{tikzcd}W\arrow[r, "\tilde \gamma"]\arrow[rr, bend right,"\rho_W"']& \tilde X\arrow[r, "\rho"] & Z\end{tikzcd}$ are as follows:
\begin{center}
\begin{tikzpicture}[inner sep = 0, font=\tiny, scale =.7]
\begin{scope}[xshift=4cm, scale=1.5]
\node[wbullet, label = below:$D^w$] (D) at (-2,0){};
\node[wbullet, label = below:$C_{1}^w$] (C1) at (-1,0){};
\node[wbullet, label = below:$C_2^w$] (C2) at (0,0){};
\node[wbullet, label = below:$C_3^w$] (C3) at (1,0){};
\node[wbullet, label = below:$C_{b+1}^w$] (Cb+1) at (3,0){};
\node[wbullet, label = below:$C_{b+2}^w$] (Cb+2) at (4,0){};
\node[wbullet, label = below:$C_{b+3}^w$] (Cb+3) at (5,0){};
\node[wbullet, label = right:$C_{b+5}^w$] (Cb+5) at (4,1){};
\node[wbullet, label = right:$C_{b+4}^w$] (Cb+4) at (0,1){};
\draw (D) --node[bbullet, label = above: $E_1$](E1){} (C1) --node[bbullet, label = above: $E_{2}$](2){}  (C2) --node[bbullet, label = above: $E_3$](E3){}  (C3);
\draw[dashed](C3)--(Cb+1);
\draw (Cb+1)--node[bbullet, label = above: $E_{b+2}$](Eb+2){} (Cb+2)--node[bbullet, label = above: $E_{b+3}$](Eb+3){} (Cb+3);
\draw (Cb+2)--node[bbullet, label =  left: $E_{b+5}$](Eb+5){} (Cb+5);
\draw (C2)--node[bbullet, label =  right: $E_{b+4}$](Eb+4){} (Cb+4);
\draw[->](0,-1)--node[midway,sloped, above]{$\tilde \gamma$}(-.6,-1.6);
\draw[->](4,-1)--node[midway,sloped, above]{$\rho_W$}(4.6,-1.6);
\end{scope}
\begin{scope}[yshift=-4cm]
\node[wbullet, label = below:$\tilde D$] (D) at (-2,0){};
\node[wbullet, label = below:$\tilde C_1$] (C1) at (-1,0){};
\node[wbullet, label = below:$\tilde C_2$] (C2) at (0,0){};
\node[wbullet, label = below:$\tilde C_3$] (C3) at (1,0){};
\node[wbullet, label = below:$\tilde C_{b+1}$] (Cb+1) at (3,0){};
\node[wbullet, label = below:$\tilde C_{b+2}$] (Cb+2) at (4,0){};
\node[wbullet, label = below:$\tilde C_{b+3}$] (Cb+3) at (5,0){};
\node[wbullet, label = right:$\tilde C_{b+4}$] (Cb+4) at (0,1){};
\node[wbullet, label = right:$\tilde C_{b+5}$] (Cb+5) at (4,1){};
\draw (D) -- (C1) -- (C2) --node[bbullet, label = above: $\tilde E_3$](E3){}   (C3);
\draw[dashed](C3)--(Cb+1);
\draw (Cb+1)--(Cb+2)--(Cb+3);
\draw (Cb+2)--(Cb+5);
\draw (C2)--(Cb+4);
\draw[->](5.5,0)--node[midway,sloped, above]{$\rho$}(6.5,0);
\end{scope}
\begin{scope}[xshift=9cm, yshift=-4cm]
\node[wbullet, label = below:$D$] (D) at (-2,0){};
\node[wbullet, label = below:$C_1$] (C1) at (-1,0){};
\node[wbullet, label = below:$C_2$] (C2) at (0,0){};
\node[wbullet, label = below:$C_3$] (C3) at (1,0){};
\node[wbullet, label = below:$C_{b+1}$] (Cb+1) at (3,0){};
\node[wbullet, label = below:$C_{b+2}$] (Cb+2) at (4,0){};
\node[wbullet, label = below:$C_{b+3}$] (Cb+3) at (5,0){};
\node[wbullet, label = right:$C_{b+4}$] (Cb+4) at (0,1){};
\node[wbullet, label = right:$C_{b+5}$] (Cb+5) at (4,1){};
\draw (D) -- (C1) -- (C2) -- (C3);
\draw[dashed](C3)--(Cb+1);
\draw (Cb+1)--(Cb+2)--(Cb+3);
\draw (C2)--(Cb+4);
\draw (Cb+2)--(Cb+5);
\end{scope}
\end{tikzpicture}
\end{center}
There is exactly one critical value $c=\frac{1}{2}$. If $\frac{1}{2}<c\leq 1$, we have $\tilde X^{(c)}= \tilde X$ and 
\[
K_{\tilde X^{(c)}} + B_{\tilde X^{(c)}}\sim_\RR \frac{2a}{3} \tilde C_{1} + a\tilde C_2 + \frac{1}{2} \sum_{3\leq i\leq b+2} \tilde C_i +\frac{1}{4} \tilde C_{b+3} + \frac{a}{2} \tilde C_{b+4}+\frac{1}{4} \tilde C_{b+5} + \frac{a}{3} \tilde D+ \tilde E_3 
\]
where $a=\min\{{6\over 11}, c\}$.  If $0<c\leq \frac{1}{2}$, we have $\tilde X^{(c)} = Z$ and
\[
K_{\tilde X^{(c)}} + B_{\tilde X^{(c)}}\sim_\RR  \frac{2c}{3} C_{1} + c\sum_{2\leq i\leq b+2} C_i + \frac{c}{2}\sum_{b+3\leq i\leq b+5} C_i  + \frac{c}{3} D 
\]
The formula for the volume is
\begin{equation}\label{eq: I_0*}
\min \KK^2\left(Z, B_Z^{(c)}; B_Z'\right) = \vol(K_{\tilde X^{(c)}} + B_{\tilde X^{(c)}}) =
\begin{cases}
\frac{1}{6}c^2 & \text{if } 0<c\leq \frac{1}{2} \\
-{11\over 6}c^2 + 2 c - {1\over 2}& \text{if }  \frac{1}{2}< c\leq \frac{6}{11} \\
{1\over 22} & \text{if } c>  {6\over 11}
\end{cases}
\end{equation}

\medskip

\noindent{\bf Case: $C$ supports a curve of canonical type $\II^*$ and $D$ is attached to the end component of the longest chain of $C$.}
The dual graphs of the visible curves on $\begin{tikzcd}W\arrow[r, "\tilde \gamma"]\arrow[rr, bend right,"\rho_W"']& \tilde X\arrow[r, "\rho"] & Z\end{tikzcd}$ are as follows:
\begin{center}
\begin{tikzpicture}[font=\tiny, scale=.7]
\begin{scope}[xshift=3.3cm, scale=1.5]
\node[wbullet, label = below:$D^w$]  (D) at (0,0) {};
\foreach \x in {1,2,3,4,5,6,7,8}
\node[wbullet, label = below:$C_\x^w$] (C\x) at (\x,0) {};
\node[wbullet, label = right:$C_9^w$] at (6, 1) (C9){};
\foreach \x/\y in {1/2,2/3,3/4,4/5,5/6,6/7,7/8}
\draw (C\x)--node[bbullet, label = above: $E_\y$](E\y){}(C\y);
\draw (D) -- node[bbullet, label = above: $E_1$](E1){}(C1);
\draw (C6) -- node[bbullet, label = right: $E_9$](E9){}(C9);
\draw[->] (3,-1) --node[above, sloped]{$\tilde \gamma$} (2.4,-1.6);
\draw[->] (5,-1) --node[above, sloped]{$\rho_W$} (5.6,-1.6);
\end{scope}
\begin{scope}[yshift=-4cm]
\node[wbullet, label = below:$\tilde D$]  (D) at (0,0) {};
\foreach \x in {1,2,3,4,5,6,7,8}
\node[wbullet, label = below:$\tilde C_\x$] (C\x) at (\x,0) {};
\node[wbullet, label = right:$\tilde C_9$] at (6, 1) (C9){};
\draw (D) --(C1)--(C2)--(C3)--(C4)--(C5)--node[bbullet, midway, label = above: $\tilde E_6$](E6){}(C6)--(C7)--(C8);
\draw (C6) -- (C9);
\draw[->] (8.5,0) --node[above]{$\rho$} (9.5,0);
\end{scope}
\begin{scope}[xshift=10cm, yshift=-4cm]
\node[wbullet, label = below:$D$]  (D) at (0,0) {};
\foreach \x in {1,2,3,4,5,6,7,8}
\node[wbullet, label = below:$C_\x$] (C\x) at (\x,0) {};
\node[wbullet, label = right:$C_9$] at (6, 1) (C9){};
\foreach \x/\y in {1/2,2/3,3/4,4/5,5/6,6/7,7/8, 6/9}
\draw (C\x)--(C\y);
\draw (D) -- (C1);
\end{scope}
\end{tikzpicture}
\end{center}
There is exactly one critical value $c=\frac{7}{13}$. If $\frac{7}{13}<c\leq 1$, we have $\tilde X^{(c)}= \tilde X$ and 
\[
K_{\tilde X^{(c)}} + B_{\tilde X^{(c)}}\sim_\RR \tilde E_6 + \frac{1}{13} \left(\tilde D+\sum_{1\leq i\leq 5} (i+1)\tilde C_i\right) + \frac{a}{6}\left(6\tilde C_6 +4 \tilde C_{7}   +  2\tilde C_{8}+ 3 \tilde C_9\right)
\]
where $a=\min\{{6\over 11}, c\}$.  If $0<c\leq \frac{7}{13}$, we have $\tilde X^{(c)} = Z$ and
\[
K_{\tilde X^{(c)}} + B_{\tilde X^{(c)}}\sim_\RR  c C_6 + \frac{c}{7} \left(D +\sum_{1\leq i\leq 5} (i+1) C_i\right) + \frac{c}{6}\left(4 \tilde C_{7}   +  2\tilde C_{8}+ 3 \tilde C_9\right)
\]
The formula for the minimal volume is
\begin{equation}\label{eq: II*}
\min \KK^2\left(Z, B_Z^{(c)}; B_Z'\right) = \vol(K_{\tilde X^{(c)}} + B_{\tilde X^{(c)}}) =
\begin{cases}
\frac{1}{42}c^2 & \text{if } c\leq\frac{7}{13} \\
-{11\over 6}c^2 + 2 c - \frac{7}{13}& \text{if } \frac{7}{13}< c\leq \frac{6}{11} \\
{1\over 143} & \text{if } c \geq \frac{6}{11}
\end{cases}
\end{equation}

\noindent{\bf Case: $C$ supports a curve of canonical type $\III^*$.}
The dual graphs of the visible curves on $\begin{tikzcd}W\arrow[r, "\tilde \gamma"]\arrow[rr, bend right,"\rho_W"']& \tilde X\arrow[r, "\rho"] & Z\end{tikzcd}$ are as follows:
\begin{center}
\begin{tikzpicture}[font=\tiny, scale=.7]
\begin{scope}[xshift=3.3cm, scale=1.5]
\node[wbullet, label = below:$D^w$]  (D) at (0,0) {};
\foreach \x in {1,2,3,4,5,6,7}
\node[wbullet, label = below:$C_\x^w$] (C\x) at (\x,0) {};
\node[wbullet, label = right:$C_8^w$] at (4, 1) (C8){};
\foreach \x/\y in {1/2,2/3,3/4,4/5,5/6,6/7}
\draw (C\x)--node[bbullet, label = above: $E_\y$](E\y){}(C\y);
\draw (D) -- node[bbullet, label = above: $E_1$](E1){}(C1);
\draw (C4) -- node[bbullet, label = right: $E_8$](E8){}(C8);
\draw[->] (3,-1) --node[above, sloped]{$\tilde \gamma$} (2.4,-1.6);
\draw[->] (5,-1) --node[above, sloped]{$\rho_W$} (5.6,-1.6);
\end{scope}
\begin{scope}[yshift=-4cm]
\node[wbullet, label = below:$\tilde D$]  (D) at (0,0) {};
\foreach \x in {1,2,3,4,5,6,7}
\node[wbullet, label = below:$\tilde C_\x$] (C\x) at (\x,0) {};
\node[wbullet, label = right:$\tilde C_8$] at (4, 1) (C8){};
\draw (D) --(C1)--(C2)--(C3)--node[bbullet, midway, label = above: $\tilde E_4$](E4){}(C4)--(C5)--(C6)--(C7);
\draw (C4) -- (C8);
\draw[->] (7.5,0) --node[above]{$\rho$} (8.5,0);
\end{scope}
\begin{scope}[xshift=9cm, yshift=-4cm]
\node[wbullet, label = below:$D$]  (D) at (0,0) {};
\foreach \x in {1,2,3,4,5,6,7}
\node[wbullet, label = below:$C_\x$] (C\x) at (\x,0) {};
\node[wbullet, label = right:$C_8$] at (4, 1) (C8){};
\foreach \x/\y in {1/2,2/3,3/4,4/5,5/6,6/7, 4/8}
\draw (C\x)--(C\y);
\draw (D) -- (C1);
\end{scope}
\end{tikzpicture}
\end{center}
There is exactly one critical value $c=\frac{5}{9}$. If $\frac{5}{9}<c\leq 1$, then $\tilde X^{(c)} = \tilde X$ and 
\[
K_{\tilde X^{(c)}}+ B_{\tilde X^{(c)}} \sim_\RR \tilde E_4  + \frac{1}{9}\left(\tilde D + \sum_{1\leq i\leq 3}(i+1)\tilde C_i\right)+ \frac{a}{4}\left( \sum_{4\leq i\leq 7}(8-i)\tilde C_i + 2 \tilde C_8\right),
\]
where $a=\min\{{4\over 7}, c\}$. If $0<c\leq \frac{5}{9}$, then $\tilde X^{(c)} = Z$ and
\[
K_{\tilde X^{(c)}}+ B_{\tilde X^{(c)}} \sim_\RR \frac{c}{5}\left(D + \sum_{1\leq i\leq 3}(i+1)\tilde C_i \right) + \frac{c}{4}\left( \sum_{4\leq i\leq 7}(8-i)\tilde C_i + 2 \tilde C_8\right),
\]
The formula for the minimal volume is
\begin{equation}\label{eq: III*}
\min \KK^2\left(Z, B_Z^{(c)}; B_Z'\right) = \vol(K_{\tilde X^{(c)}} + B_{\tilde X^{(c)}}) =
\begin{cases}
\frac{1}{20}c^2 & \text{if } c\leq {5\over 9} \\
-{7\over 4}c^2 + 2 c - {5\over 9}& \text{if } {5\over 9}< c\leq \frac{4}{7} \\
{1\over 63} & \text{if }  c >{4\over 7}
\end{cases}
\end{equation}

\medskip

\noindent{\bf Case: $C$ supports a curve of canonical type $\IV^*$}.
The dual graphs of the visible curves on $\begin{tikzcd}W\arrow[r, "\tilde \gamma"]\arrow[rr, bend right,"\rho_W"']& \tilde X\arrow[r, "\rho"] & Z\end{tikzcd}$ are as follows:
\begin{center}
\begin{tikzpicture}[font=\tiny, scale=.7]
\begin{scope}[xshift=3.3cm, scale=1.5]
\node[wbullet, label = below:$D^w$]  (D) at (0,0) {};
\foreach \x in {1,2,3,4,5}
\node[wbullet, label = below:$C_\x^w$] (C\x) at (\x,0) {};
\foreach \y in {6, 7}
\node[wbullet, label = right:$C_\y^w$] at (3, \y-5) (C\y){};
\foreach \x/\y in {1/2,2/3,3/4,4/5}
\draw (C\x)--node[bbullet, label = above: $E_\y$](E\y){}(C\y);
\draw (D) -- node[bbullet, label = above: $E_1$](E1){}(C1);
\draw (C3) -- node[bbullet, label = left: $E_6$]{} (C6)-- node[bbullet, label = left: $E_7$]{}(C7);
\draw[->] (2,-1) --node[above, sloped]{$\tilde \gamma$} (1.4,-1.6);
\draw[->] (4,-1) --node[above, sloped]{$\rho_W$} (4.6,-1.6);
\end{scope}
\begin{scope}[yshift=-4cm]
\node[wbullet, label = below:$\tilde D$]  (D) at (0,0) {};
\foreach \x in {1,2,3,4,5}
\node[wbullet, label = below:$\tilde C_\x$] (C\x) at (\x,0) {};
\foreach \y in {6, 7}
\node[wbullet, label = right:$\tilde C_\y$] at (3, \y-5) (C\y){};
\draw (D) --(C1)--(C2)--node[bbullet, midway, label = above: $\tilde E_3$](E3){}(C3)--(C4)--(C5);
\draw (C3) --(C6)--(C7);
\draw[->] (6.5,0) --node[above]{$\rho$} (7.5,0);
\end{scope}
\begin{scope}[xshift=9cm, yshift=-4cm]
\node[wbullet, label = below:$D$]  (D) at (0,0) {};
\foreach \x in {1,2,3,4,5}
\node[wbullet, label = below:$C_\x$] (C\x) at (\x,0) {};
\foreach \y in {6, 7}
\node[wbullet, label = right:$C_\y$] at (3, \y-5) (C\y){};
\foreach \x/\y in {1/2,2/3,3/4,4/5,3/6,6/7}
\draw (C\x)--(C\y);
\draw (D) -- (C1);
\end{scope}
\end{tikzpicture}
\end{center}
In this case the critical value is $c=\frac{4}{7}$. If $\frac{4}{7}< c\leq 1$, then $\tilde X^{(c)} = \tilde X$, and we have
\[
K_{\tilde X^{(c)}} + B_{\tilde X^{(c)}}= \tilde E_3+\frac{1}{7}(\tilde D + 2\tilde C_1 +3 \tilde C_2) +\frac{a}{3}\left (\sum_{3\leq i\leq 5}(6-i) \tilde C_i+ 2 \tilde C_6 + \tilde C_7\right ) 
\]
where $a=\min\{c, {4\over 7}\}$. If $c\leq\frac{4}{7}$, then $\tilde X=Z$, and we have
\[
K_{\tilde X^{(c)}} + B_{\tilde X^{(c)}} =\frac{c}{4}(D + 2 C_1 +3 C_2) + \frac{c}{3}\left (\sum_{3\leq i\leq 5}(6-i)  C_i+ 2  C_6 +  C_7\right ) 
\]
The formula for the minimal volume is
\begin{equation}\label{eq: IV*}
\min \KK^2\left(Z, B_Z^{(c)}; B_Z'\right) = \vol(K_{\tilde X^{(c)}} + B_{\tilde X^{(c)}}) =
\begin{cases}
\frac{1}{12}c^2 & \text{if } c\leq\frac{4}{7} \\
-{5\over 3}c^2 + 2 c -\frac{4}{7}& \text{if } \frac{4}{7} < c\leq \frac{3}{5} \\
{1\over 35} & \text{if } c>\frac{3}{5} 
\end{cases}
\end{equation}

\begin{rmk}
The above cases does not exhaust the possibilities how the $(-2)$-curve $D$ is attached to the curve $C$. But the other choices of attachment would result in configurations of $(-2)$-curves, say $\sG'$,  strictly containing one of the configurations considered above, say $\sG$. Then the resulting minimal volume for $\sG'$ would be larger than that for $\sG$.
\end{rmk}

\section{The semistable part of the boundary divisor}\label{sec: ss}
In this subsection, we study the part of the boundary divisor $B$ that contributes to the geometric genus of a smooth projective log canonical surface $(X, B)$; see Definition~\ref{defn: ss} and Lemma~\ref{lem: pg}.
\begin{defn}
A reduced curve $B$ on a smooth projective surface $X$ is called \emph{semistable} if it has at most nodes as singularities and each of its smooth rational components intersects the other components of $B$ in more than one point. 
\end{defn}

\begin{lem}\label{lem: ss rat}
Let $B$ be a nodal reduced curve on a smooth projective surface $X$. Then $B$ is semistable if and only if $(K_X+B)\cdot B_i\geq 0$ for any smooth rational component $B_i$ of $B$.
\end{lem}
\begin{proof}
Let $B_i$ be a smooth rational component  of $B$. Then 
\[
(K_X+B)\cdot B_i = (K_X+B_i)\cdot B_i + (B-B_i)\cdot B_i = -2  + (B-B_i)\cdot B_i
\] 
Thus $B_i$ intersects the other components of $B$ in more than one point if and only if $(K_X+B)\cdot B_i\geq 0$.
\end{proof}

\begin{lem}\label{lem: ss max}
Let $(X, B)$ be a smooth projective log canonical surface. Then there is a (possibly empty) maximal semistable reduced subcurve $B^\s$ of $\lfloor B\rfloor$, in the sense that any semistable subcurve $B'$ of $\lfloor B\rfloor$ is contained in $B^\s$. 
\end{lem}
\begin{proof}
Since $(X, \lfloor B\rfloor)$ is log canonical, it is well-known that $\lfloor B\rfloor$ is a nodal curve (\cite[Theorem~2.31]{Kol13}). Suppose that $B'$ is a semistable subcurve of $\lfloor B\rfloor$. Any irreducible component $B_i$ of $B$ with $(K_X+B)\cdot B_i <0$ cannot be an irreducible component of $B'$, since otherwise we have by Lemma~\ref{lem: ss rat}
\[
(K_X+B)\cdot B_i \geq (K_X+B')\cdot B_i\geq 0.
\] 
Starting with $B^{(0)}=\lfloor B\rfloor$, we construct inductively $B^{(j+1)} = B^{(j)} - B_j$, where $B_j$ is an irreducible component of $B^{(j)}$ such that $(K_X+B^{(j)})\cdot B_j<0$. We stop at a subcurve $B^\s$ of $B$ such that $(K_X+B^\s)\cdot B_i\geq 0$ for any irreducible component $B_i$ of $B^\s$. Then $B^\s$ is semistable by Lemma~\ref{lem: ss rat}, and, by the above argument, we have $B'\subset B^\s$. Thus $B^\s$ is  the desired  maximal semistable subcurve of $\lfloor B\rfloor$.
\end{proof}

\begin{defn}\label{defn: ss}
Let $(X, B)$ be a projective log canonical surface and $\pi\colon(\tilde X, B_{\tilde X})\rightarrow (X, B)$ the minimal resolution. Let $B_{\tilde X}^\s$ be as in Lemma~\ref{lem: ss max}, and $B_{\tilde X}^\ns:=B_{\tilde X} - B_{\tilde X}^\s$. Then $B^\s:=\pi_*B_{\tilde X}^\s$ and $B^\ns:=\pi_*B_{\tilde X}^\ns $ are called the \emph{semistable part} and the \emph{non-semistable part} of $B$ respectively, and $B=B^\s+ B^\ns$ is called the \emph{semistable} decomposition of $B$.
\end{defn}

\begin{lem}\label{lem: pg}
Let $(X, B)$ be a smooth projective log canonical surface. Let $B =B^\s + B^\ns$ be decomposition into the the semistable part and the non-semistable part. Then the following holds.
\begin{enumerate}[leftmargin=*]
\item The irreducible components of $\lfloor B^\ns\rfloor$ are all smooth rational curves; each connected component of $\lfloor B^\ns\rfloor$ intersects $B^\s$ in at most one point and its dual graph is a tree. 
\item $p_g(X, B) = p_g(X, B^\s)$.
\end{enumerate}
 \end{lem}
\begin{proof}
(i)  By the maximality of $B^\s$, any irreducible component subcurve $B_i$ of $\lfloor B\rfloor$ with $p_a(B_i)>0$ is contained in $B^\s$, so the irreducible components of $\lfloor B^\ns\rfloor$ are smooth rational curves. 

Suppose that there is a cycle of smooth rational curves $B'\subset\lfloor B\rfloor$. Then $(K_X+B')\cdot B_i=0$ and thus $B'$ is semistable by Lemma~\ref{lem: ss rat}. By the maximality of $B^\s$, we have $B'\subset B^\s$. It follows that the dual graph of each connected component of  $\lfloor B^\ns\rfloor$ is a tree.

Suppose on the contrary that there is  a connected component $B''$ of $\lfloor B^\ns\rfloor$ such that $B''\cdot B^\s=2$. Then one can find a chain of irreducible components $B_i$ of $B''$ with $1\leq i\leq r$ such that $B_i\cdot B_{i+1}=1$ for $1\leq i\leq r-1$, and $B_1\cdot B^\s$ and $B_r\cdot B^\s$ are both positive. But then $B^\s+\sum_{1\leq i\leq r} B_i$ is semistable, contradicting the maximality of $B^\s$.

(ii) By definition, we have $p_g(X, B) = p_g(X,\lfloor B\rfloor)$. If $B^\s = \lfloor B\rfloor$ then there is nothing to prove. So we assume that $\lfloor B \rfloor -B^\s>0$. Let $E$ be an irreducible component of $\lfloor B \rfloor -B^\s$, which is necessarily a smooth rational curve by (i). 

Consider the exact sequence of cohomology groups
\[
0\rightarrow H^0(X, K_X+ \lfloor B \rfloor-E)\rightarrow H^0(X, K_X+ \lfloor B \rfloor) \rightarrow H^0(E,(K_X+ \lfloor B \rfloor)|_E) 
\]
Since $$\deg (K_X+ \lfloor B \rfloor)|_E =  (K_X+ \lfloor B \rfloor)\cdot E = -2 + (\lfloor B \rfloor -E)\cdot E <0,$$
we have $H^0(E,(K_X+ \lfloor B \rfloor)|_E)=0$ and hence $$H^0(X, K_X+ \lfloor B \rfloor-E) \cong H^0(X, K_X+ \lfloor B \rfloor).$$ Since $\lfloor B \rfloor-E$ and $\lfloor B \rfloor$ have the same semistable part, namely $B^\s$, we finish the proof by induction on the number of irreducible components of $\lfloor B \rfloor -B^\s$.
\end{proof}

Now we describe the behavior of the semistable decomposition of the boundary divisor under blow-ups. 
\begin{lem}\label{lem: bir ss}
Let $(X, B_{X})$ be a smooth projective log canonical surface, and 
\[
\rho\colon U=X_n \xrightarrow{\rho_n} X_{n-1} \xrightarrow{\rho_{n-1}}\cdots \xrightarrow{\rho_2} X_1\xrightarrow{\rho_1} X_0 = X
\]
is a sequence of blow-ups. For each $1\leq i\leq n$,  let $B_{X_i}$ be the $\RR$-divisor on $X_i$ such that $(X_i, B_{X_i})$ is a crepant higher model of $(X, B_Z)$. Let 
\[
B_{X_i}^{>0} = B_{X_i}^{\s} + B_{X_i}^{\ns}
\]
be the decomposition of $B_{X_i}^{>0}$ into the semistable part and the non-semistable part.  Then the following holds.
\begin{enumerate}[leftmargin=*]
\item $\supp(B_{U}^{\s})\cap\, \supp(B_{U}^{<0}) = \emptyset$.
\item $\rho_* B_{U}^{\s} = B_{X}^{\s}, \hspace{.2cm} \rho_* B_{U}^{\ns} = B_{X}^{\ns}$.
\item For $1\leq i\leq n$, let $p_i\in X_{i-1}$ be the point blown up by $\rho_i$, and $E_i\subset X_i$ the exceptional $(-1)$-curve over $p_i$.  Let $\Lambda:=\{i\mid 0\leq i\leq n, p_i\notin \supp(B_{X_{i-1}}^\s)\}$, and $\sE_i$ is the total transform of $E_i\subset X_i$ on $U$. Then
\[
\disp K_{U}+B_{U}^{\s}  = \rho^*(K_{X}+B_{X}^\s) + \sum_{i\in \Lambda} \sE_i.
\] 
\item $\supp(B_{U}^{\s}) \cap\, \supp(B_{U}^{\ns}) \neq \emptyset$ if and only if $\supp(B_{X}^{\s}) \cap\, \supp(B_{X}^{\ns}) \neq \emptyset$.
\end{enumerate}
\end{lem}
\begin{proof}

We  show by induction on $i$ that the following statements $\text{(a)}_i-\text{(d)}_i$ hold, from which the lemma follow immediately by induction:
\begin{enumerate}[leftmargin=*]
\item[$\text{(a)}_i$] $\supp(B_{X_i}^{\s})\cap\, \supp(B_{X_i}^{<0}) = \emptyset$;
\item[$\text{(b)}_i$] $\rho_{i*} B_{X_i}^{\s} = B_{X_{i-1}}^{\s}, \hspace{.1cm} \rho_{i*} B_{X_i}^{\ns} = B_{X_{i-1}}^{\ns}$, \hspace{.1cm} $\rho_{i*} B_{X_i}^{<0} = B_{X_{i-1}}^{<0}$;
\item[$\text{(c)}_i$]
$K_{X_i}+B_{X_i}^{\s} = 
\begin{cases}
\rho_i^*(K_{X_{i-1}}+B_{X_{i-1}}^\s)   &\text{if $p_{i}\in B_{X_{i-1}}^\s$}, \\
\rho_i^*(K_{X_{i-1}}+B_{X_{i-1}}^\s) + E_i &\text{if $p_i\notin B_{X_{i-1}}^\s$};
\end{cases}
$
\item[$\text{(d)}_i$]  $\supp(B_{X_i}^{\s}) \cap\, \supp(B_{X_i}^{\ns}) \neq \emptyset$ if and only if $\supp(B_{X_{i-1}}^{\s}) \cap\, \supp(B_{X_{i-1}}^{\ns}) \neq \emptyset$.
\end{enumerate}

Note that $\supp(B_{X_0}^{\s})\cap\, \supp(B_{X_0}^{<0}) = \emptyset$ is trivially true, since $(X_0, B_{X_0})$ is log canonical and hence $B_{X_0}^{<0}=0$. For $i\geq 1$, we can assume by induction that $\supp(B_{X_{i-1}}^{\s})\cap\, \supp(B_{X_{i-1}}^{<0}) = \emptyset$.

We distinguish two cases depending on whether the blown-up point $p_i\in X_{i-1}$ lies on $\supp(B_{X_{i-1}}^\s)$ or not.

\medskip

\noindent{\bf Case 1.} Suppose that $p_i\in \supp(B_{X_{i-1}}^\s)$. Then $p_i\notin \supp(B_{X_{i-1}}^{<0})$ by the induction hypothesis. 
Now we can check directly that
\begin{equation}
1\geq \mult_{E_i} B_{X_i} = \mult_{p_i} B_{X_{i-1}} -1 =\mult_{p_i} B_{X_{i-1}}^\s + \mult_{p_i} B_{X_{i-1}}^\ns -1 \geq 0, 
\end{equation}
where the first inequality is because $(X_i, B_{X_i} )$ is sub-log canonical.  If $p_i\in \supp(B_{X_{i-1}}^\s)$ is a node, then $\mult_{p_i} B_{X_{i-1}}^\ns = 0$, $E_i$ intersects the strict transform $\rho^{-1}_*B_{X_{i-1}}^\s$ at two points, and
 \begin{equation}\label{eq: p node1}
B_{X_i}^\s =  \rho_*^{-1}B_{X_{i-1}}^\s + E_i= \rho^*B_{X_{i-1}}^\s - E_i 
\end{equation}
If $p_i\in \supp(B_{X_{i-1}}^\s)$ is a smooth point, then the connected components of $E_i+\lfloor \rho_*^{-1} B_{X_{i-1}}^\ns\rfloor$ are again trees of smooth rational curves (cf.~Lemma~\ref{lem: pg}), each intersecting $\rho^{-1}_*B_{X_{i-1}}^\s$ at \emph{less than} two points.  It follows that $E_i$ is not a component of $B_{X_{i}}^\s$, and we have 
\begin{equation}\label{eq: p node2}
B_{X_i}^\s =\rho_*^{-1}B_{X_{i-1}}^\s =  \rho^*B_{X_{i-1}}^\s - E_i 
\end{equation} 

\medskip

\noindent{\bf Case 2.} Suppose that $p_i\notin \supp(B_{X_{i-1}}^\s)$.  One checks as before that $E_i$ is not a component of $B_{X_i}^\s$, and
\begin{equation}\label{eq: p not on B}
B_{X_i}^\s =\rho_*^{-1}B_{X_{i-1}}^\s = \rho^*B_{X_{i-1}}^\s.
\end{equation} 

In both cases, we have
\begin{equation}
\supp(B_{X_i}^\s)\cap\, \supp(B_{X_i}^{<0}) \subset \rho^{-1}\left (\supp(B_{X_{i-1}}^\s) \cap\, \supp(B_{X_{i-1}}^{<0}) \right)= \emptyset
\end{equation}
where the equality is because of the induction hypothesis.  Thus $\supp(B_{X_i}^\s)\cap\, \supp(B_{X_i}^{<0}) =\emptyset$, which is $\text{(a)}_i$.  

Also we have $\rho_{i*}B_{X_i}^\s = B_{X_{i-1}}^\s$. It follows directly from the definition that $\rho_{i*}B_{X_i}^{>0} = B_{X_{i-1}}^{>0}$ and $\rho_{i*}B_{X_i}^{<0} = B_{X_{i-1}}^{<0}$, and hence
\[
 \rho_{i*}B_{X_i}^\ns = \rho_{i*}B_{X_i}^{>0}  - \rho_{i*}B_{X_i}^\s  = B_{X_{i-1}}^{>0} - B_{X_{i-1}}^\s = B_{X_{i-1}}^\ns.
\]
This proves $\text{(b)}_i$. 

Combining \eqref{eq: p node1}, \eqref{eq: p node2}, and \eqref{eq: p not on B} with the fact that $K_{X_i} = \rho_i^*K_{X_{i-1}} + E_i$, one obtains $\text{(c)}_i$:
\[
K_{X_i}+B_{X_i}^{\s} = 
\begin{cases}
\rho^*(K_{X_{i-1}}+B_{X_{i-1}}^\s)   &\text{if $p_{i}\in B_{X_{i-1}}^\s$}, \\
\rho^*(K_{X_{i-1}}+B_{X_{i-1}}^\s) + E_i &\text{if $p_i\notin B_{X_{i-1}}^\s$}. 
\end{cases}
\] 

Now we prove $\text{(d)}_i$: Suppose that there is a point $p\in\supp(B_{X_{i-1}}^{\s}) \cap\, \supp(B_{X_{i-1}}^{\ns})$. Then $p\notin \supp(B_{X_{i-1}}^{<0})$ by $\text{(a)}_i$, and $p$ is not a node of $B_{X_{i-1}}^\s$. If $p=p_i$ is blown up by $\rho_i$, then 
\[
\mult_{E_i} B_{X_i} = \mult_{p} B_{X_i}^\s + \mult_{p} B_{X_i}^\ns -1 = \mult_{p} B_{X_i}^\ns >0.
\]
Hence $\supp(B_{X_i}^\ns) = \rho_i^*\supp(B_{X_{i-1}}^\ns)$ and $B_{X_i}^\s = \rho_*^{-1}B_{X_{i-1}}^\s$, which have nonempty intersection.
For the other implication, suppose on the contrary that there is a point $q\in\supp(B_{X_i}^{\s}) \cap\, \supp(B_{X_i}^{\ns})$ while $\supp(B_{X_{i-1}}^{\s}) \cap\, \supp(B_{X_{i-1}}^{\ns}) =\emptyset$. Then the connected component $G$ of $\supp(B_{X_i}^{\ns})$ containing $q$ is necessarily contracted by $\rho_i$. This can only happen if $G=E_i$, $p_i=\rho_i(q)\not\in\supp(B_{X_{i-1}}^{\ns})$, and $p_i\in \supp(B_{X_{_i}}^{\s})$ is smooth point. But then, as the computation in Case 1 above shows, $\mult_{E_i} B_{X_i} =0$ and hence $q\notin \supp(B_{X_i}^{\ns})$, which is a contradiction.
\medskip
\end{proof}
\begin{rmk}
By Lemma~\ref{lem: bir ss}, given a projective log canonical surface $(X, B_X)$ and an effective higher model $(U, B_U, \rho)$, we have $\rho_*B_U^\s =B^\s$ and $\rho_*B_U^\ns =B^\ns$. Thus the system of $\RR$-divisors $\{B_U^\s\}_U$ (resp.~$\{B_U^\ns\}_U$) form a so-called $\rmb$-$\RR$-divisor.
\end{rmk}

\begin{cor}\label{cor: bir ss}
Let $(X, B_X)$ be a smooth projective log canonical surface, and $\rho\colon (U, B_U)\rightarrow (X, B_X)$ a crepant resolution. Let $B_U^\s$ and $B_X^\s$ be semistable parts of $B_U^{>0}$ and $B_X$ respectively.  Then, for any $m\in \ZZ_{\geq 0}$, we have $\dim H^0(U, m(K_U+B_U^\s)) = \dim H^0(X, m(K_X+B_X^\s))$. In particular, $p_g(U, B_U^\s) = p_g(X, B_X^\s)$ and $\kappa(K_U+B_U^\s)= \kappa(K_U+B_U^\s)$.
\end{cor}
\begin{proof}
By Lemma~\ref{lem: bir ss} (ii) and (iii), we have $\rho_*(K_U+B_U^\s) = K_X+B_X^\s$, and $K_U+B_U^\s- \rho^*(K_X+B_X^\s)$ is effective $\RR$-divisor supported on $\Exc(\rho)$. Thus there is a natural map $ \rho^*\colon H^0(X, m(K_X+B_X^\s)) \rightarrow H^0(U, m(K_U+B_U^\s))$ which is easily seen to be an isomorphism for any positive integer $m$. 
\end{proof}

\section{Necessary conditions for realizing the minimal volumes}\label{sec: nec cond}
In this section we pin down several necessary conditions  a projective log canonical surface with prescribed geometric genus should satisfy, in order that it achieves the minimal possible volume. 

\subsection{Decreasing the volume inside $\sS(\sC, p_g; \kappa)$}\label{sec: decrease}
In this subsection we present several conditions, which forbid a log surface $(X, B)\in \sS(\sC, p_g; \kappa)$ to realize the minimal volume. This is shown by constructing another log surface $(X', B')\in \sS(\sC, p_g; \kappa)$ with smaller volume.

We need to set up the notation before giving the exact statements.
\begin{nota}\label{nota: triangle}
 For a projective log canonical surface $(X, B)$, we consider the following diagram:
\begin{equation}\label{eq: triangle}
\begin{split}
\begin{tikzpicture}
\node (tX) at (0,0) {$({\tilde X}, B_{\tilde X})$};
\node (X)[below left=1cm of  tX] {$(X, B)$};
\node (Z)[below right=1cm of  tX] {$(Z,B_Z)$};
\draw[->](tX.south west)--(X) node[above, midway]{$\pi$};
\draw[->](tX.south east)--(Z)node[above, midway]{$\rho$};
\end{tikzpicture} 
\end{split}
\end{equation}
where $\pi\colon (\tilde X, B_{\tilde X} )\rightarrow (X, B)$ is the minimal resolution, and $\rho\colon \tilde X\rightarrow Z$ is a minimal model program of the divisor $K_{\tilde X} + B_{\tilde X}^\s$, contracting successively the $(-1)$-curves that are disjoint from (the images of) $B_{\tilde X}^\s$, the semistable part of $B_{\tilde X}$.  Set $B_Z=\rho_*B_{\tilde X}$ and $B_Z'=\rho_* B_{\tilde X}^\s$. Then the birational morphism $\rho$ is an isomorphism over a neighborhood of $\supp(B_Z')$, but $(Z, B_Z)$ may have worse than log canonical singularities at $\supp(B_Z)\setminus\supp(B_Z')$. Therefore, we have $(\tilde X, B_{\tilde X}, \rho)\in \sS(Z, B_Z; B_Z')$, as in Notation~\ref{nota: SZBZ}.  The following relations are clear by Lemma~\ref{lem: pg}
\begin{equation}
p_g(X, B) = p_g(\tilde X, B_{\tilde X}^\s) = p_g(Z, B_Z') \text{ and }\kappa(K_{\tilde X} + B_{\tilde X}^\s) = \kappa(K_Z+B_Z').
\end{equation}
\begin{nota}\label{nota: fix pg k}
Let $\sC\subset(0,1]$ be a subset, $p_g$ a non-negative integer and $\kappa\in\{-\infty, 0,1,2\}$. Recall from the introduction that $\sS(\sC)$ consists of projective log canonical surfaces $(X, B)$ of general type with $\sC_B\subset \sC$, and $\sS(\sC, p_g)$ consists of those $(X, B) \in  \sS(\sC)$ with $p_g(X, B)=p_g$. Now for $\kappa\in\{-\infty, 0,1,2\}$, define
\[
\sS(\sC, p_g;\kappa) = \{(X, B) \in \sS(\sC, p_g) \mid \kappa(K_{\tilde X}+B_{\tilde X}^\s)=\kappa\}, 
\] 
where $(\tilde X, B_{\tilde X})\rightarrow (X, B)$ is the minimal resolution.  Correspondingly, we denote the set of volumes of log surfaces from  $\sS(\sC, p_g;\kappa)$ by
\[
\KK^2(\sC, p_g;\kappa):=\{\vol(K_X+B) \mid (X, B)\in\sS(\sC, p_g;\kappa)\}
\]
\end{nota}

\begin{rmk}
We have two easy observations:
\begin{enumerate}[leftmargin=*]
\item $\sS(\sC, p_g;\kappa)=\emptyset$ if $p_g>0$ and if $\kappa=-\infty$.
\item If $\sC'\subset\sC\subset(0,1]$ then $\sS(\sC', p_g; \kappa) \subset \sS(\sC, p_g;\kappa)$.
\end{enumerate}
\end{rmk}
\end{nota}

Now we can give the first criteria for non-minimal volumes.
\begin{lem}\label{lem: smaller1}
Let $\sC\subset(0,1]$ be a subset, $p_g$ a non-negative integer, and $\kappa\in\{-\infty, 0,1,2\}$. Suppose that $(X, B)\in\sS(\sC, p_g;\kappa)$ has ample $K_X+B$ and let $\pi\colon (\tilde X, B_{\tilde X})\rightarrow (X, B)$ be the minimal resolution. If one of the following conditions holds:
\begin{enumerate}[leftmargin=*]
\item[(a)] $\supp(B_{\tilde X}^\s)\cap\, \supp(B_{\tilde X}^\ns)\neq \emptyset$,
\item[(b)] $\supp(B_{\tilde X}^\ns)$ contains a nonklt center of $(X, B)$,
\end{enumerate} 
then there is another projective log canonical  surface $(X', B') \in\sS(\sC_B, p_g;\kappa)$ such that $\vol(K_{X'}+B') <\vol(K_X+B)$.
\end{lem}
\begin{proof}
The construction of the log surface $(X', B')$ runs along the same line as \cite[Theorem~3.3]{AL19b}, but here one needs to check additionally that the newly constructed $(X', B')$ has the same $p_g$ and $\kappa$ as $(X,B)$. 
 
 Let $\alpha\colon (U, B_U)\rightarrow (X, B)$ be an effective crepant resolution. By possibly blowing up points on $\lfloor B_U\rfloor$ further, we may assume that $\supp(B_U)$ has normal crossing in a neighborhood of $\lfloor B_U\rfloor$.  Since $\pi\colon \tilde X\rightarrow X$ is the minimal resolution, there is a birational morphism $\tilde \alpha\colon U\rightarrow \tilde X$ such that $\pi\circ\tilde \alpha = \alpha$. By Lemma~\ref{lem: bir ss}, the condition (a) (resp.~(b)) still holds with $(\tilde X, B_{\tilde X})$ replaced by $(U, B_U)$; moreover, under the condition (b),  one can require that $\lfloor B_{U}^\ns\rfloor \neq 0$ by blowing up $U$ further, if needed. In both cases, there is an irreducible component $B_{U,2}$ of $\lfloor B_{U}\rfloor$, intersecting some other irreducible component $B_{U,1}$ that is contained in $\supp(B_{U}^{\ns})$.  In particular, $B_{U,2}$ is an accessible nonklt center of $(U, B_U)$. 
 
 Take a point $p\in B_{U,2} \cap\, B_{U,1} $. Blow up $p\in U$ and then its pre-images on the strict transforms of $B_{U,2}$ on the blown-up surfaces. Let $\beta\colon  (W, B_W)\rightarrow (U, B_U)$ be the resulting crepant effective resolution after $n$ such blow-ups, shown by dual graphs as follows:
 \begin{center}
  \begin{tikzpicture}[font=\tiny]
    \begin{scope}
\node [inner sep=0pt, label = below:$\beta^{-1}_*B_{U,1}$](BU1) at (0,0) {$\otimes$};
\node[wbullet, label = below:$E_1$,  label = above:$(-2)$] (E1) at (1,0) {};
\node[wbullet, label = below:$E_{2}$,  label = above:$(-2)$] (E2) at (2,0) {};
\node[wbullet, label = below:$E_{n-1}$,  label = above:$(-2)$] (En-1) at (4,0) {};
\node[bbullet, label = below:$E_n$,  label = above:$(-1)$] (En) at (5,0) {};
\node [inner sep=0pt, label = below:$\beta^{-1}_*B_{U,2}$](BU2) at (6,0) {$\otimes$};
\draw (BU1)--(E1)--(E2);
      \draw[dashed] (E2)--(En-1);
      \draw (En-1)--(En)--(BU2);
      \draw[->] (6.5,0)--node[above]{$\beta$}(7.5,0);
          \end{scope}
          \begin{scope}[xshift=8cm]
          \node [inner sep=0pt, label = below:$B_{U,1}$](BU1) at (0,0) {$\otimes$};
          \node [inner sep=0pt, label = below:$B_{U,2}$](BU2) at (1,0) {$\otimes$};
          \draw (BU1)--(BU2);
          \end{scope}
  \end{tikzpicture}
  \end{center}
where the $E_i$ ($1\leq i\leq n$) denote the exceptional curves of $\beta$, and the numbers in the brackets above the vertices are the self-intersection numbers of the corresponding curves.  By the construction of $\beta$, one sees easily that $ B_{W}=\beta^{-1}_* B_{U}+ a \sum_{1\leq i\leq n} E_i$, where $a\in (0,1]$ is the coefficient of $B_{U,1}$ in $B_U$.  Decreasing the coefficients of the exceptional divisors $E_i$ in $B_W$, we set 
\begin{equation}\label{eq: BU'}
B_W'=B_{W} - a\sum_{1\leq i\leq n} \frac{i}{n} E_i =  \beta^{-1}_* B_{U}+ a\sum_{1\leq i\leq n-1}\left(1 -  \frac{i}{n}\right)E_i. 
\end{equation}
Since $K_{W}+B_{W} = \beta^*\alpha^*(K_X+B)$ is big and nef, and $B_W'<B_{W} $, we have by Lemma~\ref{lem: vol ZD}
\begin{equation}\label{eq: smaller}
\vol (K_{W}+B_W') < \vol(K_{W}+B_{W}) = \vol(K_X+B)
\end{equation}
Taking $n$ large enough, for example, $n>\frac{a^2}{\vol(K_X+B)}$, we have
\begin{equation}\label{eq: >0}
\begin{aligned}
\left(K_{W}+B_W'\right)^2&= \left(K_{W}+B_{W} - a\sum_{1\leq i\leq n} \frac{i}{n} E_i\right)^2\\
&=\left(K_{W}+B_{W}\right) ^2 + a^2\left(\sum_{1\leq i\leq n} \frac{i}{n} E_i\right)^2\\
&=\vol(K_X+B) -\frac{a^2}{n} >0
\end{aligned}
\end{equation}
Together with the fact that $\beta_*(K_{W}+B_W') = K_{U}+B_{U}$ is big,  \eqref{eq: >0} implies that $K_{W}+B_W'$ is big for $n>\frac{a^2}{\vol(K_X+B)}$. 

Let $\pi'\colon (W, B_W') \rightarrow (X', B')$ be the contraction onto  the ample model.
By \eqref{eq: smaller}, we have
\[
0< \vol(K_{X'}+B') = \vol (K_W+B_W') <  \vol(K_X+B).
\]

Next we check that $(X', B')\in\sS(\sC, p_g;\kappa)$. 

Write $\gamma=\alpha\circ \beta\colon W\rightarrow X$. As one checks above, $(K_W+B_W') \cdot E_i =0$ holds for $1\leq i\leq n-1$, while for any $\gamma$-exceptional curve $F$ other than the $E_i, 1\leq i\leq n$, we have $(K_W+B_W') \cdot  F\leq 0$.
It follows that all $\gamma$-exceptional curves except for $E_n$ are contracted to points on the ample model $(X', B')$. Note also that the coefficient of $E_n$ in $B_W'$ is $0$. Therefore, the coefficients of $B'$ are among those of $B$.

The semistable parts of $B_{W}$ and $B_W'$ are the same, which is denoted by $B_{W}^\s$. It follows from Lemma~\ref{lem: pg} and Corollary~\ref{cor: bir ss} that $$p_g(X',B') = p_g(W, B_W') =p_g(W, B_{W}^{s}) =p_g(U, B_{U}^\s)=p_g$$ and $$ \kappa(K_{W}+ B_{W}^{s})=\kappa(K_{U}+B_{U}^\s)=\kappa.$$ 

In conclusion, we have constructed under one of the conditions (a) and (b) a projective log canonical surface $(X', B')\in\sS(\sC_B, p_g;\kappa)$ and it has a smaller volume than $(X,B)$.
\end{proof}

\begin{lem}\label{lem: smaller2}
Let $\sC\subset(0,1]$ be a subset such that $\min(\sC\cup\{1\})$ attains the minimum, say $c$. Then, for $p_g\in \ZZ_{\geq 0}$ and $\kappa\in\{0,1,2\}$, the following holds.
\begin{enumerate}
\item Suppose that $(X, B)\in \sS(\sC, p_g; \kappa)$ has ample $K_X+B$, and let $\pi\colon(\tilde X, B_{\tilde X})\rightarrow (X, B)$ be the minimal resolution. If one of the following conditions (a) and (b) holds:
\begin{enumerate}
\item[(a)] $\max \sC_{B_{\tilde X}^\ns} > c$;
\item[(b)] there is an irreducible component $\tilde B_0$ of $B_{\tilde X}^\ns$, say, with coefficient $b_0>0$, such that $K_{\tilde X}+B_{\tilde X}-b_0\tilde B_0$ is still big;
\end{enumerate}
then there is another log surface $(X', B')\in \sS(\{c,1\}\cap \sC_B, p_g; \kappa)$ such that $K_{X'}+B'$ is ample and $\vol(K_{X'}+B')<\vol (K_X+B_X)$:
\item The volume set $\KK^2(\sC, p_g; \kappa)$ attains the minimum, and we have $\min\KK^2(\sC, p_g; \kappa) =\min \KK^2(\{c,1\}\cap\sC, p_g; \kappa)$.
\end{enumerate}
\end{lem}
\begin{proof}
(i) Under either condition, we will define a boundary divisor $\tilde B$ strictly smaller than $B_{\tilde X}$ and then take $(X', B')$ to be ample model of $(\tilde X, \tilde B)$.

We first deal with the condition (a). Write $B_{\tilde X} = \sum_{j\in J} b_j \tilde B_j$ as the sum of distinct prime divisors $\tilde B_j$, and let $J^\ns\subset J$ be the index set for the components $\tilde B_j$ contained in $B_{\tilde X}^\ns$. Now define a new boundary divisor on $\tilde X$ as follows:
\[
\tilde B := B_{\tilde X}^\s + \sum_{j\in J^\ns} \min\{b_j, c\} \tilde B_j.
\]
Since $\kappa(K_{\tilde X}+B_{\tilde X}^\s)\geq 0$,  $K_{\tilde X} + \tilde B$ is still big by Lemma~\ref{lem: vol ZD} (iii). Let $\pi'\colon (\tilde X, \tilde B)\rightarrow (X', B')$ be the ample model. Under the condition (a), $K_{\tilde X} +\tilde B$ is strictly smaller than the big and nef $K_{\tilde X}+B_{\tilde X}$, and thus $\vol(K_{X'} + B')  = \vol(K_{\tilde X} +\tilde B)< \vol(K_{\tilde X}+B_{\tilde X})=\vol(K_X+B)$ by Lemma~\ref{lem: vol ZD} (ii). By construction we have $\sC_{\tilde B}\subset(\sC_{B_{\tilde X}})_{<c}\cup \{c, 1\}$. The components $\tilde B_j, j\in J$ contracted by contracted by $\pi\colon \tilde X\rightarrow X$ are  necessarily contracted by $\pi'$, since $(K_{\tilde X} + \tilde B)\cdot \tilde B_j \leq (K_{\tilde X} + B_{\tilde X})\cdot \tilde B_j  = 0$. Now the components $\tilde B_j$ in $B_{\tilde X}^\ns$ with $b_j<c$ are contracted by $\pi$. Therefore,  $\sC_{B'} \subset  \sC_B \cap \{c,1\}$. Since the semistable part of $\tilde B$ and $B_{\tilde X}$ are the same, namely $B_{\tilde X}^\s$, we  infer that $(X', B')\in\sS(\{c,1\}\cap \sC_B, p_g;\kappa)$. 

Under the condition (b), we define $\tilde B=B_{\tilde X}-b_0\tilde B_0$. The arguments are similar to (and easier than) those under condition (a).

(ii) we have $\inf \KK^2(\sC, p_g; \kappa) \geq \inf\KK^2(\{c,1\}\cap\sC, p_g; \kappa)$ by (i). On the other hand, the inclusion $\KK^2(\{c,1\}\cap\sC, p_g; \kappa) \subset \KK^2(\sC, p_g; \kappa)$ implies that $\inf \KK^2(\sC, p_g; \kappa) \leq \inf\KK^2(\{c,1\}\cap\sC, p_g; \kappa)$. Therefore, $\inf \KK^2(\sC, p_g; \kappa) = \inf\KK^2(\{c,1\}\cap\sC, p_g; \kappa)$. By \cite{Ale94}, the latter attains the minimum. It follows that $\KK^2(\sC, p_g; \kappa)$ also attains the \emph{same} minimum.
\end{proof}

\subsection{Necessary conditions for realizing the minimal volumes}
Based on the criteria for non-minimal volumes in Section~\ref{sec: decrease}, we can now formulate several necessary conditions for realizing the minimal volumes inside $\sS(\sC, p_g; \kappa)$ with $\kappa\geq 0$. A notable one is the separation of the semistable part of the boundary divisor from the non-semistable part. This enables us, in searching for the minimal volume, to operate as if we were always on a surface with non-negative Kodaira dimension. 

\begin{prop}\label{prop: nec cond}
Let $\sC\subset(0,1]$ be a subset, possibly empty, such that $\sC\,\cup\,\{1\}$ attains the minimum, say $c$. Let $p_g\in\ZZ_{\geq 0}$, $\kappa\in\{0,1,2\}$ be two nonnegative integers. Suppose that $(X, B)\in\sS(\sC, p_g;\kappa)$ has ample $K_X+B$ and $\vol(K_X+B)=\min \KK^2(\sC, p_g;\kappa)$, and let $\pi\colon(\tilde X, B_{\tilde X})\rightarrow (X, B)$ be the minimal resolution. Then the following holds.
\begin{enumerate}[leftmargin=*]
\item $(\tilde X, B_{\tilde X})$ has klt singularities in a neighborhood of $\supp(B_{\tilde X}^{\ns})\subset \tilde X$. In particular, $\supp(B_{\tilde X}^\s)$ and $\supp(B_{\tilde X}^\ns)$ are disjoint. 
\item There is an inclusion $\sC_{B_{\tilde X}^\ns}\subset(0, c]_{<1}$. 
\item If $\tilde B_0$ is irreducible component  of $B_{\tilde X}^\ns$, say, with coefficient $b_0>0$, then $K_{\tilde X}+B_{\tilde X}-b_0\tilde B_0$ is not big.
\item There is an inclusion $\sC_B \subset\{c, 1\}\cap \sC$.
\end{enumerate} 
\end{prop}
\begin{proof}
\noindent (i) Suppose on the contrary that $(\tilde X, B_{\tilde X})$ is not klt along $\supp(B_{\tilde X}^{\ns})$. Then one of the conditions (a) and (b) of Lemma~\ref{lem: smaller1} holds, and this would violate the minimality of $\vol(K_X+B)$.

\medskip

\noindent(ii) we have $\sC_{B_{\tilde X}^{\ns}}\subset(0,1)$ by (i) and $\sC_{B_{\tilde X}^{\ns}}\subset (0,c]$ by Lemma~\ref{lem: smaller2} (ia).  (ii) is proved by combining these two inclusions.

\medskip

\noindent(iii) is a direct consequence of  Lemma~\ref{lem: smaller2} (ib).

\medskip

\noindent(iv) We have $\sC_{B} \subset ((\sC_{B_{\tilde X}})_{\leq c} \cup \{1\})\cap\, \sC \subset \{c, 1\}\cap\, \sC$, where the first inclusion is by (ii), and the second inclusion is because $c=\min (\sC\cup\{1\})$. 
\end{proof}

\begin{cor}\label{cor: pro min Z}
Keep the notation and assumptions of Proposition~\ref{prop: nec cond}. Let $\rho\colon  (\tilde X, B_{\tilde X}^\s)\rightarrow (Z, B_Z')$ be the $(K_{\tilde X}+B_{\tilde X}^\s)$-minimal model as in \eqref{eq: triangle}, with
\[
B_Z'=\rho_* B_{\tilde X}^\s, \, B_Z''=\rho_* B_{\tilde X}^\ns, \, \text{and } \, B_Z=\rho_* B_{\tilde X}
\]
Then the following holds.
\begin{enumerate}[leftmargin=*]
\item $(Z, B_Z')$ has log canonical singularities, and $\supp(B_Z')\cap\, \supp(B_Z'')=\emptyset. $
\item There is an inclusion $\sC_{B_Z''}\subset(0, c]_{<1}$. 
\item If $B_{Z, 0}$ is an irreducible component of $B_{Z}^\ns$, say, with coefficient $b_0>0$, then $K_{Z}+B_{Z}-b_0B_{Z, 0}$ is not big.
\item $B_{\tilde X}^\ns$ is the strict transform of $B_Z''$.
\end{enumerate} 
\end{cor}
\begin{proof}
(i) is clear from the corresponding property of $(\tilde X, B_{\tilde X}^\s + B_{\tilde X}^\ns)$ as in Proposition~\ref{prop: nec cond} and from the fact that $\rho\colon \tilde X\rightarrow Z$ is an isomorphism over a neighborhood of $B_Z'$. 

(ii) follows from Proposition~\ref{prop: nec cond} (ii), since $\sC_{B_Z''}\subset \sC_{B_{\tilde X}^\ns}$.

(iii) follows from Proposition~\ref{prop: nec cond} (iii), since the bigness of $K_{Z}+B_{Z}-b_0B_{Z, 0}$ would imply that of $K_{\tilde X}+B_{\tilde X}-b_0\tilde B_0$ by Lemma~\ref{lem: min fix surf}, where $\tilde B_0\subset\tilde X$ is the strict transform of $B_{Z, 0}$.

(iv) As noticed in Notation~\ref{eq: triangle}, we have $(\tilde X, B_{\tilde X}, \rho)\in \sS(Z, B_Z; B_Z')$. By Lemma~\ref{lem: min fix surf}, $K_{\tilde X} + B_{\tilde X}^\s + \rho_*^{-1}B_Z''$ is big. By Proposition~\ref{prop: nec cond}, we infer that $B_{\tilde X}^\ns = \rho_*^{-1}B_Z''$.
\end{proof}

\section{Lower bounds of $\KK^2(\sC, p_g)$}\label{sec: lower bound}


In this section we fix a (possibly empty) coefficient set $\sC\subset(0,1]$ such that $\sC\cup\{1\}$ attains the minimum, say $c$, and a nonnegative integer $p_g$. Our aim is to uniformly bound  $\vol(K_X+B)$ for $(X, B)\in \sS(\sC, p_g)$ from below. Note that, by taking the ample model of $(X, B)$, we can assume that $K_X+B$ is ample. If $p_g>0$, the set $\sS^2(\sC, p_g)$ can be decomposed as follows (see Notation~\ref{nota: fix pg k}):
\[
\sS^2(\sC, p_g) = \bigcup_{0\leq \kappa \leq 2} \sS^2(\sC, p_g; \kappa).
\]
Thus it suffices to give, separately for $0\leq \kappa \leq 2$, the lower bounds of $\vol(K_X+B)$ for $(X, B)\in \sS^2(\sC, p_g; \kappa)$. Most of the lower bounds found turn out to be optimal, as the examples   in Section~\ref{sec: ex} show.

\subsection{The lower bound of $\KK^2(\sC, p_g; 2)$}
\begin{prop}\label{prop: 2}
For any $(X, B)\in \sS(\sC, p_g; 2)$, we have $\vol(K_X+B)\geq \max\{1, p_g-2\}$.
\end{prop}
\begin{proof}
Suppose that $(X, B)\in \sS(\sC, p_g; 2)$ has ample $K_X+B$ and $\vol(K_X+B) = \min\KK^2(\sC, p_g; 2)$. It suffices to show the inequality of the lemma for this log surface.

Note that $(X, B)$ and its minimal resolution $\pi\colon (\tilde X, B_{\tilde X})\rightarrow (X,B)$ are subject to the restrictions given by Proposition~\ref{prop: nec cond}. Since $(X, B)\in\sS(\sC, p_g; 2)$, we have $\kappa(K_{\tilde X}+B_{\tilde X}^\s)=2$, where $B_{\tilde X}^\s$ is the semistable part of $B_{\tilde X}$. It follows from Proposition~\ref{prop: nec cond} (iii) that $B_{\tilde X}^\s = B_{\tilde X}$.  Then $B=\pi_*B_{\tilde X}$ is reduced and $K_X+B$ is Cartier. Thus $\vol(K_X+B)=(K_X+B)^2\geq 1$ holds. 

Next we show that $\vol(K_X+B)\geq p_g -2$. For this purpose, we may assume that $p_g\geq 3$. Let $\phi\colon X\dashrightarrow \PP^n$ with $n=p_g-1$ be the canonical map defined by the linear system $|K_X+B|$. Let $|M|$ be the movable part of $|K_S+B|$, so $M$ is nef and $K_X+B=M+N$, where $N$ is the fixed part of $|K_X+B|$. If $\phi(X)$ is a surface, then we have
\[
\vol(K_X+B)=(K_X+B)^2 \geq M^2 \geq \deg\phi(X) \geq p_g-2.
\] 
where the last inequality is by \cite[Proposition~0]{EisenbudHarris87}.
If $\phi(X)$ is a curve, then we have $M\equiv mF$, where $F$ is a general element of the pencil, and $K_X+B=M+N=mF+N$. Also $m=\deg \phi(X)\geq p_g-1$ by \cite[Proposition~0]{EisenbudHarris87}. Thus we have
\[
\vol(K_X+B) = (K_X+B)^2 = (mF+N)\cdot (mF+N) \geq m(K_X+B)\cdot F \geq p_g -1. 
\]
In conclusion, we have shown that $\vol(K_X+B)\geq \max\{1, p_g-2\}$.  
\end{proof}

\subsection{The lower bound of $\KK^2(\sC, p_g; 1)$}
\begin{prop}\label{prop: 1}
For any $(X, B)\in \sS(\sC, p_g; 1)$, the following holds.
\begin{enumerate}[leftmargin=*]
\item
If $p_g\geq 2$, then
\[
\vol(K_X+B)\geq 
\begin{cases}  
(2c-c^2)(p_g-1)-2c^2 & \text{ if } c < \frac{p_g-1}{p_g+1}\\
 p_g-3 + \frac{4}{p_g+1} & \text{ if }  c\geq  \frac{p_g-1}{p_g+1} 
\end{cases}  
\]
\item If $p_g \leq 1$, then
\[
\vol(K_X+B)\geq
\begin{cases}
2c - 3c^2 & \text{if $c\leq \frac{1}{3}$} \\
\frac{1}{3} & \text{if $c>  \frac{1}{3}$} 
\end{cases}
\]
\end{enumerate}
\end{prop} 
\begin{proof}
{\noindent \bf Step 0.} Suppose that $(X, B)\in \sS(\sC, p_g; 1)$ has ample $K_X+B$ and $\vol(K_X+B) = \min\KK^2(\sC, p_g; 1)$. It suffices to show the inequalities of the proposition for this log surface. Then $(X, B)$ and the minimal resolution $\pi\colon(\tilde X, B_{\tilde X})\rightarrow (X, B)$ are subject to the restrictions of Proposition~\ref{prop: nec cond}. Since  $(X, B)\in \sS(\sC, p_g; 1)$, we have $\kappa(K_{\tilde X}+B_{\tilde X}^\s)=1$. For $l$ sufficiently large and divisible, the linear system $|l(K_{\tilde X}+B_{\tilde X}^\s)|$ defines a fibration $\tilde f\colon \tilde X\rightarrow C$. For a general fiber $\tilde F$ of $\tilde f$, we have $(K_{\tilde X}+B_{\tilde X}^\s)\cdot \tilde F=0$ and $g(\tilde F)\leq 1$. 

Since the divisor $K_{\tilde X}+B_{\tilde X}$ is big, there is a component $\tilde B_0$ of $B_{\tilde X}^\ns$, horizontal with respect to the fibration $f$. Let $b>0$ be the coefficient of $\tilde B_0$ in $B_{\tilde X}^\ns$. Then the log canonical divisor $K_{\tilde X}+B_{\tilde X}^\s+b\tilde B_0$ is big by Lemma~\ref{lem: nef+curve}. Due to the minimality of $\vol(K_X+B)$ we infer from Proposition~\ref{prop: nec cond} that \begin{itemize}[leftmargin=*]
\item $B_{\tilde X}^\ns=b\tilde B_0$ with $b\in (0,c]_{<1}$, and
\item $\tilde B_0\cap\, \supp(B_{\tilde X}^\s)=\emptyset$, thus $K_{\tilde X}\cdot\tilde B_0 = (K_{\tilde X} + B_{\tilde X}^\s) \cdot \tilde B_0>0$.
\end{itemize}

\medskip

{\noindent \bf Step 1.} We pass to the $(K_{\tilde X}+B_{\tilde X}^\s)$-minimal model $\rho\colon (\tilde X, B_{\tilde X}^\s)\rightarrow (Z, B_Z')$, as in Notation~\ref{nota: triangle}. Then $\kappa(K_Z + B_Z') =\kappa(K_{\tilde X}+B_{\tilde X}^\s) =1$, and $|l(K_Z+B_Z')|$ is base point free, defining a fibration $f\colon Z\rightarrow C$ so that $\tilde f=f\circ\rho$. The general fiber $F$ of $f$ is isomorphic to that of $\tilde f$. Note also that $\tilde B_0$ is not contracted by $\rho\colon\tilde X\rightarrow Z$, since $\rho$ only contracts curves vertical with respect to $f$; denote $B_{Z,0} = \rho_* \tilde B_0$.

\medskip

{\noindent \bf Step 2.} In this step, we decompose $\rho\colon \tilde X\rightarrow Z$ into a sequence of simple blow-ups, say
\[
\rho\colon \tilde X=Z_n \xrightarrow{\rho_n} Z_{n-1} \xrightarrow{\rho_{n-1}}\cdots \xrightarrow{\rho_{2}} Z_1 \xrightarrow{\rho_1} Z_0=Z
\]
and compute $\vol(K_X+B)$ in terms of these blow-ups. For each $1\leq i\leq n$, let $p_i\in Z_{i-1}$ be the point blown up by $\rho_i$ and $E_i\subset Z_i$ the $\rho_i$-exceptional $(-1)$-curve over $p_i$. Let $\sE_i\subset \tilde X$ be the total transform of $E_i$. Using the projection formula, we have for $1\leq i, j\leq n$
\[
\sE_i\cdot \sE_j = 
\begin{cases}
-1 &\text{if $i= j$}\\
0 & \text{if $i\neq j$}
\end{cases}
\]
and $\sE_i\cdot \rho^* D = 0$ for any $\RR$-divisor $D$ on $Z$. Let $B_{Z_i}$, $B_{Z_i}'$, $B_{Z_i}'' = bB_{Z_i,0}$ be the pushforward of the $\RR$-divisors $B_{\tilde X}$, $B_{\tilde X}^\s$, $B_{\tilde X}^\ns = b\tilde B_0$ to $Z_i$, respectively, and set $\mu_i:=\mult_{p_i} B_{Z_{i-1},0}$. Then 
\[
K_{\tilde X} = \rho^*K_Z+\sum_{1\leq i\leq n} \sE_i, \hspace{.1cm} B_{\tilde X}^\s = \rho^*B_Z',\hspace{.1cm} B_{\tilde X}^\ns =  \rho_*^{-1}B_Z'' = \rho^*B_Z''-\sum_{1\leq i\leq n} b\mu_i\sE_i,
\]
where the last equality is by Corollary~\ref{cor: pro min Z}. Combining the above equalities, we obtain
\[
K_{\tilde X} +B_{\tilde X}  = \rho^*(K_Z+B_Z) - \sum_{1\leq i\leq n}  (b\mu_i-1) \sE_i.
\]
Setting $d= K_{Z} \cdot B_{Z,0}$, we have
\begin{claim}\label{claim: vol k1}
$\disp\vol(K_X+B) = (2b-b^2)d +b^2\left(2p_a(\tilde B_0) -2 \right) +\sum_{1\leq i\leq n} [(b-b^2)\mu_i + (b\mu_i-1)]$.
\end{claim}
\begin{proof}[Proof of Claim~\ref{claim: vol k1}]
we have
\begin{equation}\label{eq: 1}
\begin{split}
(K_{\tilde X} + B_{\tilde X}^\s)\cdot \tilde B_0 &=K_{\tilde X}\cdot \tilde B_0=\left( \rho^*K_{Z}+ \sum_{1\leq i\leq n} \mathcal E_j\right)\cdot\left(\rho^* B_{Z,0}- \sum_{1\leq i\leq n} \mu_i\mathcal E_j\right)\\
&=K_{Z} \cdot B_{Z,0}+\sum_{1\leq i\leq n} \mu_i = d+\sum_i \mu_i
\end{split}
\end{equation}
where the first equality is because $B_{\tilde X}^\s$ and $\tilde B_0$ do not intersect. By the adjunction formula,
\begin{equation}\label{eq: 2}
\tilde B_0^2 = 2p_a(\tilde B_0) -2 - K_{\tilde X}\cdot \tilde B_0 = 2p_a(\tilde B_0) -2 - \left(d+\sum_i \mu_i\right).
\end{equation}
Note also that $K_{\tilde X} + B_{\tilde X}^\s= \rho^*(K_Z+B_Z') + \sum_{1\leq i\leq n} \mathcal E_j$ and hence
\begin{equation}\label{eq: 3}
(K_{\tilde X} + B_{\tilde X}^\s)^2 =\left (\rho^*(K_Z+B_Z') + \sum_{1\leq i\leq n} \mathcal E_j\right)^2 = (K_Z+B_Z')^2 -n =-n.
\end{equation}
Combining \eqref{eq: 1}, \eqref{eq: 2} and \eqref{eq: 3}, we obtain
\begin{align*}
\vol(K_X+B) &=\left (K_{\tilde X} + B_{\tilde X}\right)^2  =\left (K_{\tilde X} + B_{\tilde X}^\s + b\tilde B_0\right)^2 \\
 & =  \left (K_{\tilde X} + B_{\tilde X}^\s \right)^2 + 2b \left(K_{\tilde X} + B_{\tilde X}^\s\right)\cdot\tilde B_0 + b^2 \tilde B_0^2 \\
 & = -n+2b\left(d+\sum_{1\leq i\leq n} \mu_i\right)+ b^2\left(2p_a(\tilde B_0) -2-d-\sum_{1\leq i\leq n} \mu_i\right) \\
 & = (2b-b^2)d +b^2\left(2p_a(\tilde B_0) -2\right) +\sum_{1\leq i\leq n} \left[(b-b^2)\mu_i + (b\mu_i-1)\right].
\end{align*}    
\end{proof}

{\noindent \bf Step 3.} In this step, we get rid of the $\mu_i$'s and $d$ in Claim~\ref{claim: vol k1} by observing the following lower bounds:
\begin{claim}\label{claim: >1}
For each $1\leq i\leq n$, we have $b\mu_i>1$. 
\end{claim}
\begin{proof}[Proof of Claim~\ref{claim: >1}]
In fact, $b\mu_i-1 = (K_{\tilde X}+B_{\tilde X})\cdot \sE_i = (K_X+B)\cdot \pi_*\sE_i >0$, where the last inequality is because $\sE_i$ is not contracted by $\pi$ and $K_X+B$ is ample.
\end{proof}

\begin{claim}\label{claim: d}
We have $d\geq \max\{1, p_g-1\}$
\end{claim}
\begin{proof}[Proof of Claim~\ref{claim: d}]
It is clear that $d = (K_Z+ B_Z')\cdot B_{Z,0} \geq 1$. In case $p_g\geq 2$, $K_Z+ B_Z' - (p_g-1)F$ is numerically equivalent to some effective vertical curve, and hence $d = (K_Z+ B_Z')\cdot B_{Z,0} \geq (p_g-1)F\cdot B_{Z,0} \geq p_g-1$.
\end{proof}

In view of Claim~\ref{claim: vol k1}, the following Claim~\ref{claim: vol k1 ineq} follows from Claims~\ref{claim: >1} and \ref{claim: d}, together with the fact that $b -b^2> 0$. 
\begin{claim}\label{claim: vol k1 ineq}
We have $\vol(K_{\tilde X}+B_{\tilde X}) \geq  (2b-b^2)\max\{1, p_g-1\} + b^2(2p_a(\tilde B_0)-2)$ with equality holds if and only if $d= \max\{1, p_g-1\}$, and $n=0$, that is, $\rho\colon \tilde X\rightarrow Z$ is an isomorphism.
\end{claim}

If $p_a(\tilde B_0)> 0$ then $b=c<1$, and hence $\vol(K_X+B)\geq  (2c-c^2)\max\{1, p_g-1\}$ by Claim~\ref{claim: vol k1 ineq}. In particular, in this case $\vol(K_X+B)$ is strictly larger than the lower bounds given in Proposition~\ref{prop: 1}.

\medskip

{\noindent \bf Step 4.} From now on, we can assume that $p_a(\tilde B)=0$. Under this assumption, we have
\begin{equation}\label{eq: b0}
b = \min\left\{c, \frac{\tilde B_0^2+2}{\tilde B_0^2}\right\} = \min\left\{c, \frac{m+\sum_i \mu_i}{2+m+\sum_i \mu_i}\right\}
\end{equation}
and $b=\frac{d+\sum_i \mu_i}{2+d+\sum_i \mu_i}$ if and only if $\tilde B_0$ is contracted by $\pi\colon\tilde X\rightarrow X$. 
By Claim~\ref{claim: d}, we have $d\geq \max\{1, p_g-1\}$ and hence
\begin{equation}\label{eq: b0'}
 \frac{d+\sum_i \mu_i}{2+d+\sum_i \mu_i} \geq  \frac{d}{2+d}  \geq \frac{\max\{1, p_g-1\}}{\max\{1, p_g-1\}+2}
\end{equation}
with equality if and only if $n=0$ and $d=\max\{1, p_g-1\}$.

\medskip

(i) Suppose that $p_g\geq 2$. In this case, we have $\max\{1, p_g-1\} = p_g-1$. We divide the discussion according to the value of $c$.

(i.1) If $c< \frac{p_g-1}{p_g+1}$ then by \eqref{eq: b0'}
\[
 \frac{d+\sum_i \mu_i}{2+d+\sum_i \mu_i}\geq  \frac{p_g-1}{p_g+1} \geq c.
\] 
It follows from \eqref{eq: b0} that $b=c$, and by Claim~\ref{claim: vol k1 ineq} we have
\begin{equation}
\vol(K_X+B)\geq (2c-c^2)d -2c^2 \geq  (2c-c^2)\max\{1, p_g-1\} - 2c^2
\end{equation}
with equalities if and only if $n=0$ and $d=\max\{1, p_g-1\}$.

(i.2) If $c\geq \frac{p_g-1}{p_g+1}$ then by \eqref{eq: b0}  and \eqref{eq: b0'} we have
\[
b = \min\left\{c, \frac{d+\sum_i \mu_i}{2+d+\sum_i \mu_i}\right\} \geq \min\left\{\frac{p_g-1}{p_g+1}, \frac{d+\sum_i \mu_i}{2+d+\sum_i \mu_i}\right\} = \frac{p_g-1}{p_g+1}
\]
with equalities if and only if $n=0$ and $d=p_g-1$. By Claim~\ref{claim: vol k1 ineq} we have
\[
\vol(K_X+B)\geq (2b-b^2)m -2b^2 \geq  (2b-b^2)(p_g-1) -2b^2 \geq p_g-3 +\frac{4}{p_g+1}
\]
with equalities if and only if $n=0$, $b =  \frac{p_g-1}{p_g+1}$, and $d=p_g-1$.

\medskip

(ii) Suppose that $p_g\leq 1$. Again, we divide the discussion according to the value of $c$.

(ii.1) If $c< \frac{1}{3}$, then 
\[
 \frac{d+\sum_i \mu_i}{2+d+\sum_i \mu_i} \geq  \frac{d}{2+d} \geq  \frac{1}{3} > c.
\]
It follows from \eqref{eq: b0} that $b=c$, and by Claim~\ref{claim: vol k1 ineq} we have
\begin{equation}
\vol(K_X+B)\geq (2c-c^2)d -2c^2 \geq  (2c-c^2) - 2c^2 = 2c-3c^2
\end{equation}
where equalities hold if and only if $n=0$ and $d=1$.

(ii.2) If $c\geq \frac{1}{3}$, then by \eqref{eq: b0}  and \eqref{eq: b0'} we have
\[
b \geq \min\left\{c, \frac{d+\sum_i \mu_i}{2+d+\sum_i \mu_i}\right\} \geq \min\left\{\frac{1}{3}, \frac{d+\sum_i \mu_i}{2+d+\sum_i \mu_i}\right\} = \frac{1}{3}
\]
with equalities if and only if $n=0$ and $d=1$. By Claim~\ref{claim: vol k1 ineq} we have
\[
\vol(K_X+B)\geq (2b-b^2)d -2b^2 \geq  (2b-b^2) -2b^2 \geq \frac{1}{3}
\]
with equalities if and only if $n=0$, $d=1$, and $b = \frac{1}{3}$.

Obviously, the lower bounds obtained in Step 4 (when $p_a(\tilde B_0)=0$) is smaller than the one obtained in Step 3  (when $p_a(\tilde B_0)>0$). The proof of Proposition~\ref{prop: 1} is now complete. \end{proof}

\begin{cor}\label{cor: linear lower bound}
For any projective log canonical surface of general type $(X, B)$, we have
\begin{equation}\label{eq: d=2 lc N}
\vol(K_X+B)\geq (2c-c^2)p_g(X, B)-(2c+c^2) 
\end{equation}
where $c:=\min(\sC_B\cup\{1\})$. 
\end{cor}
\begin{proof}
If $p_g:=p_g(X, B)\geq 2$ then $(X, B)\in \sS(\sC_B, p_g; \kappa)$ with $\kappa\in\{1,2\}$. One checks directly that the following strict inequality holds
\begin{equation}\label{eq: strict ineq}
(2c-c^2)(p_g-1)-2c^2 <  p_g-3 + \frac{4}{p_g+1} 
\end{equation}
Thus the lemma holds in this case by Propositions~\ref{prop: 2} and \ref{prop: 1}. If $p_g\leq 1$, then the right hand side of \eqref{eq: d=2 lc N} is negative, so the inequality also holds.
\end{proof}

\subsection{The lower bound of $\KK^2(\sC, p_g; 0)$}
\begin{prop}\label{prop: 0}
For any $(X, B)\in \sS(\sC, p_g; 0)$, we have
\[
\vol(K_X+B)\geq  
\begin{cases}
\frac{1}{42}c^2 & \text{ if } c\leq \frac{7}{13} \\
-\frac{11}{6} c^2 +2c -\frac{7}{13} & \text{ if }  \frac{7}{13}<c\leq\frac{6}{11} \\
\frac{1}{143} & \text{ if }  c>\frac{6}{11}
\end{cases}
\] 
\end{prop}

\begin{proof}
\noindent{\bf Step 0.} Suppose that $(X, B)\in \sS(\sC, p_g; 0)$ has ample $K_X+B$ and $\vol(K_X+B) = \min\KK^2(\sC, p_g; 0)$. It suffices to show the inequalities of the proposition for this log surface. Then $(X, B)$ and the minimal resolution $\pi\colon(\tilde X, B_{\tilde X})\rightarrow (X, B)$ are subject to the restrictions of Proposition~\ref{prop: nec cond}.  In particular, we have $B_{\tilde X}^{\ns} = (B_{\tilde X}^{\ns})_{\leq c}$. Also, observe that $(B_{\tilde X}^{\ns})_{<c}$ is necessarily contracted by $\pi\colon \tilde X\rightarrow X$. Therefore,
\begin{equation}\label{eq: raise to c}
\vol(K_X+B)= \vol(K_{\tilde X} + B_{\tilde X}) = \vol(K_{\tilde X} + B_{\tilde X}^\s +  B_{\tilde X}^\ns )  = \vol(K_{\tilde X} + B_{\tilde X}^\s + c\lceil B_{\tilde X}^{\ns} \rceil).
\end{equation}

\medskip

\noindent{\bf Step 1.} In this step, we pass to the $(K_{\tilde X}+B_{\tilde X}^\s)$-minimal model $\rho\colon (\tilde X, B_{\tilde X}^\s)\rightarrow (Z, B_Z')$ as in Notation \ref{nota: triangle}. Then $Z, B_Z':=\rho_*B_{\tilde X}^\s$ and $B_Z'':=\rho_*B_{\tilde X}^\ns$ enjoy the properties of Corollary~\ref{cor: pro min Z}. Since $\kappa(K_{\tilde X}+ B_{\tilde X}^\s)=0$ by assumption, we have $K_Z+B_Z'\sim_{\QQ} 0$ and hence $B_Z''\sim_{\RR} K_Z+B_Z'+B_Z'' = \rho_*(K_{\tilde X}+B_{\tilde X})$ is big. Using Notation~\ref{nota: SZBZ}, we have $(X, B_{\tilde X}^\s + c\lceil B_{\tilde X}^{\ns}\rceil) \in \sS(Z, B_Z^{(c)}; B_Z')$ and $\vol(K_X+B) \in \KK^2(Z, B_Z^{(c)}; B_Z')$, where $B_Z^{(c)} = B_Z' + c\lceil B_Z'' \rceil$. Hence it suffices to bound the set $\KK^2(Z, B_Z^{(c)}; B_Z')$ from below. 

\medskip

\noindent{\bf Step 2.}  In this step we prove that $\supp(B_Z'')$ is connected. Otherwise, we can write $B_Z''=D_1+D_2$ with $D_1$ and $D_2$ two effective $\RR$-divisors with disjoint supports.  Let $P_1\leq D_1$ and $P_2\leq D_2$ be the positive parts. Then $P=P_1+P_2$ is the positive part of $B_Z''$.  Since $P_1P_2=0$ and $B_Z''$ is big, we have $0<P^2=P_1^2+P_2^2$. So either $P_1^2>0$ or $P_2^2>0$. It follows that either $D_1$ or $D_2$ is big, contradicting Proposition~\ref{prop: nec cond} (iii).

\medskip

\noindent{\bf Step 2.} In this step, we assume that $p_a(\lceil B_Z''\rceil)\geq 2$. By the adjunction formula $\lceil B_Z''\rceil^2\geq 2$. 

\begin{claim}\label{claim: no -2 tail}
There is no smooth rational component $D$ of $\lceil B_Z''\rceil$ such that $D\cdot (\lceil B_Z''\rceil-D)=1$.
\end{claim}
\begin{proof}[Proof of Claim~\ref{claim: no -2 tail}]
Otherwise $D^2=-2$ and $(\lceil B_Z''\rceil-D)^2=\lceil B_Z''\rceil^2>0$, so $\lceil B_Z''\rceil-D$ is big, contradicting  Proposition~\ref{prop: nec cond} (iii).    
\end{proof}                                                                            


%

Let $\mu$ be the maximal multiplicity of $\lceil B_Z''\rceil$ at a point. By Lemma~\ref{lem: bdd below},  we have
\begin{equation}
K_{\tilde X}+B_{\tilde X}^\s+c\lceil B_{\tilde X}^\ns\rceil \geq \rho^*(K_{Z}+B_Z') +\frac{1}{c\mu}\rho^*(c\lceil B_Z''\rceil)\equiv\frac{1}{\mu}\rho^*\lceil B_Z''\rceil,
\end{equation}
 and hence by \eqref{eq: raise to c}
\begin{equation}\label{eq: pa2}
\vol(K_X+B)  =\vol( K_{\tilde X}+B_{\tilde X}^\s+c\lceil B_{\tilde X}^\ns\rceil) \geq \frac{1}{\mu^2}\vol(\lceil B_Z''\rceil).
\end{equation}

If $\mu\leq 3$, then by \eqref{eq: pa2} we have
  \[
\vol(K_X+B)\geq \frac{1}{9}\vol(\lceil B_Z''\rceil)\geq \frac{1}{9} \lceil B_Z''\rceil^2\geq  \frac{2}{9},
\]
where the second inequality is because of Lemma~\ref{lem: vol ZD} (i). 

Now suppose that $\mu\geq 4$. Let $p\in \lceil B_Z''\rceil$ be a point of multiplicity $\mu$ and $\tilde Z \rightarrow Z$ the blow-up of $p$. Let $\tilde B_Z''$ be the strict transform of $B_Z''$. Then $\lceil\tilde B_Z''\rceil$ has at most $\mu$ connected components, and we have
\[
p_a(\lceil B_Z''\rceil) = p_a(\lceil\tilde B_Z''\rceil) +\frac{\mu(\mu-1)}{2}\geq 1-\mu+\frac{\mu(\mu-1)}{2}=\frac{(\mu-1)(\mu-2)}{2}.
\]
It follows that 
\[
\vol(\lceil B_Z''\rceil)\geq \lceil B_Z''\rceil^2= K_Z. \lceil B_Z''\rceil+ \lceil B_Z''\rceil^2= 2p_a(\lceil B_Z''\rceil) -2\geq \mu(\mu-3),
\]
where the first inequality is by Lemma~\ref{lem: vol ZD} (i), so by \eqref{eq: pa2} we have
\[
\vol(K_X+B)\geq \frac{1}{\mu^2}\vol(\lceil B_Z''\rceil) \geq 1-\frac{3}{\mu}\geq \frac{1}{4}.
\]

\medskip

\noindent{\bf Step 3.} In this step we assume that $p_a(\lceil B_Z''\rceil)\leq 1$. Then every connected subcurve of $\lceil B_Z''\rceil$ has arithmetic genus at most $1$; each smooth rational component of $\lceil B_Z'' \rceil$, if present, has zero intersection number of $K_Z$ and hence should be a $(-2)$-curve. It is then not hard to see that, the assumption $p_a(\lceil B_Z''\rceil)\leq 1$ together with Proposition~\ref{prop: nec cond} implies that $\lceil B_Z''\rceil=C+D$ where $C$ supports a curve of canonical type and $D$ is a $(-2)$-curve such that $C\cdot D=1$. By the explicit computation done in Section~\ref{sec: can curve}, the minimal possible volume occurs when $C$ supports a curve of type $\II^*$, and it serves as the uniform lower bound of Proposition~\ref{prop: 0}.
\end{proof}

Combining Propositions~\ref{prop: 2}, \ref{prop: 1}, and \ref{prop: 0}, we obtain
\begin{thm}\label{thm: lower bound}
Let $\sC\subset(0,1]$ be a subset such that $\sC\cup\{1\}$ attains the minimum, say $c$, and let $p_g$ be a positive integer. Then the following holds for any $(X, B)\in \sS(\sC, p_g)$.
\begin{enumerate}
\item If $p_g\geq 2$, then
\[
\vol(K_X+B)\geq
\begin{cases}  
(2c-c^2)(p_g-1)-2c^2 & \text{ if } c < \frac{p_g-1}{p_g+1}\\
 p_g-3 + \frac{4}{p_g+1} & \text{ if }  c\geq \frac{p_g-1}{p_g+1} 
\end{cases}  
\]
\item If $p_g = 1$, then
\[
\vol(K_X+B)\geq
\begin{cases}
\frac{1}{42}c^2, & \text{ if } c\leq \frac{7}{13} \\
-\frac{11}{6} c^2 +2c -\frac{7}{13}, & \text{ if }  \frac{7}{13}<c\leq\frac{6}{11} \\
\frac{1}{143}, & \text{ if }  c>\frac{6}{11}.
\end{cases}
\] 
\end{enumerate}
\end{thm}
In Section~\ref{sec: ex} we will show that Theorem~\ref{thm: lower bound} is sharp in that the equalities can be achieved for any given $p_g>0$.

The next proposition gives practical criteria for a projective log canonical surface $(X, B)$ to lie in $\sS(\sC, p_g; \kappa)$ with $\kappa\geq 0$.
\begin{prop}\label{prop: irr}
Let $(X, B)$ be a smooth projective log canonical surface of general type, and let $B = B^{\s} + B^\ns$ be the semistable decomposition. Suppose that one of the following holds:
\begin{enumerate}
\item $q(X)>0$ and either $\supp(B^\ns)$ is empty or it consists of rational curves.
\item $q(X)=0$ and $B^{\s}\neq 0$. 
\end{enumerate}
Then $\kappa(K_X+B^{\s})\geq 0$. Moreover, in case (ii), we have $p_g(X, B)>0$.
\end{prop}
\begin{proof}
(i) Suppose that $q(X)>0$ and $\supp(B^\ns)$ consists of rational curves. Suppose on the contrary that $\kappa(K_X+B^{\s})=-\infty$. Then $X$ is birational to a ruled surface and the Albanese map  $\alpha\colon X\rightarrow A$ is a $\PP^1$-fibration.  Any minimal model program on $X$ contracts only rational curves, which are necessarily contained in fibers of $\alpha$; in other words, it is over $A$. Since $\kappa(K_X+B^{\s})=-\infty$, we have $(K_X+B^{\s})\cdot F <0$ for a general fiber $F$ of $\alpha$. On the other hand, since $B^{\ns}$ consists of rational curves, it is contracted by $\alpha$ and thus $B^{\ns}\cdot F=0$.  Due to the bigness of $K_X+B^{\s}+B^{\ns}$, we have $(K_X+B^{\s})\cdot F=(K_X+B^{\s}+B^{\ns})\cdot F> 0,$ which is a contradiction. 

(ii) Now suppose that $q(X)=0$ and $B^{\s}\neq 0$. Then $h^0(B^{\s}, K_{B^{\s}})>0$ because every connected component of $B^{\s}$ has positive arithmetic genus. A portion of the long exact sequence associated to the short exact sequence $0\rightarrow \sO_X(K_X)\rightarrow \sO_X(K_X+B^{\s})\rightarrow \sO_{B^{\s}}(K_{B^{\s}})\rightarrow 0$ reads
\begin{equation}\label{eq: surj}
H^0(X, K_X+B^{\s})\rightarrow H^0(B^{\s}, K_{B^{\s}}) \rightarrow H^1(X, K_X)
\end{equation}
Since $q(X)=0$, we have $H^1(X, K_X)=0$ and thus the first map of \eqref{eq: surj} is surjective. It follows that $H^0(X, K_X+B^{\s}) \neq 0$ and hence $\kappa(K_X+B^{\s})\geq 0$.
\end{proof}

\begin{cor}\label{cor: <1/143}
Suppose that $(X, B)\in \sS(\{1\}, p_g)$ has $\vol(K_X+B)<\frac{1}{143}$. Then $\tilde X$ is a rational surface and $B_{\tilde X}^\s =0$, where $\pi\colon (\tilde X, B_{\tilde X})\rightarrow (X, B)$ is the minimal resolution.
\end{cor}
\begin{proof}
Since $\vol(K_X+B)<\frac{1}{143}$, we have $\kappa(K_{\tilde X} + B_{\tilde X}^\s) = -\infty$ by Theorem~\ref{thm: main} applied to the case $\sC=\{1\}$. Note that the non-rational components of $B_{\tilde X}$ are necessarily contained in the semistable part $B_{\tilde X}^\s$, so $B_{\tilde X}^\ns$ consists of rational curves. By Proposition~\ref{prop: irr} (i), applied to $(\tilde X, B_{\tilde X})$, we have $q(\tilde X)=0$ and $B_{\tilde X}^\s =0$. It follows that $\tilde X$ is a rational surface. 
\end{proof}

\section{Finding the minimal volumes}\label{sec: ex}
Given a subset $\sC\subset (0,1]$ such that $\min (\sC\cup \{1\}) = c$, a positive integer $p_g$, and $\kappa\in\{0,1,2\}$, we have obtained low bounds of $\KK^2(\sC, p_g; \kappa)$ in Section~\ref{sec: lower bound}. We construct in this section projective log surfaces $(X, B)\in \sS(\sC, p_g; \kappa)$ realizing the low bounds in most cases. In particular, those with $\vol(K_X+B) = \min \KK^2(\sC, p_g; \kappa)$ are presented when $p_g>0$. 

\subsection{Log surfaces in $\sS(\sC, p_g; 2)$ achieving the minimal volume}
The following theorem of Sakai gives a characterization of the equality case of Proposition~\ref{prop: 2}. Note that the proof of \cite[Theorem~6.7]{Sak80} works in all characteristic.
\begin{thm}[\cite{Sak80}, Theorem~6.7]\label{thm: Sak80}
If $(X, B)\in \sS(\sC, p_g; 2)$ has ample $K_X+B$ and $\vol(K_X+B) =  p_g-2$, then the minimal resolution $(\tilde X, B_{\tilde X})$ of $(X, B)$ is one of the following:
\begin{enumerate}
\item ($\PP^2$, nodal quartic curve),
\item ($\PP^2$,  nodal quintic curve),
\item $\tilde X =\FF_e$, $B_{\tilde X}$ a nodal curve in $|3\Gamma_0 + (2e+k+2)F|$ with $k\geq 1, e\geq 0$,
\item $\tilde X =\FF_e$, $B_{\tilde X}$ a nodal curve in $|3\Gamma_0 + (2e+2)F|$ with $e>0$,
\end{enumerate}
where $\FF_e=\PP(\sO_{\PP^1}\oplus \sO_{\PP^1}(-e))\rightarrow \PP^1$ denotes a Hirzebruch surface with  fiber $F$ and a section $\Gamma_0$ such that $\Gamma_0^2=-e$.
\end{thm}

\begin{cor}
 $\min\KK^2(\sC, p_g; 2)= \max\{1, p_g-2\}$ if and only if either $0\leq p_g\leq 2$ or $1\in \sC$.
\end{cor}
\begin{proof}
One can find smooth projective surfaces of general type with $0\leq p_g\leq 2$ and $K_S^2=1$ (see \cite{Cat79, Cat87}). If $p_g\geq 3$, Theorem~\ref{thm: Sak80} provides examples $(X, B) \in \sS(\{1\}, p_g; 2)$ attaining the lower bound of Proposition~\ref{prop: 2}, that is, $\vol(K_X+B)=  p_g-2$; the same theorem shows that the inequality of Proposition~\ref{prop: 2} is strict if $1\notin \sC$.
\end{proof}
\subsection{Log surfaces in $\sS(\sC, p_g; 1)$ achieving the minimal volume}
In this subsection, we construct for each $p_g>0$ a log surface $(X, B)\in \sS(\sC, p_g; 1)$ realizing the equality of Proposition~\ref{prop: 1}. By the proof of Proposition~\ref{prop: 1}, we need to construct a smooth projective log canonical surface $(Z, B_Z'+D)$ such that
\begin{enumerate}
\item $B_Z'$ is a (reduced) semistable curve such that $\kappa(K_{Z}+B_Z')=1$ and $K_{Z}+B_Z'$ is nef,
\item $D$ is a smooth rational curve with $K_Z\cdot D =\max\{p_g-1, 1\}$, and
\item $\supp(B_Z')\cap\, \supp(D)=\emptyset$.
\end{enumerate}
Then, setting $B_Z^{(c)} = B_Z'+c D$, $(X, B)$ can be taken as the ample model of $(Z, B_Z^{(c)})$.

\begin{ex}\label{ex: pg>1}
Let $p_g\geq 2$ be an integer. Let $h\colon S\rightarrow \PP^1$ be a relatively minimal rational elliptic surface with a section $\Gamma$. Then by the canonical bundle formula, we can take $K_S = -F$, where $F$ is a fiber of $f$. Set $B_S'= \sum_{1\leq i \leq p_g}{F_i}$, where $F_1, \dots, F_{p_g}$ are $p_g$ general fibers of  $h$. Then $K_S+B_S' \sim (p_g-1)F$.

Let $p_i$ be the intersection point of $\Gamma$ and $F_i$ for $1\leq i\leq p_g$, and let $\rho\colon Z\rightarrow S$ be the blow-up of $\{p_i\}_{1\leq i\leq p_g}$. Let $B_Z'$ and $D$ be the strict transforms of $B_S'$ and $\Gamma$ on $Z$ respectively, and set $B_Z^{(c)}= B_Z' + c D$. Then $B_Z'$ is the semistable part of $B_Z^{(c)}$,  $K_Z+B_Z' \sim (p_g-1) f^*t$ for $t\in \PP^1$, and $K_Z+B_Z^{(c)}$ is big for $c=\min (\sC\cup\{1\})$. Thus $p_g(Z, B_Z') = p_g$.

Let $\pi\colon (Z, B_Z^{(c)}) \rightarrow (X^{(c)}, B^{(c)})$ be the contraction to the ample model. Then $B_Z'$ is contracted by $\pi$, and $D$ is contracted by $\pi$  if and only if $c\geq \frac{p_g-1}{p_g+1}$. It follows that $B^{(c)} = c\pi_*D$ and hence $(X^{(c)}, B^{(c)})\in \sS(\{c\}_{<1}, p_g; 1)$. The volume is exactly the lower bound of Proposition~\ref{prop: 1}:
\[
\vol\left(K_{X^{(c)}}+B^{(c)}\right) = \vol\left(K_Z+B_Z^{(c)}\right)=
\begin{cases}  
(2c-c^2)(p_g-1)-2c^2 & \text{ if } c < \frac{p_g-1}{p_g+1}\\
 p_g-3 + \frac{4}{p_g+1} & \text{ if }  c\geq \frac{p_g-1}{p_g+1} 
\end{cases}  
\]
\end{ex}

\begin{ex}
Let $p_g\geq 2$ be an integer. Take $(Z, B_Z^{(c)}) = (\FF_e, \Gamma+c\Gamma_0) $, where $e=p_g-1$, $\FF_e$ is the Hirzebruch surface with a section $\Gamma_0$ such that $\Gamma_0^2=-e$, and $\Gamma\in |2(\Gamma_0+eF)|$ is a general element. Then the ample model $(X^{(c)}, B^{(c)})$ lies in $\sS(\{c,1\}, p_g; 1)$ and $\vol(K_{X^{(c)}}+B^{(c)})$ achieves the lower bound of Proposition~\ref{prop: 1}. In this example, we have $(B^{(c)})^\s\neq 0$.
\end{ex}

\begin{ex}\label{ex: pg=1}
Let $S$ be a (classical) Enriques surface and $h\colon S\rightarrow \PP^1$ an elliptic fibration. Then $h$ has two double fibers, say $2F_1$ and $2F_2$, and a double section $\Gamma$; see \cite[Theorem~5.7.2]{CossecDogachev89}. we have $K_S\sim F_2-F_1$. Note that the double fibers are necessarily of type $2I_{b_i}$, $i\in\{1,2\}$, and hence the reduced parts $F_i$ have at most nodes as singularities, and the curves $\Gamma$ and $F_i$ intersect transversally at one point, say $p_i$. Let $\rho\colon Z\rightarrow S$ be the blow-up of $p_1$. Let $B_Z'$ and $D$ be the strict transforms of $F_1$ and $\Gamma$ respectively. Then $K_Z+B_Z'=\rho^*(K_S+F_1)\sim \rho^* F_2$ has Iitaka--Kodaira dimension $1$. Moreover, we have
\[
B_Z'\cap\, D=\emptyset,\quad  K_Z\cdot D = 1, \quad p_g(Z,  B_Z') = 1.
\]
Let $B_Z^{(c)}= B_Z' + c D$ and $\pi\colon (Z, B_Z^{(c)}) \rightarrow (X^{(c)}, B^{(c)})$ the contraction to the ample model. Then, as in Example~\ref{ex: k3}, we have $(X^{(c)}, B^{(c)})\in \sS(\{c\}_{<1}, 1; 1)$. One computes easily that
\[
\vol\left(K_{X^{(c)}}+ B^{(c)}\right)=\vol\left(K_Z+B_Z^{(c)}\right)=
\begin{cases}  
2c-3c^2 & \text{ if } c \leq \frac{1}{3} \\
 \frac{1}{3} & \text{ if }  c> \frac{1}{3} 
\end{cases}  
\]
\end{ex}

As a consequence of Proposition~\ref{prop: 1} and the above examples, we obtain
\begin{thm}\label{thm: 1}
Let $\sC\subset(0,1]$ be a subset such that $\sC\subset (0,1]$ attains the minimum, say $c$. Then the following holds.
\begin{enumerate}[leftmargin=*]
\item
For $p_g\geq 2$, we have
\[
\min \KK^2(\sC, p_g; 1) = \min \KK^2(\{c\}_{<1}, p_g; 1)=
\begin{cases}  
(2c-c^2)(p_g-1)-2c^2 & \text{ if } c < \frac{p_g-1}{p_g+1}\\
 p_g-3 + \frac{4}{p_g+1} & \text{ if }  c\geq  \frac{p_g-1}{p_g+1} 
\end{cases}  
\]
\item For $p_g = 1$, we have
\[
\min \KK^2(\sC, 1; 1) = \min \KK^2(\{c\}_{<1}, p_g; 1)=
\begin{cases}
2c - 3c^2 & \text{if $c< \frac{1}{3}$} \\
\frac{1}{3} & \text{if $c\geq  \frac{1}{3}$} 
\end{cases}
\]
\end{enumerate}
\end{thm} 
 \begin{cor}\label{cor: linear lower bound 2}
 The inequality of Corollary~\ref{eq: d=2 lc N} is optimal in the following sense:
\begin{enumerate}[leftmargin=*]
\item  If  $c<1$, then the equality can be attained for  $(X, B)$ with  $p_g(X, B)\geq \frac{1+c}{1-c}$. 
\item  if $c=1$ then the inequality is strict, but there is a sequence of projective log canonical surfaces  $X_i$ of general type such that
\[
\lim_{i\to \infty}\vol(K_{X_i}) - p_g(X_i) + 3 =0
\]
\end{enumerate}
 \end{cor}

\subsection{Log surfaces in $\sS(\sC, p_g; 0)$ achieving the minimal volume}
In this subsection, we construct for a log surface $(X, B)\in \sS(\sC, 1; 0)$ realizing the equality of Proposition~\ref{prop: 0}. If the characteristic of the base field $\bbk$ is $2$,  the same can be done when $p_g=0$.

The point is to exhibit a smooth projective log canonical surface $(Z, B_Z')$ together with a simple normal crossing curve $C+D$ such that 
\begin{enumerate}[leftmargin=*]
\item $B_Z'$ is a (reduced) semistable curve such that $K_Z+ B_Z' \sim_\QQ 0$,
\item $\disp C=\sum_{1\leq i\leq 9} C_i$ supports a curve of canonical type $\II^*$ and $D$ is a $(-2)$-curve intersecting $C$ transversally at exactly one point, so that the dual graphs is
\begin{center}
  \begin{tikzpicture}
\node[wbullet, label = below:$D$]  (D) at (0,0) {};
\foreach \x in {1,2,3,4,5,6,7,8}
\node[wbullet, label = below:$C_\x$] (C\x) at (\x,0) {};
\node[wbullet, label = right:$C_9$] at (6, 1) (C9){};
\foreach \x/\y in {1/2,2/3,3/4,4/5,5/6,6/7,7/8, 6/9}
\draw (C\x)--(C\y);
\draw (D) -- (C1);
\end{tikzpicture}
  \end{center}
  \item $\supp(B_Z')\cap \supp(C+D)=\emptyset$
\end{enumerate}
Let $\rho\colon W\rightarrow Z$ be the blow up of all nodes of $C+D$ and $B_W^{(c)} = \rho_*^{-1}B_Z' + c\rho_*^{-1}(C+D)$. Then the ample model $(X^{(c)}, B^{(c)})$ of $(W, B_W^{(c)})$ lies in $\sS(\{c,1\}, p_g; 0)$ and $\vol(K_X+B) = \min \KK^2(\{c,1\}, p_g; 0)$, where $p_g=p_g(Z, B_Z')\in \{0,1\}$; see Section~\ref{sec: can curve}. In Examples~\ref{ex: k3}, \ref{ex: rational} and \ref{ex: enriques}, the coefficient set of $B^{(c)}$ is as follows, depending on the value of $c$:
\[
\sC_{B^{(c)}} = 
\begin{cases}
\{c\} & \text{if $c< \frac{7}{13}$}\\
\emptyset & \text{if $c\geq \frac{7}{13}$} 
\end{cases}
\] 

\begin{ex}\label{ex: k3}
Let $Z$ be a K3 surface, $C+D$ the reduced simple normal crossing divisor, consisting of $(-2)$-curves with dual graph as above (see \cite{Shi06} for the existence of such a K3 surface, at least in characteristic zero). It suffices to take $B_Z'=0$, and we have $p_g(Z, B_Z')=p_g(Z)= 1$ in this case. 
\end{ex}
\begin{ex}\label{ex: rational}
Let $A$ be a nodal cubic curve on $\PP^2$ with at most nodes as singularities and a line  $L$ intersecting $A$ transversally at 3 points. We blow up the intersection points $A\cap\, L$ and their infinitely near points on the strict transforms of $A$ successively to arrive at the following configuration of curves
\begin{center}
  \begin{tikzpicture}[font=\tiny]
    \begin{scope}
    \foreach \x in {1,...,7}
    \node[wbullet] at (\x+1, 0) (D\x){};
    \node[right] at (D7.east) {$L_Z$};
    \foreach \x in {8,9}
    \node[wbullet] at (\x-6,-1) (D\x){};
    \node[wbullet] at (2,1) (D10){};
       \foreach \y in {1,2,3}
\node [bbullet] at (1,\y-2) (E\y) {};  
\node[wbullet,label=below: $A_Z$] at (0,0) (A) {};
    \draw (A)--(E1)--(D8) -- (D9) ..controls (8,-1).. (D7);
    \draw (A)--(E2)--(D1)--(D2)--(D3)--(D4)--(D5)--(D6)--(D7);
    \draw (A)--(E3)--(D10)..controls (8,1)..(D7);
\end{scope} 
\end{tikzpicture}
  \end{center}
 where $A_Z$ and $L_Z$ are the strict transforms of $A$ and $L$ respectively, the white bullets denote $(-2)$-curves, and the black bullets denote $(-1)$-curves. We take $B_Z'=A_Z$, $C+D$ the sum of all the $(-2)$-curves visible in the dual graph, and we have $p_g(Z, B_Z')=1$ in this case. 
\end{ex}

\begin{ex}\label{ex: enriques}
Suppose that $\cha\bbk =2$. Let $Z$ be a classical/supersingular Enriques surface of type $\tilde E_8$ (\cite{Salomonsson03}), so there is a configuration $C+D$ of $(-2)$-curves as above. We take $B_Z'=0$, and we have 
\[
p_g(Z, B_Z')=p_g(Z)=
\begin{cases}
0 & \text{ if $Z$ is classical}\\
1 & \text{ if $Z$ is supersingular}
\end{cases}
\]
\end{ex}
By Proposition~\ref{prop: 0} and the above examples, we obtain
\begin{thm}\label{thm: 0}
Let $\sC\subset(0,1]$ be a subset such that $\sC\subset (0,1]$ attains the minimum, say $c$. Then
\[
\min \KK^2(\sC, 1; 0) = \min \KK^2(\{c\}_{<1}, 1; 0)=
\begin{cases}
\frac{1}{42}c^2 & \text{ if } c\leq \frac{7}{13} \\
-\frac{11}{6} c^2 +2c -\frac{7}{13} & \text{ if }  \frac{7}{13}<c\leq\frac{6}{11} \\
\frac{1}{143} & \text{ if }  c>\frac{6}{11}
\end{cases}
\]
\end{thm}
We are going to address the sharpness of the inequality in Proposition~\ref{prop: 0} in the case $p_g=0$. First, a simple lemma on the algebraic fundamental group.
\begin{lem}\label{lem: pi1}
Let $Z$ be a smooth projective surface and $C$ a big connected curve on $Z$. If the algebraic fundamental group $\pi_1^{\alg}(C)$ is trivial, then so is $\pi_1^{\alg}(S)$. 
\end{lem}
\begin{proof}
Suppose on the contrary that there is an \'etale cover $f\colon \tilde Z\rightarrow Z$ of degree $d>1$. Then $f^*C = \cup_{1\leq i\leq d}\tilde C_i$, where the $\tilde C_i$'s are pairwise disjoint and $\tilde C_i\cong C$. Since each $C_i$ is big, this contradicts the Hodge index theorem. \end{proof}

\begin{prop}\label{prop: 0=}
Let $\sC\subset(0,1]$ be a subset such that $\sC\cup\{1\}$ attains the minimum, say $c$. Let $(X, B)\in\sS(\sC, p_g; 0)$ be such that $K_X+B$ is ample and $\vol(K_X+B) =\min\KK^2(\sC, 1; 0)$. Then the minimal resolution $\pi\colon (\tilde X, B_{\tilde X}) \rightarrow (X, B)$ has the following characterization:
\begin{enumerate}[leftmargin=*]
\item  $\tilde X$ is birationally a K3 surface,  or a rational surface, or in case $\cha \bbk =2$, $\tilde X$ is a classical/supersingular Enriques surface. In particular, the algebraic fundamental group $\pi_1^\alg(\tilde X)$ is trivial and if $\bbk\cong \CC$ then the topological fundamental group $\pi_1(\tilde X)$ is trivial.
\item  The semistable part $B_{\tilde X}^\s$ and the non-semistable part $B_{\tilde X}^\ns$ of $B_{\tilde X}$ are disjoint. 
\item if  $c\leq \frac{7}{13}$ then $B_{\tilde X}^\ns$ a simple normal crossing divisor consisting of $(-2)$-curves, and its dual graph is
\begin{center}
  \begin{tikzpicture}[font=\tiny]
    \begin{scope}
        \foreach \x in {0,1,...,7}  
    \draw (\x,0)--(\x+1,0);
    \draw (6,0)--(6,1);    
   \foreach \x in {0,1,...,8}
\node[wbullet] at (\x,0){};  
\node [wbullet] at (6,1) {};  
   \end{scope} 
\end{tikzpicture}
  \end{center}
\item if  $c> \frac{7}{13}$ then $B_{\tilde X}^\ns$ is obtained by blowing up the configuration of curves in (ii), so there is exactly one $(-1)$-curve, say $E$, not intersecting $B_{\tilde X}^\s$, and the dual graph of $B_{\tilde X}^\ns+E$ is 
\begin{center}
  \begin{tikzpicture}[font=\tiny]
        \foreach \x in {0,1,...,8}  
    \draw (\x,0)--(\x+1,0);
    \draw (7,0)--(7,1);    
   \foreach \x in {0,1,...,9}
\node[wbullet] at (\x,0){};  
\node[bbullet, label=below: $E$] at (6,0) {};
\node[label=below: $(-3)$] at (5,0) {};
\node[label=below: $(-3)$] at (7,0) {};
\node[wbullet] at (7,1) {};
\end{tikzpicture}
  \end{center}
where the two white bullets adjacent to $E$ represent $(-3)$-curves and all the other white bullets are \emph{($-2$)}-curves.
\item[(v)] If $c\geq \frac{6}{11}$ then $B=0$, and if $c<\frac{6}{11}$ then $B=cB_0$ where $B_0$ is the image of the $(-3)$-curve to the right of $E$ in the dual graph of (ii).
\end{enumerate}
\end{prop}
\begin{proof}
The statements (ii)-(iv) follow from the proof of Proposition~\ref{prop: 0} and the computation of \eqref{eq: II*} in Section~\ref{sec: can curve}. (v) can be seen by finding the Zariski decomposition on $K_{\tilde X} + B_{\tilde X}^\s +c\lceil B_{\tilde X}^\ns \rceil$.

For (i), consider  the minimal model $\rho\colon (\tilde X, B_{\tilde X})\rightarrow (Z, B_Z')$ of $K_{\tilde X} + B_{\tilde X}$. Then $(Z, B_Z')$ is a smooth projective log canonical surface as in the beginning of this subsection. In particular, there is a big connected curve $C+D$ such that $K_Z\cdot (C+D)=0$. It follows that $\kappa(Z)\leq 0$. Moreover, $\pi_1^{\alg}(S)$ is trivial by Lemma~\ref{lem: pi1}. By the Enriques--Kodaira classification of algebraic surfaces, if $\cha \bbk\neq 2$, then $\tilde X$ is birational to a K3 surface or a rational surface. If $\cha \bbk =2$, then $\tilde X$ can also be a classical/supersingular Enriques surface, which does not admit any nontrivial \'etale cover. 
\end{proof}


\begin{cor}\label{cor: pi1}
Let $\sC\subset(0,1]$ be a subset such that $\sC\subset (0,1]$ attains the minimum, say $c$. Then
\[
\min \KK^2(\sC, 0; 0) \geq 
\begin{cases}
\frac{1}{42}c^2 & \text{ if } c\leq \frac{7}{13} \\
-\frac{11}{6} c^2 +2c -\frac{7}{13} & \text{ if }  \frac{7}{13}<c\leq\frac{6}{11} \\
\frac{1}{143} & \text{ if }  c>\frac{6}{11}
\end{cases}
\]
and equality holds if and only if $\cha\bbk=2$.
 \end{cor}
 \begin{proof}
The inequality follows from Proposition~\ref{prop: 1}. By Example~\ref{ex: enriques} the equality holds if $\cha\bbk=2$. Now suppose that $\cha\bbk\neq 2$ and $(X, B)\in \sS(\sC, 0; 0)$ realizes the equality. Then by Proposition~\ref{prop: 0=} (i) the minimal resolution $(\tilde X, B_{\tilde X})$ of $(X,B)$ is birationally a K3 surface or a rational surface. In both cases, the fact that $\kappa(K_{\tilde X}+B_{\tilde X}^\s )= 0$ implies that $p_g=p_g(K_{\tilde X}, B_{\tilde X}^\s)=1$, which is a contradiction to the assumption that $p_g=0$. In other words, the inequality is strict if $\cha\bbk\neq 2$.
 \end{proof}
 
\subsection{Proofs of the main results}
We conclude this section by giving the proofs of the main results presented in the introduction.
\begin{proof}[Proof of Theorem~\ref{thm: main}]
Combine Lemma~\ref{lem: smaller2} (ii), Theorems~\ref{thm: lower bound}, \ref{thm: 1}, and \ref{thm: 0}.
\end{proof}
\begin{proof}[Proof of Theorem~\ref{thm: main2}]
This is just a reformulation of Theorem~\ref{thm: main}.
\end{proof}
\begin{proof}[Proof of Theorem~\ref{thm: linear lower bound}]
Combine Corollaries~\ref{cor: linear lower bound} and \ref{cor: linear lower bound 2}.
\end{proof}

\section{Applications}\label{sec: app}
In this section, we work over an algebraically closed field $\bbk$ of characteristic 0. We apply the main results to several closely related problems.
\subsection{A Noether type inequality for log canonical threefolds}
Given $c\in (0,1]$ and a positive integer $n$, \cite{JLZ21} proved a Noether type inequality of the form $\vol(K_X+B)\geq a_n(c) p_g(X, B)- b_n(c)$ for any $n$-dimensional projective log canonical pairs $(X, B)$ such that $K_X+B$ is big and $\min(\sC_B\cup\{1\}) = c$, where $ a_n(c)$ and $b_n(c)$ are positive constants depending on $c$. For example, we have the optimal value: $a_1(c) = b_1(c) = 1$. By Corollary~\ref{cor: linear lower bound}, we have the optimal values: $a_2(c) =2c-c^2$ and $b_2(c) = 2c+c^2$. Using this, we can also make $a_3(c)$ and $b_3(c)$ explicit as follows:
\begin{thm}\label{thm: 3 Noether}
Let $(X, B)$ be a projective log canonical threefold such that $K_X+B$ is big. Denote by $\sC_B$ the coefficient set of $B$ and $c=\min(\sC_B\cup\{1\})$. Then
\[
\vol(K_X+B) \geq a_3(c)\cdot p_g(X, B)  - b_3(c)
\]
where $b_3(c) = c$ and $a_3(c)$ is specified as follows:
\[
a_3(c) = 
\begin{cases}
\frac{1}{168}c^2, & \text{ if } c\leq \frac{7}{13} \\
-\frac{11}{24} c^2 +\frac{1}{2}c -\frac{7}{52}, & \text{ if }  \frac{7}{13}<c\leq\frac{6}{11} \\
\frac{1}{572}, & \text{ if }  c>\frac{6}{11}
\end{cases}
\] 
\end{thm}
\begin{proof}
Let $v_2^+(\{c,1\})$ be the minimal volume of projective log canonical surface of general type $(S, B_S)$ with coefficient in $\{c,1\}$ and $p_g(S, B_S)>0$. Then, by Proposition~\ref{prop: 1}, we have
\[
v_2^+(\{c,1\}) = 
\begin{cases}
\frac{1}{42}c^2, & \text{ if } c\leq \frac{7}{13} \\
-\frac{11}{6} c^2 +2c -\frac{7}{13}, & \text{ if }  \frac{7}{13}<c\leq\frac{6}{11} \\
\frac{1}{143}, & \text{ if }  c>\frac{6}{11}.
\end{cases}
\] 
Now, by \cite[Theorem~A.2]{JLZ21} and its proof, we have 
\[
\vol(K_X+B) \geq a_3(c) p_g(X, B)  - b_3(c)
\]
where
\[
a_3(c) = \frac{1}{4} \min\{a_2(c), v_2^+(\{c,1\})\} = \frac{1}{4} v_2^+(\{c,1\}) = 
\begin{cases}
\frac{1}{168}c^2, & \text{ if } c\leq \frac{7}{13} \\
-\frac{11}{24} c^2 +\frac{1}{2}c -\frac{7}{52}, & \text{ if }  \frac{7}{13}<c\leq\frac{6}{11} \\
\frac{1}{572}, & \text{ if }  c>\frac{6}{11}
\end{cases}
\]
and
\[
b_3(c)= \frac{1}{4} \max\{a_2(c)+b_2(c), v_2^+(\{c,1\})\} = \frac{1}{4} (a_2(c)+b_2(c)) = c 
\]
\end{proof}

\subsection{A Noether type inequality for stable surfaces}\label{sec: stable}
A stable surface is a projective surface with semi-log-canonical singularities whose canonical class is ample. They were introduced by \cite{KSB88} to compactify the moduli spaces of canonical surfaces of general type. It is natural and often possible to extend the results for canonical surfaces of general type to stable surfaces. For example, for a stable surface $X$, the inequality $K_X^2>p_g(X)-3$ holds if $X$ is either normal or Gorenstein (see \cite{TZ92, LR16} and Corollary~\ref{cor: linear lower bound}). By Example~\ref{ex: 25/84} given at the end of this subsection, this is not true for general stable surfaces. Instead, we are able to provide a weaker Noether type inequality that holds for all stable surfaces:
\begin{thm}\label{thm: noether}
Let $X$ be a stable surface. Then $K_X^2\geq \frac{1}{143}p_g(X).$
\end{thm}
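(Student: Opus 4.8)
The plan is to reduce the statement to a single inequality for \emph{normal} log surfaces of general type, namely $\vol(K_X+\Delta)\ge \tfrac{1}{143}\,p_g(X,\Delta)$, and then to propagate this across the normalization of a non-normal stable log surface using the additivity relations recorded in Section~\ref{sec: stable}.

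First I would settle the normal case by unwinding the trichotomy in $\kappa(K_X+C)$, where $C\subset\Delta$ is the semistable part. A normal stable log surface is a log canonical model, and passing to its minimal resolution produces a minimal smooth log surface of general type with the same volume and the same geometric genus, so it suffices to prove the bound for such surfaces. If $\kappa(K_X+C)\le 0$, then $h^0(K_X+C)\le 1$ (a moving system $|K_X+C|$ would force $\kappa\ge 1$), so by Lemma~\ref{lem: pg} one has $p_g(X,\Delta)=p_g(X,C)\le 1$; combined with Proposition~\ref{prop: 0} (and the trivial bound when $p_g=0$) this gives $\vol(K_X+\Delta)\ge \tfrac1{143}\ge \tfrac1{143}p_g(X,\Delta)$. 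If $\kappa(K_X+C)=1$ or $2$, then Propositions~\ref{prop: 1} and~\ref{prop: 2} yield $\vol\ge \max\{\tfrac13,\,p_g-3+\tfrac{4}{p_g+1}\}$ and $\vol\ge\max\{1,\,p_g-2\}$ respectively, and a short elementary check shows that each right-hand side is at least $\tfrac1{143}p_g$ for every $p_g\ge 0$: the values $p_g\le 2$ are absorbed by the constants $\tfrac13$ and $1$, while for $p_g\ge 3$ the linear terms dominate since their slope $\tfrac{142}{143}$ exceeds $\tfrac1{143}$.

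Next I would treat a non-normal stable log surface $(X,\Delta)$ via its normalization $\mu\colon\bar X\to X$, with conductor $\bar D$ and strict transform $\bar\Delta$, writing $\bar X=\bigcup_i\bar X_i$ for the decomposition into connected (hence irreducible, normal) components. The volume is additive: by \eqref{eq: nn vol}, $(K_X+\Delta)^2=\sum_i(K_{\bar X_i}+\bar\Delta_i+\bar D_i)^2$. The geometric genus, on the other hand, can only drop under normalization. Indeed, from the exact sequence preceding \eqref{eq: nn pg} one has $p_g(X,\Delta)=h^0(\bar X,K_{\bar X}+\bar\Delta)+\dim_\C\bigl(H^0(K_{\bar X}+\bar\Delta+\bar D)|_{\bar D^\n}\bigr)^{-}$, and the $\tau$-anti-invariant restriction space is a subspace of the image of the restriction map, which is the cokernel of the inclusion $H^0(K_{\bar X}+\bar\Delta)\hookrightarrow H^0(K_{\bar X}+\bar\Delta+\bar D)$; hence $p_g(X,\Delta)\le h^0(\bar X,K_{\bar X}+\bar\Delta+\bar D)=\sum_i p_g(\bar X_i,\bar\Delta_i+\bar D_i)$. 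Each $(\bar X_i,\bar\Delta_i+\bar D_i)$ is a connected normal stable log surface, so the normal case applies component-wise; summing the inequalities $(K_{\bar X_i}+\bar\Delta_i+\bar D_i)^2\ge \tfrac1{143}\,p_g(\bar X_i,\bar\Delta_i+\bar D_i)$ and combining with the two displayed relations produces $(K_X+\Delta)^2\ge \tfrac1{143}\,p_g(X,\Delta)$.

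The main obstacle is the normal case, and specifically the bookkeeping that upgrades the \emph{constant} bounds of Propositions~\ref{prop: 2}, \ref{prop: 1} and~\ref{prop: 0} to the \emph{linear} bound $\tfrac1{143}p_g$. The crux is the observation that the two low-dimension regimes $\kappa(K_X+C)\le 0$ force $p_g\le 1$, so that the sharp minimal volume $\tfrac1{143}$ already suffices there, whereas in the regimes $\kappa(K_X+C)\in\{1,2\}$ the Tsunoda--Zhang-type estimates grow fast enough in $p_g$. Once this is in place the non-normal reduction is essentially formal, resting only on the additivity of the volume and the sub-additivity of $p_g$ under normalization.
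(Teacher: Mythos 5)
Your proposal is correct and follows essentially the same route as the paper: normalize, decompose into irreducible components, apply the normal-surface bound (the Main Theorem together with Propositions~\ref{prop: 2}, \ref{prop: 1}, \ref{prop: 0}) componentwise, and conclude via the additivity of the volume \eqref{eq: nn vol} and the injection \eqref{eq: pg} giving sub-additivity of $p_g$. The only difference is that you spell out explicitly the case analysis on $\kappa(K_X+C)$ that upgrades the constant bounds to the linear bound $\frac{1}{143}p_g$, which the paper leaves implicit in its citation of the Main Theorem.
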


Before giving the proof of Theorem~\ref{thm: noether}, we recall how one computes the canonical volume and the geometric genus of a possibly non-normal stable surface in terms of its normlization. So let $X$ be a non-normal stable surface and $\nu\colon\bar X\rightarrow X$ the normalization. Let $\bar D\subset \bar X$ the conductor divisor, which is a reduced curve on $\bar X$. Then the generically two-to-one map $\bar D\rightarrow D$ induces a double cover $\bar D^\nu\rightarrow D^\nu$ of the normalized curves, which in turn induces an involution $\tau$ on $\bar D^\nu$ such that $D^\nu=\bar D^\nu/\tau$ and the different $\Diff_{\bar D^\nu}(0)$ is $\tau$-invariant. The stable surface $X$ can be viewed as glued from $(\bar X, \bar D)$ along $\bar D$ via the involution $\tau$ on $\bar D^\nu$ (see \cite[ Theorem~5.13]{Kol13}).

The normal surface $\bar X$ is often not connected. We can write 
\[
\bar X=\bigcup_{1\leq i\leq n} \bar X_i
\]
as the (disjoint) union of its irreducible components. Let $\bar D_i$ be the part of $\bar D$ on $\bar X_i$. Then $(\bar X_i, \bar D_i)$ are all (connected) projective log canonical surfaces with ample $K_{\bar X_i} + \bar D_i$, and we have
\begin{equation}\label{eq: nn vol}
K_X^2=(K_{\bar X}+\bar D)^2=\sum_{1\leq i\leq n}(K_{\bar X_i}+\bar D_i)^2.
\end{equation}

The computation of the geometric genus $p_g(X)$ is more subtle. First of all, there is a natural inclusion obtained by pulling back the sections of $\sO_X(K_X)$ restricted to the Gorenstein locus of $(X, B)$ and then extending to global sections of $\sO_X(K_{\bar X}+\bar D)$:
\begin{equation*}
\nu^*: H^0(X, K_X)\hookrightarrow H^0(\bar X, K_{\bar X}+\bar D).
\end{equation*}
In particular, we have 
\begin{equation}\label{eq: pg}
\sum_{1\leq i\leq n} p_g(\bar X_i, \bar D_i) =\dim_\bbk H^0(\bar X, K_{\bar X}+\bar D) \geq \dim_\bbk H^0(X, K_X) = p_g(X).
\end{equation}
The image of the map $\nu^*$ consists of sections whose residue at $\bar D^\nu$ is $\tau$-anti-invariant (\cite[Proposition~5.8]{Kol13}). Let  $\res\colon  H^0(\bar X, K_{\bar X}+\bar D) \rightarrow H^0(\bar D^\nu, \Diff_{\bar D^\nu}(0))$ be the residue map. Then there is a short exact sequence
\[
0\rightarrow H^0(X, K_X)\rightarrow H^0(\bar X, K_{\bar X})\rightarrow \im(\res)^-\rightarrow 0,
\]
where $\im(\res)^-$ denotes $\tau$-anti-invariant part of $\im(\res)$. It follows that
\begin{equation*}
p_g(X)= p_g(\bar X) -\dim_\C \im(\res)^-
\end{equation*}
In particular, if $\im(\res)=0$, that is, if all of the global sections of $K_{\bar X}+\bar D$ vanish along $\bar D$, then we have
\begin{equation}\label{eq: nn pg}
p_g(X)=p_g(\bar X) =\sum_i p_g(\bar X_i).
\end{equation}

Now we can turn to
\begin{proof}[Proof of Theorem~\ref{thm: noether}]
Let $\bar X\rightarrow X$ be the normalization and $\bar D\subset \bar X$ the conductor divisor. Write $\bar X=\cup_{1\leq i\leq n} \bar X_i$ as the union of its irreducible components. Applying Theorem~\ref{thm: main} with $\sC = \{1\}$, we obtain for each $1\leq i\leq n$ 
\[(K_{\bar X_i} +\bar D_i)^2\geq \frac{1}{143} p_g(\bar X_i, \bar D_i)\] 
and it follows from \eqref{eq: nn vol} and  \eqref{eq: pg} that 
\[K_X^2=\sum_{1\leq i\leq n} (K_{\bar X_i} + \bar D_i)^2\geq \sum_{1\leq i\leq n} \frac{1}{143} p_g(\bar X_i, \bar D_i) \geq \frac{1}{143} p_g(X).\]
\end{proof}
\begin{rmk}\label{rmk: not opt}
\begin{enumerate}[leftmargin=*]
\item If the equality in Theorem~\ref{thm: noether} holds, then we see from the proof that $(K_{\bar X_i} +\bar D_i)^2=\frac{1}{143} p_g(\bar X_i, \bar D_i)$ for each $1\leq i\leq n$, so the conductor divisors $\bar D_i$ are actually empty by Proposition~\ref{prop: 0=} (v). In other words, $X=\bar X = \bar X_1$, which is normal, and $p_g(X)=1$. 
\item Using the same proof, one can show that $(K_X+B)^2\geq \frac{1}{143}p_g(X, B)$ for any projective semi-log-canonical surface $(X, B)$ with reduced $B$ and ample $K_X+B$. 

\end{enumerate}
\end{rmk}

Finally we construct a series of stable surfaces with $K_X^2=\frac{25}{84}p_g(X)$ where $p_g(X)$ can take any positive integer.
\begin{ex}\label{ex: 25/84}
We start with a configuration of a nodal cubic curve $C$ and three lines $L_1, L_2, L_3$ on $\PP^2$ satisfying the following conditions (see Figure~\ref{fig: cubic+3line}):
\begin{itemize}[leftmargin=*]
\item each pair of the curves in $\{L_1, L_2, L_3, C\}$ intersect transversally, and
\item there is exactly one triple point on $L_1+L_2+L_3+C$, which is $\{g_0\} = L_1\cap\, L_2\cap\, C$.
\end{itemize} 
\begin{figure}[h]
\begin{tikzpicture}[inner sep = 0, font=\tiny, scale=1.5]
\node (O) at (0,0) {};
\node (X) at (1, 0) {};
\node[label={[label distance = 1mm]above: $g_0$}] (g0) at ($(O) ! 1 ! 90: (X)$){$\bullet$};
\node[label={-15:$e_3$}] (e3) at ($(O) ! 1 ! 210: (X)$){$\bullet$};
\node[label={-100:$f_3$}] (f3) at ($(O) ! 1 ! -30: (X)$){$\bullet$};
\draw (g0) -- ($ (g0) ! 1.5 ! (e3) $) coordinate[label = below: $L_1$](L1);
\draw (e3) -- ($ (e3) ! 1.5 ! (g0) $) coordinate (L1');
\draw (g0) -- ($ (g0) ! 1.5 ! (f3) $) coordinate[label = below: $L_2$](L2);
\draw (f3) -- ($ (f3) ! 1.5 ! (g0) $) coordinate (L2');
\draw (e3) -- ($ (e3) ! 1.5 ! (f3) $) coordinate[label = right: $L_3$] (L3);
\draw (f3) -- ($ (f3) ! 1.5 ! (e3) $) coordinate  (L3');
\node[label={45: $C$}] (C) at (0.5, 0.8) {};
\node[label={-100: $f_{1}$}] (f1) at (30: 0.5){$\bullet$};
\node[label={-30: $e_1$}] (e1) at (150: 0.5){$\bullet$};
\node[label={150: $g_1$}] (g1) at (-1.5, -0.5){$\bullet$};
\node[label={-30: $e_2$}] (e2) at ($ (g0) ! 1.3 ! (e3) $) {$\bullet$};
\node[label={-100: $f_2$}] (f2) at ($ (f3) ! 0.2! 0: (g0) $) {$\bullet$};
\node[label={90: $g_2$}] (g2) at (0, -0.5) {$\bullet$};
\node[label={90: $g_3$}] (g3) at ($(e3)!1.3!(f3)$) {$\bullet$};
\draw plot [smooth]coordinates{ (-.5, 1) (0,1) (C) (f1)(e1)(-1, 0)  (g1)(-1.5, -.9)(e2) (g2)  (f2) (g3) (1.8, -.7)};
\end{tikzpicture}
\caption{Three lines $L_1, L_2, L_3$ and one cubic curve $C$ on $\PP^2$}
\label{fig: cubic+3line}
\end{figure}
Let $\rho\colon \tilde X\rightarrow \PP^2$ be a composition of blow-ups at the singularities of $L_1+L_2+L_3+C$ as well as their infinitely near points over $L_3$ and $C$, such that $\rho^{-1}(L_1+L_2+L_3+C)$ is a simple normal crossing curve with dual graph as in Figure~\ref{fig: bl cubic+3line}:
\begin{figure}[h]
\begin{tikzpicture}[inner sep = 0, font=\tiny]
\node[inner sep=0pt, label = left:$\tilde L_1(-3)$](L1) at (-4, 0){$\otimes$};
\node[inner sep=0pt, label=right:$\tilde L_2(-3)$](L2) at (4, 0){$\otimes$};
\node[wbullet, label=above: $\tilde L_3(-16)$] (L3) at (0,4){};
\node [inner sep=0pt, label={below right:$\tilde C(-5)$}](C) at (0, 0){$\otimes$};
\node[bbullet, label=above: $F_2$] (F2) at (1,0){};
\node[bbullet, label=above: $E_2$] (E2) at (-1,0){};
\node[bbullet, label=left: $G_1$] (G1) at (-.8, 2){};
\node[bbullet, label=right: $G_2$] (G2) at (0, 2){};
\node[bbullet, label=right: $G_3$] (G3) at (.8, 2){};
\node[bbullet, label=above: $E_1$] (E1) at (-1,1){};
\node[bbullet, label=above: $F_1$] (F1) at (1,1){};
\node[bbullet, label=below: $G_0$] (G0) at (0,-1){};
\node[bbullet, label=above left: $E_3$] (E3) at (-0.5, 3.5){};
\node[bbullet, label=above right: $F_3$] (F3) at (0.5, 3.5){};
\node[wbullet] (L1C4) at (-2,1){};
\foreach \x in {1,2}
\node[wbullet] (L1C\x) at (\x-4,0){};
\foreach \x in {1,2}
\node[wbullet] (L2C\x) at (4-\x,0){};
\node[wbullet] (L2C4) at (2,1){};
\node[wbullet] (L131) at (-3.5, 4-3.5){};
\node[wbullet] (L132) at (-3, 4-3){};
\node[wbullet] (L133) at (-2.5, 4-2.5){};
\node[wbullet] (L134) at (-2, 4-2){};
\node[wbullet] (L135) at (-1.5, 4-1.5){};
\node[wbullet] (L136) at (-1, 4-1){};
\node[wbullet] (L231) at (3.5, 4-3.5){};
\node[wbullet] (L232) at (3, 4-3){};
\node[wbullet] (L233) at (2.5, 4-2.5){};
\node[wbullet] (L234) at (2, 4-2){};
\node[wbullet] (L235) at (1.5, 4-1.5){};
\node[wbullet] (L236) at (1, 4-1){};
\draw (L2)--(L2C1)--(L2C2)--(F2)--(C)--(E2)--(L1C2)--(L1C1) --(L1)--(L131)--(L132)--(L133)--(L134)--(L135)--(L136)--(E3)--(L3)--(F3)--(L236)--(L235)--(L234)--(L233)--(L232)--(L231)--(L2);
\draw (L2)--(L2C4)--(F1)--(C);
\draw (L1)--(L1C4)--(E1)--(C);
\foreach \x in {1,2,3}
\draw (L3) -- (G\x) -- (C);
\draw (G0)--(C);
\draw (L1) ..controls (-3,-1)..(0,-1)..controls (3,-1)..(L2);
\end{tikzpicture}
\caption{Dual graph of $\rho^{-1}(C+L_1+L_2+L_3)\subset\tilde X$}
\label{fig: bl cubic+3line}
\end{figure}
\begin{itemize}[leftmargin=*]
\item the curves $\tilde L_i\, (1\leq i\leq 3)$ and $ \tilde C$ are the strict transforms of $L_i$ and $C$ respectively,
\item the white bullets without labels denote $(-2)$-curves,
\item the black bullets  $E_i, F_j\,(1\leq i,j\leq 3)$ and $G_k\,(0\leq k\leq 3)$ denote the $\rho$-exceptional $(-1)$-curves over $e_i, f_j, g_k$ respectively.
\end{itemize}  
Note that the $\tilde L_i$ are smooth rational curves with $\tilde L_1^2 = \tilde L_2^2=-3,  \tilde L_3^2=-16$, and $\tilde C$ is a smooth elliptic curve with $\tilde C^2=-5$.

Let $\tilde B\subset \tilde X$ be the reduced subcurve of $\rho^{-1}(C+L_1+L_2+L_3)$ consisting of the components that are not $(-1)$-curves (corresponding to the black bullets in the dual graph). Then the semistable part is $\tilde B^\s=\tilde C$ and, as before, we set $\tilde B^\ns:=\tilde B-\tilde B^\s$. Since $K_{\PP^2}+ C\sim 0$, $K_{\tilde X}+{\tilde B}\sim K_{\tilde X}+{\tilde B}-\rho^*(K_{\PP^2}+ C)$ is linearly equivalent to an effective divisor with the same support as $\tilde B^\ns+E_3+F_3$, which is big.  

Let $\pi\colon(\tilde X, \tilde B) \rightarrow (\bar X, \bar B)$ be the ample model. Then $\pi$ contracts $\tilde B-\tilde L_1-\tilde L_2$. We denote $\bar L_i = \pi_*\tilde L_i$ for $i=1,2$, $\bar E_i = \pi_* E_i$ and $\bar F_j=\pi_* F_j$ for $1\leq i,j\leq 3$, and $\bar G_k = \pi_* G_k$ for $0\leq k\leq 3$. Then we have $\bar B=\bar L_1+\bar L_2$, and $\bar L_1\cong \bar L_2\cong \PP^1$; see Figure~\ref{fig: visible on X} for a sketch of the visible curves on $\bar X$, where the bullets denotes the singularities on $\bar X$; $\bar e_1, \bar f_1$ are quotient singularities of type $\frac{1}{2}(1,1)$, $\bar e_2, \bar f_2$ of type $\frac{1}{3}(1,2)$, $\bar e_3, \bar f_3$ of type $\frac{1}{7}(1,6)$, $\bar l_3:=\pi(\tilde L_3)$ of type $\frac{1}{16}(1,1)$, and finally $\bar c=\pi(\tilde C)$ is a simple elliptic singularity of $\bar X$. Note that $\{\bar g_{0i}\}:=\bar G_0\cap \bar L_i$, $i=1,2$, are smooth points of $\bar X$.
\begin{figure}[h]
\begin{tikzpicture}[font=\tiny, scale = 1.5, inner sep =0pt]
\node[label={[label distance = 2pt]below:$\bar c$}] (c) at (0,0){$\bullet$};
\node[fill=white, label={[label distance = 3pt]above:$\bar l_3$}] (l3) at (0,1){$\bullet$};
\node[label=left:$\bar G_1$] (G1) at (-.3, .5){};
\node[label=right:$\bar G_3$]  (G3) at (.3, .5){};
\node[label=15: $\bar e_3$] (e3) at (-1.5, 1.5){$\bullet$};
\node[label=right:$\bar e_1$] (e1) at (-1.5, 0.8){$\bullet$};
\node[label=10:$\bar e_2$] (e2) at (-1.5, 0.4){$\bullet$};
\node[label=-15:$\bar g_{01}$]  (g01) at (-1.5, -.5){};
\node[label=170:$\bar f_3$] (f3) at (1.5, 1.5){$\bullet$};
\node[label=170: $\bar f_1$] (f1) at (1.5, 0.8){$\bullet$};
\node[label=165: $\bar f_2$] (f2) at (1.5, 0.4){$\bullet$};
\node[label=190:$\bar g_{02}$] (g02) at (1.5, -.5){};
\draw  ($ (e3) ! -.2 ! (l3) $)node[label=left:$\bar E_3$] (E3) {}  --($ (e3) ! 1.2 ! (l3) $);
\draw ($ (f3) ! -.2 ! (l3) $)node[label= right:$\bar F_3$] (F3) {} --  ($ (f3) ! 1.2 ! (l3) $);
\draw ($ (e1) ! -.2 ! (c) $)node[label=left:$\bar E_1$] (E1) {}  --  ($ (e1) ! 1.2 ! (c) $);
\draw ($ (e2) ! -.2 ! (c) $) node[label=left:$\bar E_2$] (E2) {} --  ($ (e2) ! 1.2 ! (c) $);
\draw ($ (f1) ! -.2 ! (c) $)node[label=right:$\bar F_1$] (F1) {}   --  ($ (f1) ! 1.2 ! (c) $);
\draw ($ (f2) ! -.2 ! (c) $)node[label=right:$\bar F_2$] (F2) {}   --  ($ (f2) ! 1.2 ! (c) $);
\node[label=left:$\bar G_0$] (G01) at ($ (g01) ! -.2 ! (c) $) {};
\node[label=right:$\bar G_0$] (G02) at ($ (c) ! 1.2 ! (g02) $) {};
\draw  plot [smooth] coordinates {($ (g01) ! -.2 ! (c) $) (g01)(c) (g02)($ (c) ! 1.2 ! (g02) $)};
\draw ($(e3) !-0.2! (g01)$) --  ($(e3) !1.2!(g01)$)node[below]{$\bar L_1$}
 ($(f3) !-0.2! (g02)$) --  ($(f3) !1.2! (g02)$)node[below]{$\bar L_2$};
\draw($(c)!-0.05!(l3)$)--node[fill=white]{$\bar G_2$}($(c)!1.1!(l3)$);
\draw plot [smooth] coordinates {($(c)!-0.1!(G1)$) (G1) (l3)($(G1)!1.1!(l3)$)};
\draw plot [smooth] coordinates {($(c)!-0.1!(G3)$) (G3) (l3)($(G3)!1.1!(l3)$)};
\end{tikzpicture}
\caption{Visible curves on $\bar X$}
\label{fig: visible on X}
\end{figure}

The positive part of $K_{\tilde X}+{\tilde B}$ is 
\[
\pi^*(K_{\bar X}+\bar B) = K_{\tilde X}+\tilde C+ \sum b_j \tilde B_j + \tilde L_1+\tilde L_2 + \frac{7}{8}\tilde L_3,
\]
where the $\tilde B_j$ are the visible $(-2)$-curves and  $b_j\in (0,1)$ are appropriate coefficients. The volume of $K_X+B$ is then
\[
\vol(K_X+B) = \pi^*(K_{\bar X}+\bar B)^2 = \pi^*(K_{\bar X}+\bar B)(7E_3+7F_3 + \tilde L_1+\tilde L_2) = \frac{25}{84}.
\]
One sees easily that $p_g(\bar X, \bar B)=h^0(\tilde X, K_{\tilde X}+\tilde C)=1$. The different along $\bar B=\bar L_1+\bar L_2$ is
\[
\Diff_{\bar B}(0) = \frac{1}{2}(\bar e_1 + \bar f_1) +\frac{2}{3}(\bar e_2+ \bar f_2) + \frac{6}{7}(\bar e_3 +\bar f_3),
\]  
and it is clear that $H^0(\bar B, \Diff_{\bar B}(0))  = 0$.

We take $n$ copies $(\bar X_s, \bar B_s = \bar L_{s1} + \bar L_{s2})$ of $(\bar X,\bar B=\bar L_1+\bar L_2)$ and glue them in a cycle along the $\bar B_s$, to obtain a stable surface $X^{(n)}$ (see Figure~\ref{fig: stable}): The boundary component $\bar L_{s2}$ of $(\bar X_s,\bar B_s)$ is identified with the boundary component $\bar L_{s+1,1}$ of $(\bar X_{s+1},\bar B_{s+1})$, so that the differents $\Diff_{\bar L_{s,2}}(0)$ and $\Diff_{\bar L_{s+1,1}}(0)$ match,  where $1\leq s\leq n$ is taken modulo $n$:
\begin{figure}[h]
\begin{tikzpicture}[font=\tiny, inner sep =0pt]
\begin{scope}
\node(c) at (0,0){$\bullet$};
\node (l3) at (0,1){$\bullet$};
\node (G1) at (-.3, .5){};
\node (G3) at (.3, .5){};
\node[label=190: $r$] (e3) at (-1.5, 1.5){$\bullet$};
\node[label=190:$p$] (e1) at (-1.5, 0.8){$\bullet$};
\node[label=190:$q$] (e2) at (-1.5, 0.4){$\bullet$};
\node[label=170: $u$] (g01) at (-1.5, -.5){};
\node (f3) at (1.5, 1.5){$\bullet$};
\node (f1) at (1.5, 0.8){$\bullet$};
\node (f2) at (1.5, 0.4){$\bullet$};
\node (g02) at (1.5, -.5){};
\draw  ($ (e3) ! -.2 ! (l3) $) --($ (e3) ! 1.2 ! (l3) $);
\draw ($ (f3) ! -.2 ! (l3) $) --  ($ (f3) ! 1.2 ! (l3) $);
\draw ($ (e1) ! -.2 ! (c) $) --  ($ (e1) ! 1.2 ! (c) $);
\draw ($ (e2) ! -.2 ! (c) $) --  ($ (e2) ! 1.2 ! (c) $);
\draw ($ (f1) ! -.2 ! (c) $) --  ($ (f1) ! 1.2 ! (c) $);
\draw ($ (f2) ! -.2 ! (c) $)  --  ($ (f2) ! 1.2 ! (c) $);
\node (G01) at ($ (g01) ! -.2 ! (c) $) {};
\node (G02) at ($ (c) ! 1.2 ! (g02) $) {};
\draw  plot [smooth] coordinates {($ (g01) ! -.2 ! (c) $) (g01)(c) (g02)($ (c) ! 1.2 ! (g02) $)};
\draw ($(e3) !-0.2! (g01)$) --  ($(e3) !1.2!(g01)$)
 ($(f3) !-0.2! (g02)$) --  ($(f3) !1.2! (g02)$);
\draw($(c)!-0.05!(l3)$)--($(c)!1.1!(l3)$);
\draw plot [smooth] coordinates {($(c)!-0.1!(G1)$) (G1) (l3)($(G1)!1.1!(l3)$)};
\draw plot [smooth] coordinates {($(c)!-0.1!(G3)$) (G3) (l3)($(G3)!1.1!(l3)$)};
\end{scope}
\begin{scope}[xshift=3cm]
\node(c) at (0,0){$\bullet$};
\node (l3) at (0,1){$\bullet$};
\node (G1) at (-.3, .5){};
\node (G3) at (.3, .5){};
\node(e3) at (-1.5, 1.5){$\bullet$};
\node (e1) at (-1.5, 0.8){$\bullet$};
\node (e2) at (-1.5, 0.4){$\bullet$};
\node (g01) at (-1.5, -.5){};
\node (f3) at (1.5, 1.5){$\bullet$};
\node (f1) at (1.5, 0.8){$\bullet$};
\node (f2) at (1.5, 0.4){$\bullet$};
\node (g02) at (1.5, -.5){};
\draw  ($ (e3) ! -.2 ! (l3) $) --($ (e3) ! 1.2 ! (l3) $);
\draw ($ (f3) ! -.2 ! (l3) $) --  ($ (f3) ! 1.2 ! (l3) $);
\draw ($ (e1) ! -.2 ! (c) $) --  ($ (e1) ! 1.2 ! (c) $);
\draw ($ (e2) ! -.2 ! (c) $) --  ($ (e2) ! 1.2 ! (c) $);
\draw ($ (f1) ! -.2 ! (c) $) --  ($ (f1) ! 1.2 ! (c) $);
\draw ($ (f2) ! -.2 ! (c) $)  --  ($ (f2) ! 1.2 ! (c) $);
\node (G01) at ($ (g01) ! -.2 ! (c) $) {};
\node (G02) at ($ (c) ! 1.2 ! (g02) $) {};
\draw  plot [smooth] coordinates {($ (g01) ! -.2 ! (c) $) (g01)(c) (g02)($ (c) ! 1.2 ! (g02) $)};
\draw ($(e3) !-0.2! (g01)$) --  ($(e3) !1.2!(g01)$)
 ($(f3) !-0.2! (g02)$) --  ($(f3) !1.2! (g02)$);
\draw($(c)!-0.05!(l3)$)--($(c)!1.1!(l3)$);
\draw plot [smooth] coordinates {($(c)!-0.1!(G1)$) (G1) (l3)($(G1)!1.1!(l3)$)};
\draw plot [smooth] coordinates {($(c)!-0.1!(G3)$) (G3) (l3)($(G3)!1.1!(l3)$)};
\end{scope}
\begin{scope}[xshift=5.5cm]
\draw[dotted] (0,0.5)--(1,0.5);
\end{scope}
\begin{scope}[xshift=9cm]
\node(c) at (0,0){$\bullet$};
\node (l3) at (0,1){$\bullet$};
\node (G1) at (-.3, .5){};
\node (G3) at (.3, .5){};
\node(e3) at (-1.5, 1.5){$\bullet$};
\node (e1) at (-1.5, 0.8){$\bullet$};
\node (e2) at (-1.5, 0.4){$\bullet$};
\node (g01) at (-1.5, -.5){};
\node[label=-10:$r$] (f3) at (1.5, 1.5){$\bullet$};
\node[label=-10: $p$] (f1) at (1.5, 0.8){$\bullet$};
\node[label=-10: $q$] (f2) at (1.5, 0.4){$\bullet$};
\node [label=10: $u$] (g02) at (1.5, -.5){};
\draw  ($ (e3) ! -.2 ! (l3) $) --($ (e3) ! 1.2 ! (l3) $);
\draw ($ (f3) ! -.2 ! (l3) $) --  ($ (f3) ! 1.2 ! (l3) $);
\draw ($ (e1) ! -.2 ! (c) $) --  ($ (e1) ! 1.2 ! (c) $);
\draw ($ (e2) ! -.2 ! (c) $) --  ($ (e2) ! 1.2 ! (c) $);
\draw ($ (f1) ! -.2 ! (c) $) --  ($ (f1) ! 1.2 ! (c) $);
\draw ($ (f2) ! -.2 ! (c) $)  --  ($ (f2) ! 1.2 ! (c) $);
\node (G01) at ($ (g01) ! -.2 ! (c) $) {};
\node (G02) at ($ (c) ! 1.2 ! (g02) $) {};
\draw  plot [smooth] coordinates {($ (g01) ! -.2 ! (c) $) (g01)(c) (g02)($ (c) ! 1.2 ! (g02) $)};
\draw ($(e3) !-0.2! (g01)$) --  ($(e3) !1.2!(g01)$)
 ($(f3) !-0.2! (g02)$) --  ($(f3) !1.2! (g02)$);
\draw($(c)!-0.05!(l3)$)--($(c)!1.1!(l3)$);
\draw plot [smooth] coordinates {($(c)!-0.1!(G1)$) (G1) (l3)($(G1)!1.1!(l3)$)};
\draw plot [smooth] coordinates {($(c)!-0.1!(G3)$) (G3) (l3)($(G3)!1.1!(l3)$)};
\end{scope}
\end{tikzpicture}
\caption{The stable surface $X^{(n)}$ glued from $n$ copies of $(\bar X, \bar B)$}
\label{fig: stable}
\end{figure}
By \eqref{eq: nn vol} and \eqref{eq: nn pg} the volume and the geometric genus of $X^{(n)}$ are as follows:
\[
K_{X^{(n)}}^2=n(K_{\bar X}+\bar B)^2=\frac{25}{84}n,\,\, p_g(X^{(n)})=n.
\]
If $n\geq 5$ then these stable surfaces violate the inequality $K_{X^{(n)}}^2> p_g(X^{(n)})-3$ which was suggested as a working hypothesis in \cite{LR16}. 

\begin{rmk}
$X^{(1)}$ is obtained from $(\bar X, \bar B)$ by glueing $\bar L_1$ and $\bar L_2$, and there is an \'etale cyclic covering map $X^{(n)}\rightarrow X^{(1)}$ of degree $n$. In fact, if $\bbk=\CC$ then the fundamental group is $\pi_1(X^{(n)})\cong \ZZ$ for each $n\geq 1$.
\end{rmk}
\end{ex}

\subsection{Bounding symplectic automorphisms of surfaces of general type}
\begin{thm}\label{thm: SymAut}
Let $S$ be a smooth projective surface of general type over an algebraically closed field $\bbk$ of characteristic $0$. Let $\Aut_s(S)$ denote the group of symplectic automorphisms of $S$, i.e., those automorphisms inducing the trivial action on $H^0(S, K_S)$. If $p_g(S)\geq 34$, then $|\Aut_s(S)|\leq 12$. 
\end{thm}
\begin{proof}
Let $G=\Aut_s(S)$ and $\pi\colon S\rightarrow X=S/G$ the quotient map. Then there is an effective divisor $B$ with coefficients in $\sC_2$ such that $(X, B)$ is klt and $K_S=\pi^*(K_X+B)$. It follows that $\vol(K_S) = |G|\vol(K_X+B)$. Since $G$ induces the trivial action on $H^0(S, K_S)$, we have $p_g(X, B)=p_g(S)$. By Proposition~\ref{prop: 1} for the coefficient set $\sC_2$, we have $\vol(K_X+B)\geq \frac{3}{4}p_g-\frac{5}{4}$. Therefore,
\[
\vol(K_S) =  |G|\vol(K_X+B) \geq |G|\left(\frac{3}{4}p_g-\frac{5}{4}\right).
\]
Combining the Bogomolov--Miyaoka--Yau inequality $\vol(K_S)\leq 9\chi(\sO_S)\leq 9(p_g+1)$ (see \cite[Theorem~5.1]{Lan16}), we obtain 
\[
|G| \left(\frac{3}{4}p_g-\frac{5}{4}\right) \leq 9\left(p_g+1\right)
\]
and it follows that 
\[
|G|\leq \frac{36(p_g+1)}{3p_g-5}=12+\frac{96}{3p_g-5}.
\]
Now one sees easily that $|G|\leq 12$ if $p_g\geq 34$.
\end{proof}
\begin{rmk}
\begin{enumerate}[leftmargin=*]
\item In Du's thesis \cite{Du22}, it was shown that $|\Aut_s(S)|\leq 417$ if $\chi(\sO_S) \geq 21$. 
\item Since surfaces of general type with bounded $p_g(S)$ or $\chi(\sO_S)$ form a bounded family,  Theorem~\ref{thm: SymAut} (and also \cite{Du22}) implies that there is a universal constant $N$ such that $|\Aut_s(S)|\leq N$ holds for any smooth projective surface of general type.
\end{enumerate}
\end{rmk}

\end{document}